\newtheorem{theorem}{Theorem}[section]
\newtheorem{lemma}[theorem]{Lemma}
\newtheorem{proposition}[theorem]{Proposition}
\newtheorem{corollary}[theorem]{Corollary}
\newtheorem{remark}[theorem]{Remark}
\newtheorem{definition}[theorem]{Definition}
\theoremstyle{definition}
\theoremstyle{remark}
\newtheorem{rmk}[theorem]{Remark}
\numberwithin{equation}{section}
\newcommand{\N}{\mathbb N}
\newcommand{\R}{\mathbb R}
\newcommand{\norm}[1]{\left\lVert#1\right\rVert}
\newcommand{\abs}[1]{\left\lvert#1\right\rvert}
\newcommand{\ps}[2]{\left\langle#1,#2\right\rangle}
\newcommand{\Rn}{\mathbb{R}^n}
\newcommand{\Sn}{\mathbb{S}^{n-1}}
\newcommand{\sm}{\smallsetminus}
\newcommand{\mr}{\mathbin{\vrule height 1.6ex depth 0pt width 0.13ex\vrule height 0.13ex depth 0pt width 1.3ex}}
\title{\textbf{Heat content asymptotics for sets with positive reach}}
\author[$\dagger$]{Paolo De Fazio}
\author[$\star$]{Michele Miranda Jr.}
\date{}
\affil[$\dagger$]{\small{Dipartimento di Scienza e Alta Tecnologia, Universit\`a degli studi dell'Insubria,
Via Valleggio, 11, I-22100 Como,
 email-address: paolo.defazio@uninsubria.it}}
\affil[$\star$]{\small{corresponding author, Dipartimento di Matematica e Informatica, Universit\`a degli studi di Ferrara,
via Nicol\`o Machiavelli 30, I-44121 Ferrara, 
email-address: michele.miranda@unife.it}}
\providecommand{\keywords}[1]{{\textit{Keywords}:} #1}
\providecommand{\subjclass}[1]{{\textit{2020 Mathematics Subject Classification}:} #1}
\begin{document}

\maketitle

\begin{center}
\textit{In memory of Umberto Massari.}
\end{center}

\begin{abstract}
In this paper we study the heat content for sets with 
positive reach.
In details, we investigate the asymptotic behavior of the 
heat content 
of bounded subsets of the Euclidean space with positive 
reach. 
The concept of positive reach was introduced by 
Federer in \cite{fed_1959} and widely developed in the 
following years (see for instance
the recent book by Rataj and Zh{\"a}le 
\cite{rat_zah_2019}). 
It extends the class of sets with 
smooth boundaries to include certain non-smooth and 
singular sets while 
still admitting a well-defined normal geometry. For such 
sets $E\subseteq\Rn$, we analyze the short-time 
asymptotics of the heat 
content $\|T_t\mathbbm{1}_E\|_2$, where 
$T_t\mathbbm{1}_E$ is 
the soluzion of the heat 
equation in $\Rn$ with initial condition $\mathbbm{1}_E$. 
The present paper is in the spirit of 
Angiuli, Massari and Miranda Jr.\cite{ang_mas_mir_2013}, 
but the technique's used here are completely different
and also the final result is slightly different.
\noindent
\\\phantom{a}\\
\keywords{Heat content, sets with positive reach, Curvature measures.}\\
\subjclass{28A33, 49Q15.}
\end{abstract}

\section{Introduction}

Characterizing the class of sets with finite perimeter is 
a topic widely 
studied in Geometric Measure Theory. 
Working in this direction, Ledoux \cite{led_1994}
introduced the functional 
\begin{align}
K_t(E,F)&=\int_F T_t(\mathbbm{1}_E)(x)\,dx,\ \ t\geq0,
\end{align}
where $E,F$ are Borel set of $\Rn$, $\mathbbm{1}_E$ the 
characteristic function of the set $E$ and $T_t$ is the 
Heat semigroup 
in $\Rn$ that is 
given by
\begin{align}
    &T_0f(x)=f(x),\ \ x\in\R^n,\\
    &T_t f(x)=
    \frac{1}{(4\pi t)^{\frac{n}{2}}}
    \int_E f(y)\,e^{-\frac{\norm{x-y}^2}{4t}}\,dy,\ \ t>0,\ x\in\Rn,
    \end{align}
where $f$ is a bounded Borel function.
He proved that 
\begin{align}
\lim_{t\rightarrow 0^+}\sqrt{\frac{\pi}{t}} 
K_t(B,B^c)=P(B)=
\mathcal{H}^{n-1}(\partial B), 
\end{align}
with $P(B)=\mathcal{H}^{n-1}(\partial B)$ being 
the perimeter, i.e. the $(n-1)$-dimensional surface
measre of $\partial B$ of any ball $B$ of 
$\R^n$,  and $B^c=\R^n\sm B$. Moreover, for any set 
$E\subseteq\Rn$ with regular boundary, he proved that
\[
\sqrt{\frac{\pi}{t}} K_t(E,E^c)\leq {\mathcal H}^{n-1}
(\partial E),\ \ t>0,
\]
where $E^c=\R^n\sm E$.
For all $t\geq 0$ we set
\begin{align}
&\norm{T_t\mathbbm{1}_E}_{L^2(\R^n)}=\int_E
T_{2t}\mathbbm{1}_E(x)\,dx=\mathcal{L}^n(E)-K_{2t}
(E,E^c),\label{norma2}\\
&\norm{T_t\mathbbm{1}_E-\mathbbm{1}_E}_{L^1(\R^n)}=2K_{t}
(E,E^c),\label{norma1}
\end{align}
where $\mathcal{L}^n(E)$ is the $n$-dimensional Lebesgue 
measure of $E$.
We refer to \cite[Remark 3.5]{mir_pal_par_pre_2007} for a 
proof of \eqref{norma1}.

From these considerations, Ledoux deduced an equivalent 
proof of the isoperimetric property of balls. 
More recently in \cite{mir_pal_par_pre_2007}, the authors 
showed that 
a set $E\subseteq\R^n$ has finite perimeter if 
and only if 
\begin{align}
\liminf_{t\rightarrow 0^+}\frac{K_t(E,E^c)}{\sqrt{t}}
<+\infty,
\end{align}
and in this case 
\begin{align}
P(E)=\lim_{t\rightarrow 0^+}
\sqrt{\frac{\pi}{t}}K_t(E,E^c).
\end{align}
We refer also to \cite{ang_mir_pal_par_2009} for further 
development
involving more general operators in divergence form on 
domains with Dirichlet boundary conditions, rather 
then the Laplace operator on $\Rn$ and to 
\cite{mar_mir_sha_2016} for the generalization in 
metric spaces. We also point out that the relation 
between perimeter and heat semigroup was estabilished
by De~Giorgi \cite{deg_1953} (and subsequently works 
\cite{deg_1954,deg_1955}) where the definition of sets 
with finite perimeter and functions with bounded variation
in $\Rn$ was introduced.

The object of our investigation is the Taylor 
expansion in time of the function
\begin{equation}
      f_E(t)=\int_{E^c}T_{t^2}(\varphi\mathbbm{1}_E)(x)\,dx=
      \frac{1}{(4\pi)^{\frac{n}{2}}t^n}\int_{E^c}\int_E 
      e^{-\frac{\norm{x-y}^2}{4t^2}}\varphi(y)\,dy\,dx,\ \ \ \varphi\in C^1_b(\Rn),\ t>0,
\end{equation}
that is called \textit{heat content of $E$}.
Of course $f_E(0)=0$ for 
all $\varphi\in C^1_b(\R^n)$ and $f_E(t)$ is equal to 
$K_{t^2}(E,E^c)$ if we choose $\varphi\equiv 1$. 

The main achievement of this paper is to compute a second 
order Taylor expansion of $f_E(t)$ assuming that $E$ is 
a compact with positive reach (see Theorem 
\ref{teorema_sviluppo_ps}). As a Taylor expansion of 
the first order of $K_{t^2}(E,E^c)$ characterizes the 
class of sets of finite perimeter, our result is one of 
first steps to characterize a class of non-smooth and 
maybe singular sets with finite perimeter and a weak (but 
good enough) notion of curvatures. 
In \cite{ang_mas_mir_2013} the authors proved a Taylor 
expansion of third order of the mapping $K_{t^2}(E,E^c)$ 
assuming that $E$ has a $C^{1,1}$-skeleton. 
In the smooth case, similar problems has been deeply 
studied for instance in 
\cite{sch_2021,van_2013,van_1989,gil_van_2012,
gil_van_2015bis,gil_kan_van_2013,gil_van_1999,
gil_van_2004,van_2018,van_gil_see_2008,van_git_2015,
van_leg_1994}, where
the semigroups considered are the Dirichlet or Neumann 
semigroup on a suitable domain or the semigroup 
associated to the Laplace-Beltrami operator on a smooth 
manifold. 
Without assuming any regularity, we have some 
relevant results using the Dirichlet heat semigroup 
in \cite{van_1998} for $n=2$ and in \cite{van_1994} some 
lower and upper bounds are proven in any dimension. 
We stress that assuming $E$ to have positive reach
is a regularity assumption on the boundary of $E$ that,
at best of our knowledge, it is the first result in this
direction in $\R^n$ with $n>2$ (see Theorem \ref{teorema_sviluppo_ps}).
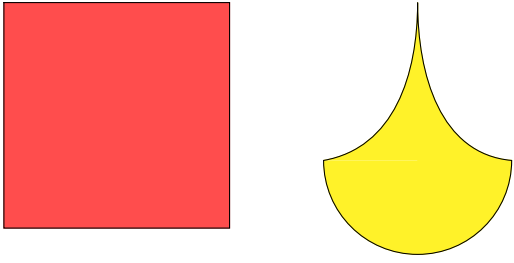
\begin{figure}[htbp]
  \centering
  \mbox{%
    \begin{minipage}[l]{.5\textwidth}
      \begin{tikzpicture}[scale=1,cap=round]
 \tikzset{axes/.style={}}
\begin{scope}[style=axes]
\filldraw[yellow! 90! white ](0.25,0.4) to [out=10,in=-90]  (1.5,2.5)
 to [out=-90, in=175] (2.75,.4);
 \filldraw[yellow! 90! white ] (0.25,0.4) arc[start angle=180, end angle=360, radius=1.25]--(1.5,0.4);
  \draw (0.25,0.4) arc[start angle=180, end angle=360, radius=1.25];
  \draw(0.25,0.4) to [out=10,in=-90]  (1.5,2.5)
 to [out=-90, in=175] (2.75,0.4); 
 \filldraw[red! 70! white ] (-4,2.5)--(-4,-0.5)--(-1,-0.5)--(-1,2.5)--(-4,2.5);
\draw (-4,2.5)--(-4,-0.5)--(-1,-0.5)--(-1,2.5)--(-4,2.5);
\end{scope}
\end{tikzpicture}
    \end{minipage}%
    \qquad
    \begin{minipage}[r]{.4\textwidth}
      \caption{{\small Examples of singular sets with positive reach.}}
    \end{minipage}
   }
\end{figure}
We point out that sets with positive reach are not 
enough to characterize the class of sets such that
$f_E(t)$ admits a second order Taylor expansion (see 
Remark \ref{remark_finale} for more details).

Sets with positive reach represent one of the most 
natural environments to try these kind of computations 
since curvatures are defined in a weak sense at the 
boundary of these sets. They were introduced in the '50s 
by Federer who was looking for a wider class of sets for 
which a Steiner-type Formula holds. Let 
$E\subseteq\mathbb{R}^n$, let $r>0$ and let $[E]_r$ be 
the $r$-
parallel set of $E$, namely the set consisting of all 
points $x\in\mathbb{R}^n$ whose distance from $E$ is less 
then $r$. An interesting formula to compute the $n$-
dimensional measure of $[E]_r$ was given by Jacob Steiner 
for polytopes and smooth convex bodies in 
dimension $2$ and $3$. Then Weyl in the '30s proved in 
\cite{wey_1939} a Steiner-type Formula for $C^2$ compact 
manifolds embedded in $\mathbb{R}^n$. In \cite{fed_1959} 
sets with positive reach were introduced as sets for 
which a projection map 
is defined on ${\rm Unp}(E)$, namely the set of all 
points having unique 
projection on $E$. More recent developments are due to 
Rataj and Z\"{a}hle 
in \cite{rat_zaj_2017,rot_zah_1990,zah_1986} who made 
this topic easier to understand using a more modern 
language and presenting simpler proofs. For a more 
systematic treatment, we refer to \cite{rat_zah_2019} and 
to the references therein. We also remember 
\cite{amb_col_vil_2008,hug_1998,hug_gun_wei_2004,men_san_2019,san_2020,stacho_1979}.

The paper is organized as follows.

In Section \ref{notations} we establish our notation and 
known results on rectifiability, sets with finite 
perimeter and sets with positive reach. Let 
$E\subseteq\R^n$ be a set with positive reach. Following 
the approach of \cite{rat_zah_2019,zah_1986}, we set 
\[
 {\rm nor}^\star(E)=\{(x,\nu)\in\mathbb{R}^n\times\mathbb{S}^{n-1} \ \mbox{s.t.}\ x\in\partial E\ \mbox{and}\ \nu\in{\rm Nor}(E,x)\},
 \]
 where the normal cone ${\rm Nor}(E,x)\}$ is defined in 
 Definition \ref{coninormalietangenti}. Then 
 $\mathcal{H}^{n-1}$-almost every $(x,\nu)\in
 {\rm nor}^\star(E)$ the generalized curvatures are well 
 defined, and the following Steiner-type formula holds 
 true
\begin{equation}
\mathcal{L}^n([E]_r\sm  E)=\sum_{k=0}^{n-1}
\frac{r^{n-k}}{n-k}\int_{{\rm nor}^\star(E)}
\sigma_E^{n-1-k}(x,\nu)d\,\mathcal{H}^{n-1}(x,\nu),
\end{equation}
 where $\sigma_E^{k}(x,\nu)$ denotes the generalized 
 symmetric functions of the principal curvatures of order 
 $k$ of $E$ at $(x,\nu)\in{\rm nor}^\star(E)$ (see 
 \eqref{sigma0}, \eqref{sigmak}). 
 
In Lemma \ref{decoE}, we point out the features of the 
sets
\begin{align}
\Sigma_{k}=\Big\{x\in E\ \big|\  {\rm dim}({\rm Nor}
(E,x))= n-k
\Big\},\ \ k=1,...,n,
\end{align}
are such that 
\begin{align}
\partial E=\bigcup_{k=1}^n\Sigma_k,
\end{align}
and for $\mathcal{H}^{n-1}$a.e. $z\in \Sigma^k$ exactly  
$n-1-k$ of the generalized curvatures are equal 
to $+\infty$.

In Section \ref{ps_rettificabilità} we recall some 
results of \cite{amb_col_vil_2008} where they proved that 
a set $E\subseteq\Rn$ with positive reach has finite 
perimeter and 
\[
P(E)=\mathcal{H}^{n-1}(\mathcal FE)\leq\mathcal{H}^{n-1}
(\partial E)<+\infty,
\]
where $\mathcal FE$ is the reduced boundary of $E$ and 
$\partial E$ is the topological boundary of $E$. From 
that we characterized the blow-up at each point $x_0$ 
belonging to $\partial E$. More precisely, in Theorem 
\ref{convhaus} we proved that for all $r>0$ the sets 
$\displaystyle{\frac{E-x_0}{\rho}\cap \overline{B_r(0)}}$ 
converge with respect to the Hausdorff distance as 
$\rho\to0^+$ to ${\rm Tan}(E,x_0)\cap \overline{B_r(0)}$, 
where ${\rm Tan}(E,x_0)$ is the tangent cone defined in 
Definition \ref{coninormalietangenti}. By Proposition 
\ref{beer}, the sets $\displaystyle{\frac{E-x_0}{\rho}}$ 
locally converge in measure as $\rho\to0^+$ to 
${\rm Tan}(E,x_0)$ (see Corollary \ref{blowup}).

In Section \ref{taylor}, we proved the second order 
Taylor expansion for the Heat content for a compact set 
with positive reach $E\subseteq\Rn$ (see Theorem 
\ref{teorema_sviluppo_ps}). More precisely, we proved that
\begin{align}\label{sviluppops_intro}
f_E(t)&=
\frac{t}{\sqrt{\pi}}
\int_{\mathcal{F}E}\varphi(x)d\mathcal{H}^{n-1}(x)
+
t^2\biggl[\int_{\mathcal{F}E}\left(\alpha_nH_E(x)
\varphi(x)-\ps{\nabla\varphi(x)}
{\nu_E(x)}\right)\,d\mathcal{H}^{n-1}(x)\nonumber\\
&+\int_{\Sigma_{n-2}} c_{n-2}(
x)\varphi(x)\,d\mathcal{H}^{n-2}(x)\biggr]+o(t^2),
\end{align}
where 
\begin{align}
&\alpha_n=2^{n-2} \pi^{\frac{n-1}{2}}(n-1),\\
&H_E(x)=\frac{1}{n-1}\sum_{j=1}^{n-1}k_j(x),\\
&\Sigma_{n-2}=\Bigl\{x\in E\ \big|\  {\rm dim}({\rm Nor}
(E,x))=2\Bigr\},\\
&c_{n-2}(x)=\frac{1}{(4\pi)^{\frac{n}{2}}}\int_{{\rm Nor}
(E,x)}d\mathcal{H}^{2}(y)\int_{{\rm Tan}
(E,x)}\norm{y}\,e^{-\frac{\norm{w-y}^2}{4}}\,dw,\ \ 
x\in\Sigma_{n-2},
\end{align}
and the curvatures $k_i(x)$ are meant as in Remark 
\ref{casoregolare} since $\mathcal{F}E$ is 
$\mathcal{H}^{n-1}$-rectifiable of class $C^2$.

In particular, taking $\varphi\equiv 1$ we get 
{\small\begin{align}\label{sviluppokintro}
K_{t^2}(E,E^c)&=
\frac{t}{\sqrt{\pi}}P(E)+t^2\biggl[\alpha_n
\int_{\mathcal{F}E}H_E(x)\,d\mathcal{H}^{n-1}(x)
+\int_{\Sigma_{n-2}} c_{n-2}(x)\,d\mathcal{H}^{n-2}
(x)\biggr]+o(t^2).
\end{align}}
By \eqref{norma2}, \eqref{norma1} and \eqref{sviluppokintro} we also obtain that
{\footnotesize\begin{align}
&\norm{T_t\mathbbm{1}_E}_{L^2(\R^n)}=\sqrt{\frac{2t}{\pi}}P(E)+2t\biggl[\alpha_n\int_{\mathcal{F}E}H_E(x)
\,d\mathcal{H}^{n-1}(x)
+\int_{\Sigma_{n-2}} c_{n-2}(x)\,d\mathcal{H}^{n-2}(x)\biggr]+o(t),\\
&\norm{T_t\mathbbm{1}_E-\mathbbm{1}_E}_{L^1(\R^n)}=2\sqrt{\frac{t}{\pi}}P(E)+2t\biggl[\alpha_n\int_{\mathcal{F}E}H_E(x)
\,d\mathcal{H}^{n-1}(x)
+\int_{\Sigma_{n-2}} c_{n-2}(x)\,d\mathcal{H}^{n-2}(x)\biggr]+o(t).
\end{align}}

\section{Notations and preliminary results}\label{notations}

In this section we recall results and notation of geometric measure theory and on sets with positive reach.

We work in the Euclidian space $\Rn$, with $n\geq 2$, and we denote 
by $\norm{\,\cdot\,}$ and $\ps{\cdot}{\cdot}$ the standard norm and 
the standard inner product of $\Rn$, respectively. For all open subset 
$\Omega$ of $\Rn$ we denote by $\mathcal{B}(\Omega)$  the $
\sigma$-algebra of Borel sets of $\Omega$. For all $r>0$ and 
$x_0\in \Rn$, $B_r(x_0)$ denotes the open ball of center $x_0$ and 
radius $r$ and $S_r(x_0)$ is the sphere of center $x_0$ and radius $r
$; in particular, we write $\Sn$ for $S_1(0)$. For all $s\geq 0$, $
\mathcal{H}^\alpha$ is the $\alpha$-dimensional Hausdorff measure, 
$\mathcal{L}^n$ is the Lebesgue measure on $\Rn$ and by we denote by $\omega_{n}$ is the $n$-dimensional Lebesgue measure of $B_1(0)$.
Given a subset $E$ of $\Rn$, we define the Hausdorff dimension of 
$E$ as the real number  
\[
{\rm dim}_{\rm H} (E)=\inf
\Big\{ \alpha\ \big|\  0\leq
 \alpha<+\infty, \ \mathcal{H}^\alpha(E)=0\Big\}=
 \sup\Big\{ \alpha\ \big|\ 0\leq \alpha<+\infty, 
 \ \mathcal{H}^\alpha(E)=+\infty\Big\}.
\]
Moreover $\mathring{E}$ is the interior of $E$, $\overline{E}$ is the 
closure of $E$, $E^c$ in the complement of the set $E$ in $\R^n$. The convex 
hull of $E$ is the set 
\[
co(E):=\bigcap_{S\supseteq E}S,
\] 
whenever 
$S$ is convex (i.e. $co(E)$ is the smallest convex set containing $E$). 

Given $x\in\Rn$, $\delta_E(x)$ denotes the distance of $x$ from the 
set $E$, namely we are defining the mapping $\delta_E:\Rn
\longrightarrow[0,+\infty]$ such that 
\[
\delta_E(x)=\inf_{y\in E} 
\norm{x-y},\quad x\in\Rn.
\] 
$\delta_E$ is a $1$-Lipschitz map (see e.g. \cite[Lemma 4.2]
{rat_zah_2019}).
Moreover the $s-$parallel set to $E$ is  
\[
[E]_s=\{x\in \Rn\ |\ 
\delta_E(x)\leq s\},
\]
$s-$annulus set to $E$ is 
\begin{equation}\label{annulus}
A_s(E)=[E]_s\smallsetminus\mathring{E}
\end{equation}
and the dual of $E$ is the set 
\[
\mbox{Dual}(E):=\Big\{v\in \Rn\ \big|\ps{v}{u}\leq0\ \forall u\in E
\Big\}.
\]  
Let $C\subseteq\Rn$. We say that $C$ is a positive cone or simply 
a cone if $\lambda c\in C$ whenever $\lambda\geq 0$ and $c\in C$.
Clearly $\mbox{Dual}(E)$ is a closed convex cone. 
We denote by $C(E)$ the cone generated by $E$
\[
C(E)=\{\lambda x\ \big|\ \lambda\geq0 , x\in E\}
\]
and for $x_0\in\Rn$, we set
\[
C(x_0,E)=x_0+C(E)
\]

Let $C\subseteq\Rn$ be a cone: we set 
\[
{\rm dim}(C):=\min\Big\{ {\rm dim}(V)\ \big|\  C\subseteq V, V \mbox{vector space}\Big\}
\]
Since if $C\subseteq V$ with ${\rm dim}(C)={\rm dim}(V)$, 
$V$ vector space, then $V^\perp\subseteq {\rm Dual}(V)$, 
we deduce that
\[
{\rm dim}(C)+{\rm dim}({\rm Dual}(C))\geq n,
\]
with the equality  if and only if $C$ is a vector space.

We recall the following result and give a short proof for
reader convenience.

\begin{lemma}
Let $C\subseteq\Rn$ be a closed convex cone. Then ${\rm dim}(C)={\rm dim}_H(C)$.
\end{lemma}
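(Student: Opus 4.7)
The plan is to prove the two inequalities $\dim_H(C)\le \dim(C)$ and $\dim_H(C)\ge \dim(C)$ separately, exploiting the fact that a closed convex cone is ``thick'' in every direction it spans.

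For the upper bound, set $d=\dim(C)$ and let $V$ be a $d$-dimensional vector subspace of $\R^n$ with $C\subseteq V$. Since $V$ is bi-Lipschitz equivalent to $\R^d$, one has $\dim_H(V)=d$, and by monotonicity of the Hausdorff dimension we immediately get $\dim_H(C)\le d=\dim(C)$.

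For the lower bound, the main point is to exhibit a $d$-dimensional solid simplex inside $C$. Since $d=\dim(C)$ is the dimension of the linear span of $C$, by choosing finitely many elements of $C$ whose span is $\mathrm{span}(C)$ and extracting a basis, we can find $v_1,\dots,v_d\in C$ that are linearly independent in $\R^n$. Now $C$ is a closed convex cone, hence stable under nonnegative linear combinations (convexity plus scaling), and $0\in C$; therefore the solid simplex
\[
\Delta=\Bigl\{\sum_{i=1}^d t_i v_i \ \big|\ t_i\ge 0,\ \sum_{i=1}^d t_i\le 1\Bigr\}
\]
is contained in $C$. Viewed inside the $d$-dimensional subspace $V=\mathrm{span}(v_1,\dots,v_d)$, this simplex has positive $d$-dimensional Lebesgue measure, equal to $\tfrac{1}{d!}|\det(v_1,\dots,v_d)|>0$. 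Identifying $V$ with $\R^d$ via a bi-Lipschitz isomorphism and using the relation between $\mathcal{L}^d$ and $\mathcal{H}^d$ on $\R^d$, we conclude that $\mathcal{H}^d(\Delta)>0$, hence $\mathcal{H}^d(C)>0$ and $\dim_H(C)\ge d=\dim(C)$.

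There is no real obstacle here; the only small subtlety is justifying that $C$ itself contains a basis of $\mathrm{span}(C)$ (rather than merely having this span abstractly), and this is handled by writing $\mathrm{span}(C)=\bigcup\{\mathrm{span}(c_1,\dots,c_k):c_i\in C\}$ and extracting a maximal linearly independent subfamily. Once this is in place, the convex cone property automatically provides the solid simplex that witnesses the sharp lower bound.
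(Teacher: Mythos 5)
Your proof is correct and follows essentially the same route as the paper: both arguments extract $d=\dim(C)$ linearly independent vectors from $C$, use convexity together with the cone property to produce a subset of $C$ with positive $\mathcal{H}^d$-measure inside $V=\mathrm{span}(v_1,\dots,v_d)$ (a solid simplex in your case, a relative ball in the paper's), and obtain the upper bound from the inclusion $C\subseteq V$. The only cosmetic remark is that for $d<n$ the volume of the simplex should be expressed via the Gram determinant rather than $\det(v_1,\dots,v_d)$, but positivity of $\mathcal{H}^d(\Delta)$ is all that is needed and is clear.
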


\begin{proof}
If ${\rm dim}(C)=0$ then $C=\{0\}$ and then the conclusion holds. 

If ${\rm dim}(C)=d\geq 1$ then there exist exactly 
$v_1,...,v_d\in \Sn\cap C$ linearly independent. 
Indeed if there exist $d+1$ independent vectors, $C$
then the smallest vector space containing $C$ has dimension at least 
$(d+1)$.
On the other hand, $C$ contains more than $d-1$ independent 
vectors. If not $C$ would be contained in $(d-1)$-dimensional 
space which is a contradiction.  
By convexity  we obtain that $co(\{v_1,...,v_d\})\subseteq C$ and 
then $C$ has nonempty interior in $V={\rm span}\{v_1,...,v_d\}$, 
i.e. there exists $\rho>0$ and $x\in C$ such that $B_\rho(x)\cap V
\subseteq C$. Hence we get
\[
 0<\mathcal{H}^d(B_\rho(x)\cap V)\leq \mathcal{H}^d(C).
 \]
 In addition, since $C\subseteq V$, $\mathcal{H}^\alpha
 (C)\leq \mathcal{H}^\alpha(V)=0$ for all $\alpha>d$;
 hence $\mbox{dim}_H(C)=d$. 
\end{proof}       
We denote by $C^k_b(\R^n)$ the space of the bounded and continuous functions $f:\R^n\longrightarrow\R$ whose derivatives up to order $k$ are bounded and continuous. Moreover, we write $C_b(\R^n)$ instead of $C^0_b(\R^n)$ and we set $\displaystyle{C^\infty_b(\R^n)=\bigcap_{k\in\N}C^k_b(\R^n)}$.
                   
\subsection{Rectifiability}
In this section we recall basic facts of rectifiability sets; the main references are \cite{amb_fus_pal_2000,anz_ser_1994,del_2003}.

\begin{definition}
Let $E$ be a Borel subset of $\Rn$.
\begin{enumerate}[{\rm (i)}] 
\item $E$ is $k$-rectifiable if  there exist a Lipschitz functions $f:\R^k\longrightarrow\mathbb{R}^n$ and a bounded subset $B$ of $\R^k$ such that $E=f(B)$.
\item $E$ is countably $k$-rectifiable if $E$ is a countably union of $k$-rectifiable sets.
\item $E$ of $\Rn$ is  countably $\mathcal{H}^k$-rectifiable if there exists a countable family of Lipschitz functions $\{f_i\}_{i\in\mathbb{N}}$, $f_i:\R^k\longrightarrow\mathbb{R}^n$ such that:
    \begin{enumerate}[(i)]
           \item $\displaystyle{E\subseteq\bigcup_{i\in\mathbb{N}}f_i(\R^k)}$,
       \item $\displaystyle{\mathcal{H}^k\Bigl(E\smallsetminus\bigcup_{i\in\mathbb{N}}f_i(\R^k)\Bigr)=0}$.
       \end{enumerate}
       \item we say that $E$ is $\mathcal{H}^k$--rectifiable
       of class $C^s$ if
       it is countably $\mathcal{H}^k$--rectifiable and  the functions 
       $f_i$, $i\in \N$,  are of class $C^s$, $s\in \N$. 
  \end{enumerate}
  \end{definition}
       
Let $\mu$ be a $\R^{m}$-valued Radon measure on an open set $\Omega\subseteq\Rn$. We say that $\mu$ is $k$-rectifiable if there exist a countably $\mathcal{H}^k$-rectifiable set $E$ and a Borel map $\theta:E\longrightarrow\R^{m}$ such that $\mu=\theta\mathcal{H}^k\mr E.$
Moreover we say that $\mu$ has a $k$-dimensional approximate tangent space $\pi$ with multiplicity $\theta\in\R^{m}$ if given $B\in\mathcal{B}(\Rn)$, $\rho^k\mu(x+\rho B)$ locally weakly$^*$ converges to the Radon measure $\theta\mathcal{H}^k\mr\pi$ in $\Rn$ as $\rho\searrow 0^+$, namely 

\[
\lim_{\rho\searrow 0^+}\frac{1}{\rho^k}\int_{\Omega}\varphi\left(\frac{y-x}{\rho}\right)\,d\mu(y)=\theta(x)\int_\pi\varphi(y)\,d\mathcal{H}^k(y),\ \ \mbox{for all}\ \varphi\in C_c(\Rn).
\] 
In this case we write 
\[
{\rm Tan}^k(\mu,x)=\theta\mathcal{H}^k\mr\pi.
\]

Let $\Omega\subseteq\Rn$ be open. For all $E\in\mathcal{B}(\Rn)$, we say that $E$ has finite perimeter in $\Omega$ if there exists a $\Rn$-valued Radon measure in $\Omega$, denoted by, $D\mathbbm{1}_E$, such that for all $\phi\in C^\infty_c(\Omega)$ we have
\begin{equation}
\int_E\nabla\phi\,d\mathcal{L}^n=-\int_{\Rn}\phi\,dD\mathbbm{1}_E.
\end{equation}
We denote by $P(E, \Omega)$ the perimeter of $E$ in $\Omega$ and  \[
P(E, \Omega)=\abs{D\mathbbm{1}_E}(\Omega),
\]
 where we denote by $\abs{D\mathbbm{1}_E}$ the total variation of the measure $D\mathbbm{1}_E$.
If $\Omega=\Rn$, we will write $P(E)$ instead of $P(E,\R^n)$.

As known, the topological notion of boundary is not relevant for sets with finite perimeter. Hence we recall the notions of reduced boundary and essential boundary. 
We start by the definition of densities.
 \begin{definition}Let $\Omega\subseteq\Rn$ be open, $S\in\mathcal{B}(\Omega)$ and $x\in\Omega$.
   \begin{enumerate}[\rm(i)]
        \item The upper $k$-dimensional density of $S$ at $x$ is defined by
       \begin{equation}
         \Theta^\ast_k(S,x)=\limsup_{\rho\rightarrow0^+}\frac{\mathcal{H}^k(S\cap B_{\rho}(x))}{\omega_k\rho^k}. \nonumber
       \end{equation}
     \item The lower $k$-dimensional density of $S$ at $x$ is defined by
       \begin{equation}
         \Theta_{\ast k}(S,x)=\liminf_{\rho\rightarrow0^+}\frac{\mathcal{H}^k(S\cap B_{\rho}(x))}{\omega_k\rho^k}. \nonumber
       \end{equation}
   \end{enumerate}
   If $\Theta^\ast_k(S,x)=\Theta_{\ast k}(S,x)$ we denote this common value with $\Theta_k(S,x)$ and we call it $k$-dimensional density of $S$ at $x$.
 \end{definition}

Let $\Omega\subseteq\Rn$ and let $\mu$ be a $\Rn$-valued measure on $(\Omega,\mathcal{B}(\Omega))$. The support of $\mu$ is the closed set denoted by ${\rm supp}(\mu)$ of all point s $x\in\Omega$ such that $\abs{\mu}(U)>0$ for all neighborhood $U$ of $x$. Moreover we say that $\mu$ is concentrated on $S\in\mathcal{B}(\Omega)$ if $\abs{\mu}(\Omega\sm S)=0$.

\begin{definition}
Let $\Omega\subseteq\Rn$ be open and let $E\in\mathcal{B}(\Rn)$.
\begin{enumerate}[\rm(i)]
\item The essential boundary $\partial^*E$ of $E$ is the set 
\[
\partial^*E=\Rn\sm(E^0\cup E^1),
\]
where 
\[
E^i=\Big\{x\in\Rn \ \big|\ \Theta_n(S,x)=i\Big\},\ \ i\in[0,1].
\]
\item If in addition $E$ has finite perimeter, then we define the reduced boundary $\mathcal{F}E$ of E  as the set of all $x\in {\rm supp}(D\mathbbm{1}_E)\cap\Omega$ such that 
\begin{equation}\label{gen_out_vect}
         \nu_E(x):=-\lim_{\rho\rightarrow0^+}\frac{D\mathbbm{1}_E(B_{\rho}(x))}{\abs{D\mathbbm{1}_E}(B_{\rho}(x))}, 
       \end{equation}
  exists and $\norm{\nu_E}=1$. We call the mapping $\nu_E:\mathcal{F}E\longrightarrow\Sn$ generalized outer normal to $E$.
\end{enumerate}
\end{definition}

Clearly $\mathcal{F}E$ is a Borel set of $\Omega$ and $\nu_E$ is a Borel mapping. Moreover by the Besicovitch derivation Theorem $\abs{D\mathbbm{1}_E}$ is concentrated on $\mathcal{F}E$ and $D\mathbbm{1}_E=\nu_E\abs{D\mathbbm{1}_E}$.
We state in the following theorem the main results due to De Giorgi and Federer on the structure of sets of finite perimeter. We recall first that a family of Borel sets $\{E_\rho\}_{\rho>0}\subseteq\Rn$ locally converges in measure in $\Rn$ to a set $E\subseteq\Rn$ if and only if $\mathbbm{1}_{E_\rho}$ converges to $\mathbbm{1}_E$ in $L^1_{\rm loc}(\Rn)$ as $\rho\searrow 0^+$.

\begin{theorem}\textup{\cite[Theorem 3.59 and Theorem 3.61]{amb_fus_pal_2000}}
Let $\Omega\subseteq\Rn$ be open. If $E\subseteq\Rn$ has finite perimeter in $\Omega$ then $\abs{D\mathbbm{1}_E}$ coincides with $\mathcal{H}^{n-1}\mr \mathcal{F}E$ and $P(E,\Omega)=\mathcal{H}^{n-1}(\mathcal{F}E$). Let. $x_0\in\mathcal{F}E$ the following statements hold.
\begin{enumerate}[\rm (i)]
\item the sets $\frac{E-x_0}{\rho}$ locally converge in measure in $\Rn$ as $\rho\searrow0^+$ to the halfspace
\[
H:=\Big\{x\in\Rn\ \big|\  \ps{\nu_E(x_0)}{x}\leq 0\Big\},
\]
where $\nu_E(x_0)$ is the generalized outer vector at $x_0$ and it is given by \eqref{gen_out_vect}.
\item ${\rm Tan}^{n-1}(\abs{D\mathbbm{1}_E},x_0)=\mathcal{H}^{n-1}\mr\nu_E^{\perp}(x_0)$ and 
\begin{equation}
\lim_{\rho\searrow0^+}\frac{\mathcal{H}^{n-1}\left(\mathcal{F}E\cap B_\rho(x_0)\right)}{\omega_{n-1}\rho^{n-1}}=1
\end{equation}
\end{enumerate}
Moreover $\mathcal{F}E\subseteq E^{\frac{1}{2}}\subseteq \partial^*E$ and $\mathcal{H}^{n-1}(\Omega\sm(E^0\cup\mathcal{F}E\cup E^1))=0$. In particular $E$ has density either $0$, $\frac{1}{2}$ or $1$ at $\mathcal{H}^{n-1}$-a.e. $x\in \Omega$ and 
\[
\mathcal{H}^{n-1}(\partial^*E\smallsetminus\mathcal{F}E)=0.
\]
\end{theorem}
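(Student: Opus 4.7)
The plan is to run the classical De Giorgi--Federer blow-up argument at each $x_0\in\mathcal{F}E$. Fix such a point and consider the rescaled sets $E_\rho:=(E-x_0)/\rho$. Using the change-of-variables formula for BV functions, the vector-valued perimeter measure satisfies $D\mathbbm{1}_{E_\rho}(A)=\rho^{1-n}D\mathbbm{1}_E(x_0+\rho A)$, hence $|D\mathbbm{1}_{E_\rho}|(B_R(0))=\rho^{1-n}|D\mathbbm{1}_E|(B_{\rho R}(x_0))$. A first technical step is the upper density bound $|D\mathbbm{1}_E|(B_r(x_0))\le Cr^{n-1}$ for small $r$, obtained by comparing $E$ with balls via the relative isoperimetric inequality. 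This makes the rescaled perimeters uniformly bounded on each $B_R(0)$, and BV compactness provides, along a subsequence $\rho_k\to 0^+$, convergence $\mathbbm{1}_{E_{\rho_k}}\to\mathbbm{1}_F$ in $L^1_{\mathrm{loc}}$ for some set $F$ of locally finite perimeter.

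The central step is identifying $F$ as the halfspace $H$. By the definition of $\mathcal{F}E$, the averaged vector $D\mathbbm{1}_E(B_\rho(x_0))/|D\mathbbm{1}_E|(B_\rho(x_0))$ tends to $-\nu_E(x_0)$; combining this with weak$^*$ convergence of the rescaled vector measures and lower semicontinuity of the total variation, one deduces that the components of $D\mathbbm{1}_F$ orthogonal to $\nu_E(x_0)$ vanish and its $\nu_E(x_0)$-component is a nonpositive measure. A slicing argument in the $\nu_E(x_0)$ direction, where $\mathbbm{1}_F$ must be monotone on each line, upgrades this to the statement that $F$ is, up to Lebesgue negligible sets, exactly $H$. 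Uniqueness of the limit removes the subsequence, proving (i). For (ii), testing compactly supported continuous functions against the rescaled measures $\rho^{1-n}|D\mathbbm{1}_E|(x_0+\rho\,\cdot\,)$ and using the identification above yields weak$^*$ convergence to $|D\mathbbm{1}_H|=\mathcal{H}^{n-1}\mr\nu_E(x_0)^\perp$, which is exactly ${\rm Tan}^{n-1}(|D\mathbbm{1}_E|,x_0)$; specializing to approximations of $\mathbbm{1}_{B_1(0)}$ gives the claimed density limit.

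For the remaining structure statements, the inclusion $\mathcal{F}E\subseteq E^{1/2}$ is immediate from (i) by computing Lebesgue measures of $E\cap B_\rho(x_0)$ and comparing with those of $H\cap B_1(0)$. The identity $\mathcal{H}^{n-1}(\partial^*E\sm\mathcal{F}E)=0$ is Federer's contribution and requires showing that at $\mathcal{H}^{n-1}$-a.e. point of the essential boundary the reduced-boundary conditions are in fact met: this is accomplished by a Vitali covering argument combined with the relative isoperimetric inequality, which forces $|D\mathbbm{1}_E|$ to have positive lower $(n-1)$-density at such points and then, via Besicovitch differentiation, produces the limit direction $\nu_E$. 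I expect the hardest step to be the identification of the blow-up limit as a halfspace: turning the two ingredients, namely the upper density bound and the vectorial limit in the definition of $\nu_E(x_0)$, into the rigid conclusion that $F$ is affine requires carefully combining lower semicontinuity, to prevent mass loss in the limit, with the one-dimensional monotonicity of slices in the normal direction. Everything else is either a compactness or a differentiation argument.
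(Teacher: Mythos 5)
This theorem is not proved in the paper at all: it is quoted as background from \cite[Theorems 3.59 and 3.61]{amb_fus_pal_2000}, so there is no internal proof to compare against. Your outline is the standard De Giorgi--Federer argument from that source: rescaling, the upper density estimate from the relative isoperimetric inequality, BV compactness, identification of the blow-up as a halfspace via the vectorial limit defining $\nu_E(x_0)$ together with lower semicontinuity and monotonicity of one-dimensional slices, weak$^*$ convergence of the rescaled measures for part (ii), and the covering/differentiation argument for Federer's identity $\mathcal{H}^{n-1}(\partial^*E\sm\mathcal{F}E)=0$. On all of these points the sketch is sound and follows the same route as the cited reference.

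There is, however, one assertion of the statement that your plan does not reach: the identity $\abs{D\mathbbm{1}_E}=\mathcal{H}^{n-1}\mr\mathcal{F}E$ (and with it $P(E,\Omega)=\mathcal{H}^{n-1}(\mathcal{F}E)$), which is the part of the theorem the rest of the paper actually uses. The density limit you extract in (ii) gives $\Theta_{n-1}(\abs{D\mathbbm{1}_E},x_0)=1$ at every $x_0\in\mathcal{F}E$, and the standard density-comparison theorems then yield $\abs{D\mathbbm{1}_E}\geq\mathcal{H}^{n-1}\mr\mathcal{F}E$, but they only give the reverse inequality up to a dimensional constant. The classical way to close this is De Giorgi's rectifiability theorem: one shows that $\mathcal{F}E$ is countably $(n-1)$-rectifiable (e.g.\ by covering it, via the blow-up and a Whitney-type or Lipschitz-graph argument, with countably many $C^1$ or Lipschitz hypersurfaces) and then computes $\abs{D\mathbbm{1}_E}$ on each piece to get equality with constant exactly $1$. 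This step is genuinely needed and is not a consequence of anything you wrote; you should either add it or note explicitly that you are importing the rectifiability of $\mathcal{F}E$ from the reference.
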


Finally we need some notions on higher order rectifiability. The main reference is \cite{anz_ser_1994}.

Let  $\Lambda_k$ be the set of the $k$-vectors in $\Rn$ and by
$\Sigma_*$ the set of all non-zero simple $k$-vectors in $\Rn$. 
By $T(\tau)$ we denote the enveloping subspace of any $\tau\in \Sigma_*$.
 In the special case $s=2$, we have the following useful criteria for $C^2$--rectifiability. 
 
\begin{theorem}\textup{\cite[Corollary\,3.2.]{del_2003}}\label{delladio}
Let $E\subseteq\Rn$ be $\mathcal{H}^k$-rectifiable and $\pi: \Rn \times \Lambda_k\Rn\longrightarrow \Rn$ be the orthogonal projection on the first component. Then the following assertions are equivalent. 
\begin{enumerate}[\rm(i)]
\item The set $E$ is $\mathcal{H}^k$-rectifiable of class $C^2$.
\item There exists a $\mathcal{H}^k$-rectifiable set $F \subseteq \Rn \times \Lambda_k\Rn$ such that
\begin{enumerate}[\rm (a)]
\item $\pi(F)=E$;
\item $\tau\in\Sigma_*$ and $T(\tau)={\rm Tan}^k(\mathcal{H}^k\mr F,x)$, for $\mathcal{H}^k\mr F$-almost all $(x,\tau).$
\end{enumerate}
\end{enumerate}
\end{theorem}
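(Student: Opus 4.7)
The plan is to prove the two implications separately, with the guiding principle that $C^2$-rectifiability of $E$ corresponds exactly to Lipschitz-rectifiability of its Gauss lift, i.e.\ of the graph in $\R^n\times\Lambda_k\R^n$ of the oriented tangent $k$-vector field.

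For the implication $(i)\Rightarrow(ii)$, I would start from the definition: there exist countably many $C^2$ maps $f_i:\R^k\to\R^n$ whose images cover $E$ up to an $\mathcal{H}^k$-null set. After localising (using the implicit function theorem to throw away the negligible critical set and to work on open pieces where $f_i$ is an immersion), each $M_i=f_i(U_i)$ is a $C^2$ embedded submanifold, and its Gauss map $G_i:M_i\to\Lambda_k\R^n$, $x\mapsto \tau_i(x)=\partial_1f_i\wedge\cdots\wedge\partial_kf_i/|\cdot|$, is of class $C^1$ because it is a rational function of the first partials of $f_i$. Setting $F_i:=\mathrm{graph}(G_i)\subseteq\R^n\times\Lambda_k\R^n$ and $F:=\bigcup_iF_i$, the set $F$ is $\mathcal{H}^k$-rectifiable, $\pi(F)=E$ up to a null set, and a direct computation of the tangent to the $C^1$-graph $F_i$ at $(x,\tau_i(x))$ shows that its projection onto the $\R^n$-factor coincides with $T_xM_i=T(\tau_i(x))$, giving (b).

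For $(ii)\Rightarrow(i)$, which is the substantive direction, I would cover $F$ by countably many Lipschitz parameterisations $h_j=(h_j^1,h_j^2):K_j\subseteq\R^k\to\R^n\times\Lambda_k\R^n$ with $h_j^2\in\Sigma_*$ almost everywhere. By Rademacher's theorem and the area formula, the approximate tangent space of $F$ at $h_j(t)$ is the linear image of $dh_j(t)$. Condition (b), projected onto the first factor, then forces $\mathrm{Image}(dh_j^1(t))=T(h_j^2(t))$ for a.e.\ $t$; in other words the Lipschitz map $h_j^2$ records, pointwise a.e., the approximate tangent plane to $h_j^1$. The key step is now to upgrade this to $C^{1,1}$ regularity on a large subset. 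Using Lusin to make $h_j^2$ continuous on a compact $K'\subseteq K_j$ of arbitrarily large measure, together with a second-order Taylor control of the form $h_j^1(t')=h_j^1(t)+(\,\cdot\,)(t'-t)+o(|t'-t|)$ extracted from the tangent-compatibility and a Whitney-type estimate, the Whitney extension theorem of class $C^{1,1}$ applied to the pair $(h_j^1,h_j^2)$ provides a global $C^{1,1}$ map whose image contains $h_j^1(K')$. Exhausting $K_j$ by such $K'$ and taking the countable union in $j$ yields the desired $C^2$-rectifiable (indeed $C^{1,1}$-rectifiable) cover of $E$.

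The main obstacle is precisely this Whitney-extension step in $(ii)\Rightarrow(i)$: one only has almost-everywhere information on $h_j^2$ and the matching condition with $h_j^1$, and one must promote it to a pointwise second-order Taylor estimate valid on a set of density one. This requires combining Egorov/Lusin to make convergence uniform, a careful use of the fact that $h_j^2\in\Sigma_*$ a.e.\ (so that $T(h_j^2(t))$ is genuinely $k$-dimensional and the projection from the tangent to $F$ to $T(h_j^2(t))$ is an isomorphism), and finally the verification of Whitney's second-order compatibility condition in the Lipschitz setting, which is the content of the main theorem of \cite{del_2003}.
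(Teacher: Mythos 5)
First, a point of comparison: the paper does not prove Theorem \ref{delladio} at all --- it is quoted, with citation, from \cite[Corollary 3.2]{del_2003}, so there is no internal argument to measure your proposal against. Judged on its own terms, your sketch of $(i)\Rightarrow(ii)$ is correct in outline: after discarding the critical sets of the $C^2$ parameterisations (which are $\mathcal{H}^k$-negligible in the image), the Gauss lift $x\mapsto(x,\tau_i(x))$ is $C^1$, hence Lipschitz, and the tangent to its graph projects onto $T(\tau_i(x))$; this gives a rectifiable $F$ satisfying (a) and (b). Note only that condition (b) as transcribed in the statement equates a $k$-plane in $\Rn$ with a $k$-plane in $\Rn\times\Lambda_k\Rn$; your reading (the tangent to $F$ at $(x,\tau)$ projects isomorphically onto $T(\tau)$) is the intended one from \cite{del_2003} and is what your argument actually verifies.

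For $(ii)\Rightarrow(i)$, however, your proposal identifies where the difficulty sits but does not resolve it. The passage from the almost-everywhere identity $\mathrm{Image}(dh_j^1)=T(h_j^2)$ to a \emph{pointwise} second-order Taylor estimate on a set of full measure --- the Whitney $C^{1,1}$ compatibility condition for the pair $(h_j^1,h_j^2)$ --- is precisely the analytic core of Delladio's theorem, and your text concedes this by deferring it to ``the main theorem of \cite{del_2003}''. As a proof of the cited corollary this is circular: the Lusin/Egorov reductions you list make the data continuous and the convergence uniform, but they do not by themselves produce the quantitative $o(|t'-t|^2)$ control needed for the extension; that requires the specific sup-norm estimates on generalized Gauss graphs developed in \cite{del_2003}. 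Two further points should be made explicit rather than implicit: the conclusion of the Whitney step is $C^{1,1}$-rectifiability, and its equivalence with $C^2$-rectifiability for sets is itself a theorem of Anzellotti and Serapioni \cite{anz_ser_1994}; and one must check that the projection $\pi$ restricted to $F$ has Lipschitz (not merely continuous) local inverses on the pieces where the tangent of $F$ projects isomorphically onto $T(\tau)$, so that $E$ is genuinely covered by the images $h_j^1(K')$. So the architecture is right, but the decisive estimate is missing.
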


\begin{remark}\label{graficolocale}
Let $\Sigma\subseteq\Rn$ be a $C^2$-hypersurface with principal curvatures $k_1(x),...,k_{n-1}(x)$. For all $\rho>0$ and $x\in\Sigma$ we denote by $Q^n_\rho(x)$ the $n$-dimensional cube with side $\rho$ and we set $Q^{n-1}_\rho(x):=Q^n_\rho(x)\cap(x+\nu_E(x)^\perp)$. 
Assume that $\Sigma$ is locally the graph of a $C^2$ function, namely for all $x\in\Sigma$  there exist $\rho>0$ such that $E\cap Q_\rho(x)$ is the sub-graph of a $C^2$ mapping $u: Q^{n-1}_\rho(x)\longrightarrow\R$ with $u(x)=0$ and $\nabla u(x)=0$. Then $D^2u(x)$ is a symmetric matrice and its eigenvalues coincides with the principal curvatures of $\Sigma$, namely there exists an orthonalmal basis $\{v_j(x)\ :\ j=1,...,n-1\}$ such that 
\begin{align*}
\ps{D^2u(x)v_i}{v_j}=\delta_{i,j}k_i(x).
\end{align*}
\end{remark}

\subsection{Sets with positive reach}
In this section we recall the main properties of sets with positive reach. 
This class of sets was first introduced by Federer in \cite{fed_1959}
in connection with the description of the curvature measures	. 
For more and recent results on curvature measure and on the 
structure on sets with positive reach we refer to 
\cite{rat_zah_2019,rat_zaj_2017,rot_zah_1990,zah_1986}. All the 
statement given in this section were originally proven in
\cite{fed_1959} but most of the results are also contained in the  
recent book by Rataj and Z{\"a}hle \cite{rat_zah_2019}.

Let $E\subseteq\Rn$: we define the set ${\rm Unp}(E)$ of all points 
having a unique projection on $E$ as 
\[
{\rm Unp}(E):=\Big\{y\in \Rn |\ \exists!\ x\in E\  \mbox{s.t.}\  
\delta_E(y)=\norm{x-y}\Big\}.
\]
The definition of ${\rm Unp}(E)$ implies that it is well defined a projection map  
\[
\pi_E:{\rm Unp}(E)\longrightarrow \ E
\] 
such that $\pi_E(y)=x,$ where $y$ is the unique element of $E$ such that $\delta_E(y)=\norm{x-y}$.   
 It can be shown that $\pi_E$ is continuous (see e.g. \cite[Lemma 4.1]{rat_zah_2019}).
We say that $E$ has positive reach at $x\in E$ if 
\[
{\rm reach}(E,x):=\sup\Big\{r>0\ \big|\ B_r(x)\subseteq {\rm Unp}(E)
\Big\}>0
\] 
and the reach of $E$ as 
\[
{\rm reach}(E)=\inf_{x\in E} {\rm reach}(E,x).
\] 

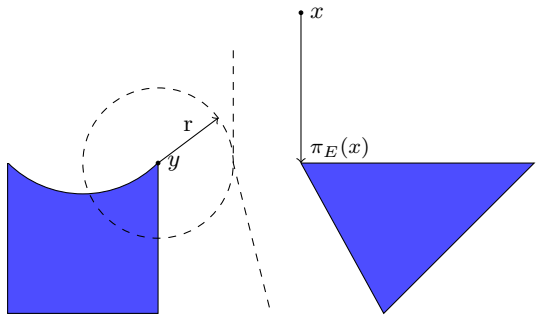
\begin{figure}[!htbp]
  \centering
   \mbox{%
     \begin{minipage}[l]{.25\textwidth}
      \caption{{\footnotesize In this picture the set $E$ is represented by the union of the two blue regions of the plane.  Moreover $r$ is the  ${\rm reach}(E,y)$ and the dashed line does not belong to ${\rm Unp}(E)$.} }
    \end{minipage}%
    \qquad
     \begin{minipage}[r]{0.5\textwidth}
      \begin{tikzpicture}[font=\small]
      \clip (-0.2,-2.1) rectangle + (7.3,7.2);
    \fill[blue!70] (0,0) rectangle (2,2);
    \draw (0,0) rectangle (2,2) ;
    \fill[white] (1,3) circle [radius=1.41];
    \draw (1,3) circle [radius=1.41];
    \fill[white] (-3,2) rectangle (3,5);
    \draw[dashed] (2,2) circle [radius=1];
    \filldraw (2,2) circle [radius=0.7pt] node[below,right] {$y$};
    \draw[->] (2,2)--(2.8,2.6) node[midway,above] {r};
    \draw[dashed] (3,3.5)--(3.,2)--(3.5,0);
    \filldraw (3.9,4) circle [radius=0.7pt] node[right] {$x$};
    \draw[->] (3.9,4)--(3.9,2);
    \fill[blue!70] (3.9,2)--(5,0)--(7,2)--(3.9,2);
    \draw (3.9,2)--(5,0)--(7,2)--(3.9,2);
    \node[above,right] at (3.9,2.2) {{\footnotesize$\pi_{E}(x)$}};
  \end{tikzpicture}
    \end{minipage}%
    }
\end{figure}

\begin{rmk}
We notice that 
\[
{\rm reach}\left(\frac{E-x_0}{\varrho}\right)
=\frac{1}{\varrho}{\rm reach}(E),\ \ \forall\rho>0.
\]
\end{rmk}

Clearly, if ${\rm reach}(E)>0$ then $E$ is closed and $\overline{E}\subseteq{\rm Unp}(E)$.
We stress that it is not required any kind of regularity of the boundary for sets with positive reach and hence standard definitions of tangent and normal spaces are not relevant in this setting. 

\begin{proposition}\label{prop_fed59}Let $E\subseteq\mathbb{R}^n$ be closed and $x\notin E$.
    \begin{enumerate}[\rm (i)]
      \item If $\delta_E$ is differentiable at $x$ then $x\in{\rm Unp}(E)$ and
      \begin{equation}
        \nabla\delta_E(x)=\frac{x-\pi_E(x)}{\delta_E(x)}.
      \end{equation}
      \item If $x\in{\rm Unp}(E)$ then $\delta_E$ is differentiable at $x$ 
	and then 
      \begin{equation}\label{147}
        x=\pi_E(x)+\delta_E(x)\nabla\delta_E(x),
      \end{equation}
      and $\nabla\delta_E(x)$ is the unique unit vector that satisfies \eqref{147}.
               \end{enumerate}
    \end{proposition}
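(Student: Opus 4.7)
The plan is to treat the two parts by rather different arguments. For (i), we will exploit the $1$-Lipschitz character of $\delta_E$ together with the elementary fact that $\delta_E$ grows at unit speed along the segment going from a metric projection toward $x$. For (ii), we will invoke the classical observation that $z\mapsto\norm{z}^2-\delta_E^2(z)$ is convex and translate uniqueness of the projection into the statement that its convex subdifferential at $x$ is a singleton.

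For (i): since $E$ is closed and $x\notin E$ we have $\delta_E(x)>0$, and a minimizing sequence has a convergent subsequence by compactness, producing some $y_0\in E$ with $\norm{x-y_0}=\delta_E(x)$. Set $v:=(x-y_0)/\delta_E(x)\in\Sn$. Using $y_0$ as a competitor gives $\delta_E(x-tv)\le\delta_E(x)-t$ for $t\in[0,\delta_E(x)]$, while the reverse inequality follows from the $1$-Lipschitz property, so $\delta_E(x-tv)=\delta_E(x)-t$ identically on that interval. Hence the one-sided directional derivative of $\delta_E$ at $x$ in the direction $-v$ equals $-1$; the hypothesis that $\delta_E$ is differentiable at $x$ then forces $\ps{\nabla\delta_E(x)}{v}=1$. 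Since $\abs{v}=1$ and $\abs{\nabla\delta_E(x)}\le1$ (the latter from $1$-Lipschitzianity), the equality case of Cauchy--Schwarz yields $\nabla\delta_E(x)=v=(x-y_0)/\delta_E(x)$. Applying the same argument to any other minimizer $y_1$ would force $(x-y_1)/\delta_E(x)=\nabla\delta_E(x)$, hence $y_1=y_0$; thus $x\in{\rm Unp}(E)$, $\pi_E(x)=y_0$, and the stated identity holds.

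For (ii), write
$$
g(z):=\norm{z}^2-\delta_E^2(z)=\sup_{y\in E}\bigl(2\ps{z}{y}-\norm{y}^2\bigr),
$$
which is convex on $\Rn$ as a supremum of affine functions. Letting $y_0=\pi_E(x)$, the affine minorant indexed by $y_0$ agrees with $g$ at $z=x$, so $2y_0$ belongs to the convex subdifferential $\partial g(x)$. The plan is to show $\partial g(x)=\{2y_0\}$ by proving that the one-sided directional derivative of $g$ at $x$ in every direction $w$ equals $2\ps{w}{y_0}$. The upper bound $\delta_E^2(x+\varepsilon w)\le\norm{x+\varepsilon w-y_0}^2=\delta_E^2(x)+2\varepsilon\ps{w}{x-y_0}+\varepsilon^2\norm{w}^2$ supplies one of the two inequalities on $g(x+\varepsilon w)-g(x)$. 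For the matching lower bound, choose almost-minimizers $y_\varepsilon\in E$ for $\delta_E(x+\varepsilon w)$; they are bounded, and any subsequential limit as $\varepsilon\to0^+$ is a minimizer for $\delta_E(x)$, hence equals $y_0$ by uniqueness, so in fact $y_\varepsilon\to y_0$. Inserting this into $\norm{x+\varepsilon w-y_\varepsilon}^2=\delta_E^2(x)+2\varepsilon\ps{w}{x-y_0}+o(\varepsilon)$ yields the required lower bound. Hence $g$ is G\^ateaux differentiable at $x$; being convex on $\Rn$ it is in fact Fr\'echet differentiable there, with $\nabla g(x)=2y_0$. Since $\delta_E(x)>0$, the chain rule transfers differentiability to $\delta_E$ with $\nabla\delta_E(x)=(x-\pi_E(x))/\delta_E(x)\in\Sn$; the identity \eqref{147} and uniqueness of the unit vector satisfying it are immediate consequences.

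The main obstacle will be the lower bound in (ii), where uniqueness of the projection is used essentially through the convergence $y_\varepsilon\to\pi_E(x)$ of almost-minimizers; the remainder is bookkeeping with quadratic expansions, the $1$-Lipschitz property of $\delta_E$, and standard convex analysis on $\Rn$.
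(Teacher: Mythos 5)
Your argument is correct. Note that the paper does not actually prove this proposition: it only cites \cite{fed_1959} for (i) and \cite{hor_2007,hor_2003} for (ii), so any comparison is with those references rather than with an in-paper argument. Your proof of (i) is essentially Federer's: existence of a minimizer $y_0$ by closedness, the identity $\delta_E(x-tv)=\delta_E(x)-t$ along the segment toward $y_0$, and the equality case of Cauchy--Schwarz combined with $\norm{\nabla\delta_E(x)}\le 1$ to pin down the gradient and force uniqueness of the minimizer. Your proof of (ii) via the convexity of $g(z)=\norm{z}^2-\delta_E^2(z)$ and the collapse of $\partial g(x)$ to $\{2\pi_E(x)\}$ is the standard H\"ormander/Asplund route, so both halves are faithful self-contained versions of what the paper delegates to the literature. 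One small imprecision: in the lower bound for (ii) you assert the exact expansion $\norm{x+\varepsilon w-y_\varepsilon}^2=\delta_E^2(x)+2\varepsilon\ps{w}{x-y_0}+o(\varepsilon)$, but a priori you only control $\norm{x-y_\varepsilon}^2-\delta_E^2(x)=O(\varepsilon)$; what you actually need (and have) is the one-sided inequality $\norm{x-y_\varepsilon}^2\ge\delta_E^2(x)$ together with $y_\varepsilon\to y_0$, which already gives $\delta_E^2(x+\varepsilon w)\ge\delta_E^2(x)+2\varepsilon\ps{w}{x-y_0}+o(\varepsilon)$ and hence the matching upper bound on $g(x+\varepsilon w)-g(x)$. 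With that phrasing adjusted, the proof is complete; the passage from G\^ateaux to Fr\'echet differentiability for finite convex functions on $\Rn$ and the final chain rule (legitimate since $\delta_E(x)>0$ because $x\notin E$) are standard.
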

    \begin{proof}
    (i) is proven in \cite[Theorem 4.8 (3)]{fed_1959} . (ii) can be found in \cite[Lemma 2.1.29]{hor_2007} and \cite[Lemma 8.5.12]{hor_2003}.
    \end{proof}

 \begin{definition}\label{coninormalietangenti} Let $E\subseteq\Rn$ and $x\in E$.
    \begin{enumerate} [\rm(i)]
      \item If $x$ is an isolated point of $E$ then ${\rm Tan}(E,x):=\{0\}.$ If $x$ is an accumulation point of $E$, we define as the tangent cone to $E$ at $x$ as the set 
        \begin{align}\nonumber
          {\rm Tan}(E,x):=&\   \biggl\{\lambda v\ \bigg|\ \lambda \geq0\ \mbox{and}\ \ \exists \ \{x_h\}_{h\in\mathbb{N}}\subseteq E\ \ \mbox{s.t.}\ \ x_h\longrightarrow x,\ x_h\neq x
          \\ &\mbox{and}\ v=\lim_{h\rightarrow+\infty}\frac{x_h-x}{\norm{x_h-x}}\ \biggl\}.\nonumber
                  \end{align}

      \item The normal cone to $E$ at $x$ is the dual cone of ${\rm Tan}(E,x)$, namely
        \begin{equation}
         {\rm Nor}(E,x):={\rm Dual}({\rm Tan}(E,x)):=\Big\{v\in\Rn\ |\ \ps{u}{v}\leq0\ \forall u\in{\rm Tan}(E,x)\Big\}. \nonumber
        \end{equation}
    \item          If $x$ is an isolated point of $E$ then ${\rm Tan}^{\musFlat}(E,x):=\{0\}.$ If $x$ is an accumulation point of $E$, we define as the adjacent cone to $E$ at $x$ as the set 
        \begin{align}\nonumber
          {\rm Tan}^{\musFlat}(E,x):=&\biggl\{v\in\Rn\ \bigg|\ \forall\ \{\lambda_h\}_{h\in\N}\subseteq(0,+\infty),\ \lambda_h\to 0^+\\
         & \exists\ \{x_h\}_{h\in\mathbb{N}}\subseteq E,\ \ \mbox{s.t.}\ \ x_h\longrightarrow x\ \ \mbox{and}\ \ v=\lim_{h\rightarrow+\infty}\frac{x_h-x}{\lambda_h}\biggl\}. \nonumber
        \end{align}

    \end{enumerate}   
  \end{definition}
Without any assumption on the boundary of $E$, many definition of tangent cone may be state. We refer to \cite[Chap. 4]{aub_fra_2009} and its bibliography for these topics. In particular it can be shown that
\begin{align*}
&{\rm Tan}(E,x)=\left\{v\in\Rn\ \bigg|\ \liminf_{s\rightarrow0^+}\frac{\delta_E(x+sv)}{s}=0\right\},\\
&{\rm Tan}^{\musFlat}(E,x)=\left\{v\in\Rn\ \bigg|\ \lim_{s\rightarrow0^+}\frac{\delta_E(x+sv)}{s}=0\right\}.
\end{align*}
Unfortunately, ${\rm Tan}(E,x)$ may fail to be convex if we have no extra hypothesis on $E$. Hence in general ${\rm Tan}(E,x)$ is a proper subset of ${\rm Dual}({\rm Nor}(E,x))$. However sets with positive reach enjoys more regularity property and we have the following statements.
     \begin{proposition}\label{P2} Let $E\subseteq\Rn$ be closed and non-empty. Let $x\in E$ and let $0<r<{\rm reach}(E,x)$. 
   \begin{enumerate}[\rm(i)]
      \item 
          ${\rm Nor}(E,x)=\Big\{\lambda v\in\Rn\ \big|\ \lambda\geq0,\ \norm{v}=r,\ \pi_E(x+v)=x\Big\}$.
       
      \item 
          ${\rm Tan}(E,x)={\rm Dual}({\rm Nor}(E,x))$.
       
      \item
           $\displaystyle{\lim_{s\rightarrow0^+}\frac{\delta_E(x+su)}{s}=0,\ \ \mbox{for any}\ u\in{\rm Tan}(E,x)}$.
           \item ${\rm Tan}(E,x)={\rm Tan}^{\musFlat}(E,x)$.
     
     \end{enumerate}
  \end{proposition}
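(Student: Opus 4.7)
The approach is to establish the four items sequentially, using the ball characterization of positive reach throughout. Fix $x \in E$ and $0 < r < {\rm reach}(E,x)$, so that $B_r(x) \subseteq {\rm Unp}(E)$.

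For (i), the direction ``$\supseteq$'' is a direct calculation: take $v$ with $\norm{v}=r$ and $\pi_E(x+v)=x$. Any $u \in {\rm Tan}(E,x)$ arises as $u = \lambda\lim_h (x_h-x)/\norm{x_h-x}$ along a sequence $x_h \in E \setminus \{x\}$ with $x_h \to x$; the inequality $\norm{x_h-(x+v)}^2 \geq r^2$ expands to $\langle x_h - x,v\rangle \leq \tfrac{1}{2}\norm{x_h-x}^2$, and dividing by $\norm{x_h-x}$ before passing to the limit gives $\langle u, v\rangle \leq 0$. For ``$\subseteq$'', given $v \in {\rm Nor}(E,x)$ with $\norm{v}=r$, I would use continuity of $\pi_E$ on ${\rm Unp}(E)$ (Lemma 4.1 of \cite{rat_zah_2019}) along the segment $t\mapsto x+tv$, $t \in [0,1]$: if $\pi_E(x+v)=z\neq x$, then setting $y_t:=\pi_E(x+tv)$ and testing with $x \in E$ yields $\norm{y_t-x}^2\leq 2t\langle v,y_t-x\rangle$, which together with $\langle v, u\rangle \leq 0$ for $u \in {\rm Tan}(E,x)$ (applied to subsequential limits of $(y_t-x)/\norm{y_t-x}$ as $t$ approaches the first crossing time) forces the path $y_t$ to remain at $x$, contradicting $y_1=z\neq x$.

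Items (ii) and (iii) are obtained from a single blow-up argument. The inclusion ${\rm Tan}(E,x) \subseteq {\rm Dual}({\rm Nor}(E,x))$ is immediate from ${\rm Nor}(E,x) = {\rm Dual}({\rm Tan}(E,x))$. Suppose toward a contradiction that there is a unit vector $u$ either in ${\rm Dual}({\rm Nor}(E,x)) \setminus {\rm Tan}(E,x)$ or in ${\rm Tan}(E,x)$ with $\limsup_{s\to 0^+}\delta_E(x+su)/s > 0$; using the standard characterization ${\rm Tan}(E,x) = \{w : \liminf_{s\to 0^+}\delta_E(x+sw)/s = 0\}$, in either case one extracts $s_k\to 0^+$ with $\delta_E(x+s_ku)/s_k \geq c > 0$. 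Let $z_k := \pi_E(x+s_ku)$ and $v_k := (x+s_ku-z_k)/\delta_E(x+s_ku)$; part (i) applied at $z_k$ ensures $v_k \in {\rm Nor}(E,z_k)$. Since $\norm{z_k-x}\leq 2s_k \to 0$, we have $z_k\to x$, and upper semi-continuity of the normal cone for positive-reach sets yields, along a subsequence, $v_k\to v_\infty\in {\rm Nor}(E,x)$ with $\norm{v_\infty}=1$. The ball condition from (i) at $z_k$ gives $\norm{x-(z_k+rv_k)}\geq r$, which expands to $\langle z_k-x,v_k\rangle\geq -\norm{z_k-x}^2/(2r)$. Taking the inner product of $s_ku = (z_k-x)+\delta_E(x+s_ku)v_k$ with $v_k$ and dividing by $s_k$ yields
\[
\langle u, v_k\rangle \;\geq\; -\frac{\norm{z_k-x}^2}{2rs_k} + \frac{\delta_E(x+s_ku)}{s_k},
\]
and since $\norm{z_k-x}^2/s_k \leq 4s_k \to 0$ while $\delta_E(x+s_ku)/s_k \geq c$, one concludes $\langle u, v_\infty\rangle \geq c > 0$, contradicting $u\in {\rm Dual}({\rm Nor}(E,x))$ (direct in (ii), via (ii) in (iii)). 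This simultaneously yields (ii) and (iii); (iv) then follows at once since ${\rm Tan}^{\musFlat}(E,x) \subseteq {\rm Tan}(E,x)$ holds for any set.

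The main obstacle is the upper semi-continuity of the normal cone used to pass $v_k\to v_\infty \in {\rm Nor}(E,x)$: this is a nontrivial consequence of positive reach (via the Lipschitz behaviour of $\pi_E$ on $\{\delta_E < r\}$), and I would cite it from \cite{rat_zah_2019} rather than reprove it. A secondary technical point is the choice of the ball at $z_k$ in direction $v_k$ (rather than at $x$ in direction $v_\infty$): only this choice produces the needed lower bound on $\langle z_k - x, v_k\rangle$ that survives passing to the limit and delivers strict positivity of $\langle u, v_\infty\rangle$.
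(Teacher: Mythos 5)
The paper offers no proof of this proposition at all---it cites Federer's Theorem 4.8(12) for (i)--(iii) and derives (iv) from (iii)---so the comparison is with the standard argument. Your architecture is the right one (easy inclusion of the proximal cone into ${\rm Nor}(E,x)$; a blow-up at $z_k=\pi_E(x+s_ku)$ with a uniform exterior-ball estimate handling (ii) and (iii) at once; (iv) for free), and the computation in the blow-up step is correct as far as it goes. But there is a genuine gap at the load-bearing point: the inclusion ``$\subseteq$'' in (i). Your first-crossing-time argument does not close. At the critical time, for subsequential limits $u$ of $(y_t-x)/\norm{y_t-x}$ you have $\ps{v}{u}\leq 0$ because $v\in{\rm Nor}(E,x)$, while the inequality $0<\norm{y_t-x}^2\leq 2t\ps{v}{y_t-x}$ only yields $\ps{v}{u}\geq 0$ in the limit; these are compatible ($\ps{v}{u}=0$) and no contradiction results. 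Note that this step uses positive reach only through continuity of $\pi_E$, and for general closed sets the statement is false: for the graph of $s\mapsto-\abs{s}^{3/2}$ in $\R^2$ at the origin, the downward vertical direction lies in ${\rm Dual}({\rm Tan}(E,0))={\rm Nor}(E,0)$ but is not a proximal normal at any scale. A quantitative input is indispensable here---Federer's Theorem 4.8(6)--(7) (local Lipschitz bound for $\pi_E$ and constancy of $\pi_E$ along normal rays), equivalently the implication (i)$\Rightarrow$(ii)$\Rightarrow$(iii) of the paper's Lemma \ref{zahle2}.

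The gap then propagates into (ii)--(iii): the key estimate $\norm{x-(z_k+rv_k)}\geq r$, with radius $r$ bounded away from zero, is exactly the hard direction of (i) applied at $z_k$ (a proximal normal realized at the small distance $d_k=\delta_E(x+s_ku)$ must be realized at distance $r$). Without it one only gets the exterior ball of radius $d_k$, and the error term becomes $\norm{z_k-x}^2/(2d_ks_k)\leq 2s_k/d_k\leq 2/c$, which does not vanish, so the contradiction $\ps{u}{v_\infty}\geq c>0$ is lost. The repair is to invoke Lemma \ref{zahle2} (or Federer 4.8(7)) at $z_k$ instead of your part (i), and to recover (i)``$\subseteq$'' afterwards by bipolarity from (ii)--(iii) together with closedness and convexity of the proximal cone---or simply to cite Federer, as the paper does. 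The upper semicontinuity of the normal cones, which you correctly flag and outsource, is the smaller of the two debts.
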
  
  \begin{proof}
  (i), (ii) and (iii) are proven in \cite[Theorem 4.8. (12)]{fed_1959}. (iv) follows by (iii).
  \end{proof}  
  The following lemmas list some crucial geometric properties of sets with positive reach.
\begin{lemma}\textup{\cite[Lemma 4.5]{rat_zah_2019}}\label{zahle2}
  Let $E\subseteq\Rn$, $x\in E$ and $R={\rm reach}(E,x)>0$. Let $v\in\Sn$, then the following statements are equivalent.
  \begin{enumerate}[\rm(i)]
    \item There exists $\displaystyle{s\in(0,R)}$ such that $\pi_E(x+sv)=x$.
    \item $\pi_E(x+sv)=x$ for all $\displaystyle{s\in(0,R)}$.
    \item $\displaystyle{E\cap B_{R}(x+Rv)=\emptyset}$.
    \item $v\in{\rm Nor}(E,x)$.
     \end{enumerate}  

 \end{lemma}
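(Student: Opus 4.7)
The plan is to organize the proof around the cycle $(ii) \Rightarrow (i) \Rightarrow (iv) \Rightarrow (ii)$, which follows almost immediately from Proposition~\ref{P2}(i), together with the direct equivalence $(ii) \Leftrightarrow (iii)$. The single algebraic identity that drives everything is
\[
\norm{y-(x+sv)}^2 - s^2 \;=\; \norm{y-x}^2 - 2s\,\ps{y-x}{v},\qquad y\in\Rn,\ s\geq 0,
\]
which is linear in $s$; consequently, any comparison between $\norm{y-(x+sv)}$ and $s$ translates into a sharp sign condition on $\ps{y-x}{v}$ that is monotone in $s$.

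The implication $(ii)\Rightarrow(i)$ is trivial. For $(i)\Rightarrow(iv)$, I would fix the $s\in(0,R)$ given by $(i)$ and apply Proposition~\ref{P2}(i) with $r=s$: since $\norm{sv}=s$ and $\pi_E(x+sv)=x$, we obtain $sv\in{\rm Nor}(E,x)$, and since the normal cone is a cone, $v\in{\rm Nor}(E,x)$. For $(iv)\Rightarrow(ii)$, fix any $s\in(0,R)$ and apply Proposition~\ref{P2}(i) again with $r=s$: every nonzero element of ${\rm Nor}(E,x)$ has the form $\lambda u$ with $\lambda\geq 0$, $\norm{u}=s$, and $\pi_E(x+u)=x$. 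Writing $sv=\lambda u$ and comparing norms forces $\lambda=1$ and $u=sv$, so $\pi_E(x+sv)=x$ for every $s\in(0,R)$.

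The substance of the lemma is therefore $(ii)\Leftrightarrow(iii)$. For $(ii)\Rightarrow(iii)$, given $y\in E$, the identity together with $\norm{y-(x+sv)}\geq s$ (valid for every $s\in(0,R)$) yields $\ps{y-x}{v}\leq \norm{y-x}^2/(2s)$; letting $s\nearrow R$ and plugging back in the identity at $s=R$ gives $\norm{y-(x+Rv)}\geq R$, hence $y\notin B_R(x+Rv)$. For $(iii)\Rightarrow(ii)$, fix $s\in(0,R)$; since ${\rm reach}(E,x)=R$, there exists $s'\in(s,R)$ with $B_{s'}(x)\subseteq{\rm Unp}(E)$, so $x+sv\in{\rm Unp}(E)$ admits a unique projection $z=\pi_E(x+sv)$ with $\norm{z-(x+sv)}\leq s$. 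This last inequality gives $\ps{z-x}{v}\geq\norm{z-x}^2/(2s)$, while applying $(iii)$ to $z\in E$ gives $\ps{z-x}{v}\leq\norm{z-x}^2/(2R)$; since $s<R$, both can coexist only if $\norm{z-x}=0$, i.e., $z=x$.

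The main obstacle is essentially absent: once Proposition~\ref{P2}(i) is granted, the argument is a bookkeeping exercise built on the single linear-in-$s$ distance identity. The only subtlety is that the supremum in the definition of reach is not necessarily attained, so $B_R(x)$ itself need not lie in ${\rm Unp}(E)$; this is why in $(iii)\Rightarrow(ii)$ one must insert an auxiliary radius $s'\in(s,R)$ to place $x+sv$ inside ${\rm Unp}(E)$.
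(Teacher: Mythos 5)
Your proposal is correct. Note that the paper itself offers no proof of this lemma: it is imported verbatim as \cite[Lemma 4.5]{rat_zah_2019}, so there is no internal argument to compare against. Your derivation of the cycle (ii)$\Rightarrow$(i)$\Rightarrow$(iv)$\Rightarrow$(ii) from Proposition~\ref{P2}(i) is a clean bookkeeping exercise and is valid within the paper's framework, where Proposition~\ref{P2} is itself a black box quoted from Federer; the only caveat is that in the original sources the characterization ${\rm Nor}(E,x)=\{\lambda v:\lambda\geq 0,\ \norm{v}=r,\ \pi_E(x+v)=x\}$ is established essentially alongside (or after) the present lemma, so your route would be circular if one were building the theory from scratch --- but as a proof relative to the results the paper takes as given, it is legitimate. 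The substantive part, (ii)$\Leftrightarrow$(iii), is argued correctly and independently: the affine-in-$s$ identity $\norm{y-(x+sv)}^2-s^2=\norm{y-x}^2-2s\ps{y-x}{v}$ converts both directions into sign comparisons, the passage $s\nearrow R$ handles the open ball in (iii), and the auxiliary radius $s'\in(s,R)$ correctly addresses the fact that the supremum defining ${\rm reach}(E,x)$ need not be attained. I see no gap.
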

  
 \begin{lemma}\textup{\cite[Corollary 4.12]{rat_zah_2019}}\label{cor4.12} Let $E\subseteq\Rn$ be a set with positive reach.
 \begin{enumerate}[\rm (i)]
 \item If $x\in\partial E$ then ${\rm Nor}(E,x)\neq \{0\}$.
 \item If $u\in\Sn$ belongs to $\mathring{\overbrace{{\rm Tan}(E,x_0)}}$ then for some $\varepsilon>0$ the segment $[x,x+\varepsilon u]$ is included in $E$.
 \end{enumerate}
 \end{lemma}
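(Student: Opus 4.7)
The plan for (i) is to build a nonzero element of ${\rm Nor}(E,x)$ as a limit of directions along projections from points just outside $E$. Since $x\in\partial E$ and $E$ is closed (as ${\rm reach}(E)>0$), pick $y_h\notin E$ with $y_h\to x$. Once $\|y_h-x\|<{\rm reach}(E,x)$, Proposition \ref{prop_fed59}(i) makes $z_h:=\pi_E(y_h)$ well defined, and by continuity of $\pi_E$ we have $z_h\to x$. The inclusion $B_R(z_h)\subseteq B_{{\rm reach}(E,x)}(x)\subseteq{\rm Unp}(E)$ for any fixed $R<{\rm reach}(E,x)$ and all large $h$ guarantees ${\rm reach}(E,z_h)\geq R$ eventually. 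The unit vectors $v_h:=(y_h-z_h)/\|y_h-z_h\|$ lie in ${\rm Nor}(E,z_h)$, which by Lemma \ref{zahle2} means $E\cap B_R(z_h+Rv_h)=\emptyset$. Extracting a subsequence with $v_h\to v\in\Sn$, any interior point $w$ of $B_R(x+Rv)$ lies in $B_R(z_h+Rv_h)$ for large $h$ and hence outside $E$; so $E\cap B_R(x+Rv)=\emptyset$ and Lemma \ref{zahle2} gives $v\in{\rm Nor}(E,x)$, with $\|v\|=1$.

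For (ii), I argue by contradiction: suppose there is $s_n\downarrow 0$ with $y_n:=x+s_n u\notin E$. Since $u$ belongs to the interior of ${\rm Tan}(E,x)$, in particular $u\in{\rm Tan}(E,x)$, and Proposition \ref{P2}(iii) yields $\varepsilon_n:=\delta_E(y_n)=o(s_n)$. Setting $z_n:=\pi_E(y_n)$ and $v_n:=(y_n-z_n)/\|y_n-z_n\|$, one has $x-z_n=-s_nu+\varepsilon_n v_n$ and $v_n\in{\rm Nor}(E,z_n)$ by Lemma \ref{zahle2}. Repeating the ball-limit argument of (i) produces, along a subsequence, a unit vector $v\in{\rm Nor}(E,x)$ with $v_n\to v$. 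Now fix $R$ so that $E\cap B_R(z_n+Rv_n)=\emptyset$ for all large $n$; since $x\in\partial E\subseteq E$ this forces $\|x-z_n-Rv_n\|\geq R$, and expanding the square with $\|v_n\|=1$ gives
\[
\|x-z_n-Rv_n\|^2=R^2+s_n^2+2Rs_n\langle u,v_n\rangle+o(s_n).
\]
Subtracting $R^2$, dividing by $s_n$, and letting $n\to\infty$ yields $\langle u,v\rangle\geq 0$. On the other hand, since $u$ is interior to ${\rm Tan}(E,x)$ and $\|v\|=1$, there is $\delta>0$ with $u+\tfrac{\delta}{2}v\in{\rm Tan}(E,x)$; pairing against $v\in{\rm Nor}(E,x)={\rm Dual}({\rm Tan}(E,x))$ gives $\langle v,u\rangle\leq-\tfrac{\delta}{2}<0$, a contradiction. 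Hence no such sequence exists, so $x+su\in E$ for all sufficiently small $s>0$, and the segment $[x,x+\varepsilon u]$ lies in $E$ for some $\varepsilon>0$.

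The main obstacle is the bookkeeping in (ii): one needs the remainder $\varepsilon_n=\delta_E(y_n)$ to be genuinely $o(s_n)$, not merely $O(s_n)$, for the $o(s_n)$ terms in the expansion of $\|x-z_n-Rv_n\|^2$ to vanish after division by $s_n$. This is exactly the content of Proposition \ref{P2}(iii) and is a genuine use of positive reach (without which the tangent cone need not coincide with the adjacent cone, and directions in ${\rm Tan}(E,x)$ need not be approached at speed $o(s)$). Everything else is a routine transfer, via Lemma \ref{zahle2}, of the empty osculating-ball condition from nearby boundary points $z_h$ to the limiting point $x$.
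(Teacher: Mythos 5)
The paper gives no proof of this lemma: it is quoted directly from \cite[Corollary 4.12]{rat_zah_2019}, so there is no internal argument to compare against. Your proposal is a correct, self-contained derivation from the tools the paper does record: part (i) transports normals from the nearby projection points $z_h=\pi_E(y_h)$ to $x$ via the empty osculating-ball characterization and a compactness/limit argument, and part (ii) combines that with the quantitative tangency $\delta_E(x+su)=o(s)$ of Proposition \ref{P2}(iii) to force $\ps{u}{v}\geq 0$ for some unit $v\in{\rm Nor}(E,x)$, contradicting $\ps{u+\tfrac{\delta}{2}v}{v}\leq 0$ for $u$ interior to the tangent cone. The identification of $o(s_n)$ (rather than $O(s_n)$) as the crux of (ii) is exactly right, and the expansion $\norm{x-z_n-Rv_n}^2=R^2+s_n^2+2Rs_n\ps{u}{v_n}+o(s_n)$ checks out. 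Two small points of care, neither a gap: (a) the well-definedness of $\pi_E(y_h)$ for $\norm{y_h-x}<{\rm reach}(E,x)$ comes from the definition of reach (namely $B_r(x)\subseteq{\rm Unp}(E)$), not from Proposition \ref{prop_fed59}(i), which concerns differentiability of $\delta_E$; (b) Lemma \ref{zahle2} is stated with $R$ equal to the reach at the point in question, so when you work with a fixed $R<{\rm reach}(E,z_h)$ you should observe that $B_R(z_h+Rv_h)$ sits inside the larger osculating ball, and, at the end of (i), that $E\cap B_R(x+Rv)=\emptyset$ with $R<{\rm reach}(E,x)$ still yields $\pi_E(x+sv)=x$ for small $s>0$ and hence $v\in{\rm Nor}(E,x)$ by Proposition \ref{P2}(i).
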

 
  \begin{figure}[htbp]
  \centering
  \mbox{%
    \begin{minipage}[l]{.4\textwidth}
      \begin{tikzpicture}[font=\small]
    \fill[red!70] (0,2)--(0,0)--(4,0)--(4,1)..controls(3.5,1.2) and (0.12,0.1)..(0,2);
\draw (0,2)--(0,0)--(4,0)--(4,1)..controls(3.5,1.2) and (0.12,0.1)..(0,2) node[pos=-0.3, above] {$E$};

\draw (1,2) circle [radius=1];
\draw[->] (0,2)--(1,2) node[pos=0,left] {$x$};

\node[right] at (1,3.3) {$B_R(x+R\nu)$};

  \end{tikzpicture}
    \end{minipage}%
    \qquad
    \begin{minipage}[r]{.4\textwidth}
      \caption{{\small In this picture we show the behaviour of a set with positive reach $E$ as shown in Lemma \ref{cor4.12}(iii).}}
    \end{minipage}
   }
\end{figure}
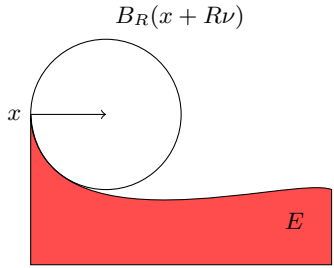

We state now the main results on curvature measures of a set with positive reach that were introduced by Federer in \cite{fed_1959}. Here we follow the approach of Rataj and Z\"ahle developed in \cite{zah_1986,rat_zah_2019}.

Let $E\subseteq\Rn$. The unit normal bundle of $E$ is the set
\[
 {\rm nor}^\star(E)=\{(x,\nu)\in\mathbb{R}^n\times\mathbb{S}^{n-1} \ \mbox{s.t.}\ x\in\partial E\ \mbox{and}\ \nu\in{\rm Nor}(E,x)\}.
 \]
Moreover we say that $(x,\nu)\in{\rm nor}^\star(E)$ is regular if $\nabla\delta_E$ is differentiable at $x+r\nu.$

Clearly ${\rm nor}^\star(E)$ is closed and ${\rm nor}^\star(E)$ is countably $\mathcal{H}^{n-1}$-rectifiable (see \cite[Corollary 4.22]{rat_zah_2019}). As proved in \cite[Proposition 4.23]{rat_zah_2019},
if ${\rm reach}(E)>0$ and $(x,\nu)\in{\rm nor}^\star(E)$ be a regular point, the generalized principal curvature of $E$ at $(x,\nu)$ are well defined $\mathcal{H}^{n-1}$-almost every $(x,\nu)\in{\rm nor}^\star(E)$ and denoted by $k_1(x,\nu),...,k_{n-1}(x,\nu)$. Moreover, such curvatures belong to $\left[-\frac{1}{{\rm reach}(E)},+\infty\right]$ and the functions
  \begin{align}\label{sigma0}
  \sigma_E^0(x,\nu)&=\frac{1}{\displaystyle{\prod_{i=1}^{n-1}\sqrt{1+k_i^2(x,\nu)}}},\\ 
  \sigma_E^k(x,\nu)&=\frac{\displaystyle{\sum_{1\leq i_1<...<i_k\leq n-1}\ \prod_{j=1}^k k_{i_j}(x,\nu)}}{\displaystyle{\prod_{i=1}^{n-1}\sqrt{1+k_i^2(x,\nu)}}},\ \ k\in\{1,...,n-1\},\label{sigmak}
  \end{align}
   are called generalized symmetric functions of the principal curvatures of order $k$ of $E$ at $(x,\nu)$. Since some of the generalized principal curvature could be equal to $+\infty$ we set these algebraic rules: 
\[
   \frac{1}{\sqrt{1+\infty^2}}=0,\ \ \frac{+\infty}{\sqrt{1+\infty^2}}=1.\]

We state here one of the main tools of this theory. By \cite[Theorem 4.30]{rat_zah_2019} and the standard approximation with simple functions we have the following result.

\begin{theorem} \textup{\textbf{(Global Steiner formula)}}\label{gsf} Let $E\subseteq\Rn$ be bounded and $0<r<{\rm reach}(E)$ then  for every non-negative measurable function $h:[E]_r\smallsetminus E\longrightarrow\R$
\begin{equation} \label{change}
\int_{[E]_r\sm E} h(z)\,d z=\sum_{k=0}^{n-1}\int_{{\rm nor}^\star(E)}\int_0^r h(y+s\nu)\,\sigma^k_E(x,\nu)s^k\,ds\,d\mathcal{H}^{n-1}(x,\nu).
\end{equation}
In particular the functions $\sigma^k_E$ are locally $\mathcal{H}^{n-1}$-integrable (and hence $\mathcal{H}^{n-1}$-measusrable) for all $k\in\{0,...,n-1\}$.
Moreover
\begin{equation} \label{steiner}
\mathcal{L}^n([E]_r\sm  E)=\sum_{k=0}^{n-1}\frac{r^{n-k}}{n-k}\int_{{\rm nor}^\star(E)}\sigma_E^{n-1-k}(x,\nu)d\,\mathcal{H}^{n-1}(x,\nu).
\end{equation}
\end{theorem}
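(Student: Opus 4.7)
The plan is to realize $[E]_r\setminus E$ as the image of a nice parametrization and apply the area formula. Define the "normal flow" map
\[
F:{\rm nor}^\star(E)\times(0,r)\longrightarrow [E]_r\sm E,\qquad F(x,\nu,s)=x+s\nu.
\]
First I would check that $F$ is a bijection. Surjectivity follows from Proposition \ref{prop_fed59}(ii): any $z\in[E]_r\sm E$ is in ${\rm Unp}(E)$ (since $r<{\rm reach}(E)$) and decomposes as $z=\pi_E(z)+\delta_E(z)\nabla\delta_E(z)$, with $\pi_E(z)\in\partial E$ and $\nabla\delta_E(z)\in{\rm Nor}(E,\pi_E(z))$ by Lemma \ref{zahle2}; hence $(\pi_E(z),\nabla\delta_E(z))\in{\rm nor}^\star(E)$. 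Injectivity is also a direct consequence of Lemma \ref{zahle2}, since $\pi_E(x+s\nu)=x$ for every $s\in(0,{\rm reach}(E))$ whenever $\nu\in{\rm Nor}(E,x)\cap\mathbb{S}^{n-1}$.

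The second step is to invoke the area formula for Lipschitz maps on rectifiable sets. Since ${\rm nor}^\star(E)$ is countably $\mathcal{H}^{n-1}$-rectifiable (by \cite[Corollary 4.22]{rat_zah_2019}) and $F$ is locally Lipschitz on ${\rm nor}^\star(E)\times(0,r)$, the area formula gives
\[
\int_{[E]_r\sm E}h(z)\,dz=\int_0^r\!\!\int_{{\rm nor}^\star(E)}h(x+s\nu)\,J_F(x,\nu,s)\,d\mathcal{H}^{n-1}(x,\nu)\,ds,
\]
provided I can compute the Jacobian $J_F$ at $\mathcal{H}^{n-1}$-a.e. regular $(x,\nu)$ and every $s\in(0,r)$.

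The crucial calculation is the Jacobian. At a regular point $(x,\nu)$, \cite[Proposition 4.23]{rat_zah_2019} provides principal curvature directions $e_1(x,\nu),\ldots,e_{n-1}(x,\nu)$ (an orthonormal basis of $\nu^\perp$) and generalized principal curvatures $k_i(x,\nu)\in[-1/{\rm reach}(E),+\infty]$ such that the approximate tangent space $T_{(x,\nu)}{\rm nor}^\star(E)$ is spanned by the unit vectors $\bigl(e_i,k_ie_i\bigr)/\sqrt{1+k_i^2}$, interpreted via the algebraic conventions $1/\sqrt{1+\infty^2}=0$ and $\infty/\sqrt{1+\infty^2}=1$. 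The differential of $F$ sends such a vector to $(1+sk_i)e_i/\sqrt{1+k_i^2}$ and sends $\partial_s$ to $\nu\perp e_i$, so
\[
J_F(x,\nu,s)=\prod_{i=1}^{n-1}\frac{1+sk_i(x,\nu)}{\sqrt{1+k_i^2(x,\nu)}}=\sum_{k=0}^{n-1}s^k\sigma_E^k(x,\nu),
\]
the last equality coming from expanding the product and using the definitions \eqref{sigma0}--\eqref{sigmak}. Substituting and using Fubini yields the claimed identity for any bounded measurable $h\geq0$; the general non-negative measurable case follows by monotone convergence on simple functions. Finally, taking $h\equiv 1$ and relabelling $k\mapsto n-1-k$ in the resulting sum gives the Steiner formula \eqref{steiner}.

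The main obstacle is making rigorous sense of the Jacobian when some $k_i(x,\nu)=+\infty$: the corresponding basis vector of $T_{(x,\nu)}{\rm nor}^\star(E)$ degenerates to the purely normal direction $(0,e_i)$, which $dF$ maps to $se_i$, and one must verify that the conventions above produce exactly this contribution so that $J_F$ extends continuously to the singular strata. This is where the machinery of \cite[Sec. 4]{rat_zah_2019}, and ultimately Lemma \ref{decoE}, is essential; once that is in place, the rest of the argument is a straightforward application of the area formula.
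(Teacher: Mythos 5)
Your argument is correct in outline, but it is worth pointing out that the paper does not prove this theorem at all: its ``proof'' consists of citing \cite[Theorem 4.30]{rat_zah_2019} and observing that the statement for general non-negative measurable $h$ follows from the version for indicator/simple functions by monotone approximation. What you have written is, in effect, a reconstruction of the proof of that cited theorem: the normal-flow parametrization $F(x,\nu,s)=x+s\nu$ of $[E]_r\sm E$ over ${\rm nor}^\star(E)\times(0,r)$, bijectivity via Proposition \ref{prop_fed59}(ii) and Lemma \ref{zahle2}, and the area formula with tangential Jacobian $\prod_{i}(1+sk_i)/\sqrt{1+k_i^2}=\sum_k s^k\sigma_E^k$. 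This is the standard route and the Jacobian computation is right (note that $1+sk_i\geq 1-s/{\rm reach}(E)>0$ for $s<r$, so the Jacobian is genuinely positive and the expansion into the $\sigma_E^k$ is legitimate). The honest caveat, which you yourself flag, is that the heavy lifting --- that ${\rm nor}^\star(E)$ has finite $\mathcal{H}^{n-1}$-measure, that its approximate tangent planes are spanned $\mathcal{H}^{n-1}$-a.e.\ by the vectors $(e_i,k_ie_i)/\sqrt{1+k_i^2}$ with the stated conventions at $k_i=+\infty$, and that the area formula applies on the product of this rectifiable set with an interval --- is exactly the content of \cite[Section 4]{rat_zah_2019}; so your proof is not more elementary than the citation, it just unfolds it. The trade-off is transparency versus length: your version makes visible where each factor in \eqref{sigma0}--\eqref{sigmak} comes from and why \eqref{steiner} is the $h\equiv 1$ specialization after the relabelling $k\mapsto n-1-k$, whereas the paper simply outsources all of this.
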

Hence the curvature-direction measures of order $k\in\{0,...,n-1\}$ of $E$ are well defined  and they are Radon measures 
$\widetilde{C}_k(E,\cdot)$ on $\Rn\times\Sn$ given by 
\[
\widetilde{C}_k(E,B)=\frac{1}{(n-k)\omega_{n-k}}\int_{{\rm nor}^\star(E)\cap B}\sigma_E^{n-1-k}(x,\nu)d\,\mathcal{H}^{n-1}(x,\nu),\quad B\in\mathcal{B}(\Rn\times\Sn).
\]
The curvature measure $C_k(E,\cdot)$ defined by Federer in \cite[Sect. 5]{fed_1959} are given by 
\[
C_k(E,B)=\widetilde{C}_k(E,B\times\Sn),
\] 
for all $B\in\mathcal{B}(\Rn)$ for all $k\in\{0,...,n-1\}$.
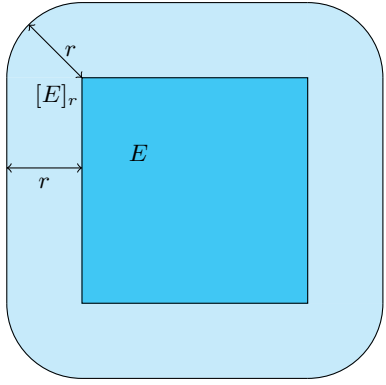
\begin{figure}[htbp]
  \centering
  \mbox{%
    \begin{minipage}{.4\textwidth}
\begin{tikzpicture}[scale=0.5, font=\small]
\filldraw[cyan! 20! white ] (-2,0) arc[start angle=180, end angle=90, radius=2]--(0,0);
\filldraw[cyan! 20! white ] (6,2) arc[start angle=90, end angle=0, radius=2]--(6,0);
\filldraw[cyan! 20! white ] (8,-6) arc[start angle=0, end angle=-90, radius=2]--(6,-6);
\filldraw[cyan! 20! white ] (0,-8) arc[start angle=-90, end angle=-180, radius=2]--(0,-6);
\filldraw[cyan! 20! white ] (-2,0)--(-2,-6)--(0,-6)--(0,0);
\filldraw[cyan! 20! white ] (0,2)--(6,2)--(6,0)--(0,0);
\filldraw[cyan! 20! white ] (8,0)--(8,-6)--(6,-6)--(6,0);
\filldraw[cyan! 20! white ] (0,-8)--(6,-8)--(6,-6)--(0,-6);
\filldraw[cyan! 60! white ] (0,0)--(0,-6)--(6,-6)--(6,0)--(0,0);
\draw (0,0)--(0,-6)--(6,-6)--(6,0)--(0,0);
\draw (-2,0)--(-2,-6);
\draw (0,2)--(6,2);
\draw (8,0)--(8,-6);
\draw (0,-8)--(6,-8);
\draw (-2,0) arc[start angle=180, end angle=90, radius=2];
\draw (6,2) arc[start angle=90, end angle=0, radius=2];
\draw (8,-6) arc[start angle=0, end angle=-90, radius=2];
\draw (0,-8) arc[start angle=-90, end angle=-180, radius=2];
\node[right] at (1,-2) {$E$};
\node[right] at (-1.5,-0.5) { $[E]_{r}$};
\draw[<->] (-2,-2.4)--(0,-2.4) node[midway,below] {$r$};
\draw[<->] (0,0)--(-1.414,1.414)node[midway,right] {$r$};
\end{tikzpicture}
    \end{minipage}%
   \begin{minipage}[r]{.35\textwidth}
     \caption{A set with positive reach and its parallel set.}
        \end{minipage}
   }
\end{figure}

\begin{remark}\label{casoregolare}
If $E$ is a compact $C^2$-domain the generalized principal curvatures agree with the classical Gaussian curvatures. 
More precisely, we consider firstly the orthogonal projection map 
\[
\pi:{\rm nor}^*(E)\longrightarrow\partial E
\]
whose Jacobian is given by
\[
(x,\nu)\longmapsto\frac{1}{\displaystyle{\prod_{i=1}^{n-1}\sqrt{1+k_i(x,\nu)^2}}}.
\] 
Appling the Area formula (\cite[Theorem 2.91]{amb_fus_pal_2000}), for all $k\in\{0,...,n-2\}$ and $B\in\mathcal{B}(\Rn)$, we get 
\begin{align*}
C_k(E,B)&=\frac{1}{(n-k)\omega_{n-k}}\int_{{\rm nor}^\star(E)\cap (B\times\Sn)}\sigma_E^{n-1-k}(x,\nu)d\,\mathcal{H}^{n-1}(x,\nu)\\
&=\frac{1}{(n-k)\omega_{n-k}}\int_{\partial E\cap B}\sum_{1\leq i_1<...<i_k\leq n-1}\ \prod_{j=1}^k k_{i_j}(x,\nu_E(x))d\,\mathcal{H}^{n-1}(x)\\
C_{n-1}(E,B)&=\frac{1}{(n-k)\omega_{n-k}}\int_{{\rm nor}^\star(E)\cap (B\times\Sn)}\sigma_E^{0}(x,\nu)d\,\mathcal{H}^{n-1}(x,\nu)\\
&=\frac{1}{(n-k)\omega_{n-k}}\mathcal{H}^{n-1}\left(\partial E\cap B\right)
\end{align*}
where the curvatures  $k_j(x):=k_j(x,\nu_E(x))$ for $j=1,...,n-1$ agrees with the principal curvatures of the smooth case.
\end{remark}

\begin{remark}
\label{sigmaInL1}
We point out that since the curvature measure are finite, their
total variations are finite, i.e. setting $\mu_k=\tilde C_k(E,\cdot)$, 
\[
|\mu_k|(\Rn\times \Sn)=
\frac{1}{(n-k)\omega_{n-k}}
\int_{{\rm nor}^\star(E)}
|\sigma_E^{n-1-k}(x,\nu)|
d\,\mathcal{H}^{n-1}(x,\nu)<+\infty,
\]
whence the fact that $\sigma_E^{n-1-k}\in L^1(\Rn\times \Sn,
\mathcal{H}^{n-1}\mr{\rm nor}^*(E))$.
\end{remark}

We end this section recalling basic results on sets with positive reach and the Hausdorff distance. We refer to \cite{amb_til_2004} for more details on the Hausdorff distance on a metric space and to \cite{fed_1959} for the properties of sets with positive reach.
   \begin{definition} Let $(X,d)$ be a metric space. 
    \begin{enumerate}[{\rm(i)}]
    \item By $\mathcal{C}_X$ we denote the set of all non-empty 
    closed subsets of $X$. 
    \item For $A, B\in\mathcal{C}_X$ we define the  
    Hausdorff distance as 
    \[
    \textup{dist}_\textup{H}(A,B)=\inf\Big\{t>0\ \big|\ 
    B\subseteq [A]_t\ \mbox{and}\ A\subseteq [B]_t\Big\}.
    \]
    \end{enumerate}
   \end{definition}
    \begin{definition} Let $(X,d)$ be a metric space. Let $C$ be a closed subset of $X$ and let $\{C_j\}_{j\in\N}\subseteq\mathcal{C}_X$. We write $C_j\xrightarrow{H}C$ if $\{C_j\}_{j\in\N}$ converges to $C$ with respect to the topology induced by the Hausdorff distance.
   \end{definition}
  \begin{definition} Let $(X,d)$ be a metric space. Let $C$ be a closed subset of $X$ and let $\{C_j\}_{j\in\N}\subseteq\mathcal{C}_X$. We say that the sequence $\{C_j\}_{j\in\N}$ converges in the sense of Kuratowski to $C$ if the following conditions are satisfied.
  \begin{enumerate}[{\rm (i)}]
    \item If $\displaystyle{x=\lim_{j\rightarrow+\infty}x_{j}}$ where $\{x_{j}\}_{j\in\N}$ is a sequence with $x_j\in C_j$ for all $j\in\N$, then $x\in C$.
    \item If $x\in C$ then there exists $\{x_j\}_{j\in\N}$ such that $x_j\in C_j$ for all $j\in\N$ such that $\displaystyle{x=\lim_{j\rightarrow+\infty}x_{j}}$.
  \end{enumerate}
  In this case we write $C_j\xrightarrow{K}C.$
   \end{definition}
     \begin{remark}\label{osshaus} We recall some known facts.
     \begin{enumerate}[{\rm (i)}]
     \item  $(\mathcal{C}_X,\textup{dist}_\textup{H})$ is a complete metric space and if $X$ is compact then $(\mathcal{C}_X,\textup{dist}_\textup{H})$ is compact (see \cite[Proposition 4.4.12, Proposition 4.4.14]{amb_til_2004}). 
     \item If $C_j\xrightarrow{H}C$ then $C_j\xrightarrow{K}C$ and if $X$ is compact the converse is also true. For a proof look at \cite[Theorem 4.4.15]{amb_til_2004}.
     \end{enumerate}
     \end{remark}

 \begin{proposition}\textup{\cite[Remark 4.14]{fed_1959}}\label{chiusocp} Let $\varepsilon>0$ and $K\subseteq\mathbb{R}^n$ be compact.
 \begin{enumerate}[{\rm(i)}]
   \item $B^{\varepsilon}=\Bigl\{ \emptyset\neq A\subseteq\mathbb{R}^n\ |\ {\rm reach}(A)\geq \varepsilon\Bigr\}$ is closed with rispect to the Hausdorff metric.
   \item $C^{\varepsilon}=\Bigl\{ \emptyset\neq A\subseteq K\ |\ {\rm reach}(A)\geq \varepsilon\Bigr\}$ is compact with rispect to the Hausdorff metric.
 \end{enumerate}
 \end{proposition}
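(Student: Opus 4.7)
The plan is to prove (i) by contradiction, exploiting Lemma \ref{zahle2}'s characterization of the normal cone via disjoint osculating balls, and then to deduce (ii) from (i) together with the Hausdorff-compactness of $\mathcal{C}_K$ from Remark \ref{osshaus}(i).

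For (i), I take $\{A_j\}_{j\in\N}\subseteq B^\varepsilon$ with $A_j\xrightarrow{H}A$, where $A$ is closed and non-empty, and assume toward a contradiction that ${\rm reach}(A)<\varepsilon$. Then there exist $y\in\Rn$ and distinct $a_1,a_2\in A$ with $\|y-a_1\|=\|y-a_2\|=d$, where $d:=\delta_A(y)<\varepsilon$. Using the Kuratowski description of $A_j\xrightarrow{H}A$ from Remark \ref{osshaus}(ii), I choose $a_i^j\in A_j$ with $a_i^j\to a_i$. Since $\delta_{A_j}(y)\leq\|y-a_1^j\|\to d<\varepsilon\leq{\rm reach}(A_j)$, for $j$ large $y\in{\rm Unp}(A_j)$ and the projection $\xi_j=\pi_{A_j}(y)$ is uniquely defined. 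The unit vector $\nu_j:=(y-\xi_j)/\|y-\xi_j\|$ lies in ${\rm Nor}(A_j,\xi_j)$, so Lemma \ref{zahle2} applied with $R_j:={\rm reach}(A_j,\xi_j)\geq\varepsilon$, combined with the elementary inclusion $B_\varepsilon(\xi_j+\varepsilon\nu_j)\subseteq B_{R_j}(\xi_j+R_j\nu_j)$ (smaller tangent balls on the same side lie inside larger ones), gives
$$B_\varepsilon(\xi_j+\varepsilon\nu_j)\cap A_j=\emptyset.$$

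Next I pass to the limit. Since $\|y-\xi_j\|=\delta_{A_j}(y)\to\delta_A(y)=d$ (distance functions converge uniformly on compacts under Hausdorff convergence), up to a subsequence $\xi_j\to\xi\in A$ with $\|y-\xi\|=d$, forcing $\xi\in\{a_1,a_2\}$; after relabelling, $\xi=a_1$. Then $\nu_j\to\nu:=(y-a_1)/d$ and the centres $\xi_j+\varepsilon\nu_j\to a_1+\varepsilon\nu$. If some $a\in A\cap B_\varepsilon(a_1+\varepsilon\nu)$ existed, Remark \ref{osshaus}(ii) would furnish $a_j'\in A_j$ with $a_j'\to a$, and eventually $a_j'\in B_\varepsilon(\xi_j+\varepsilon\nu_j)\cap A_j$, a contradiction. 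Hence $a_2\notin B_\varepsilon(a_1+\varepsilon\nu)$, i.e.\ $\|a_2-a_1-\varepsilon\nu\|\geq\varepsilon$. Squaring and using the equidistance identity $\langle y-a_1,a_2-a_1\rangle=\|a_2-a_1\|^2/2$ (a direct consequence of $\|y-a_1\|=\|y-a_2\|$) reduces the inequality to $\varepsilon\leq d$ after dividing by $\|a_2-a_1\|^2>0$, contradicting $d<\varepsilon$.

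Part (ii) is then immediate: $C^\varepsilon=B^\varepsilon\cap\mathcal{C}_K$, and $\mathcal{C}_K$ is compact in the Hausdorff metric by Remark \ref{osshaus}(i) since $K$ is compact, so (i) makes $C^\varepsilon$ a closed subset of a compact metric space, hence compact. The main obstacle I expect is the rigorous limit passage of the ball-disjointness condition: one must carefully convert Hausdorff convergence $A_j\xrightarrow{H}A$ and convergence of centres $\xi_j+\varepsilon\nu_j\to a_1+\varepsilon\nu$ at a fixed radius $\varepsilon$ into disjointness of $B_\varepsilon(a_1+\varepsilon\nu)$ from $A$; once this is in hand, the rest (existence of unique projections on $A_j$, continuity of $\delta_{A_j}$, and the closing algebraic estimate) is essentially formal.
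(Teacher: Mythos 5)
Your argument is correct, but note that the paper does not prove this proposition at all: it is quoted verbatim from Federer \cite[Remark 4.14]{fed_1959}, whose own route goes through the quantitative two\mbox{-}point characterization of reach (\cite[Theorem 4.13]{fed_1959}: ${\rm reach}(A)\geq r$ if and only if $2r\,\delta_{{\rm Tan}(A,a)}(b-a)\leq \norm{b-a}^2$ for all $a,b\in A$), a pairwise closed condition that passes to Kuratowski limits. Your proof replaces this with the osculating\mbox{-}ball characterization of Lemma \ref{zahle2} and a contradiction argument, which has the merit of using only tools already quoted in the paper; observe that your closing estimate $\norm{u}^2\geq 2\varepsilon\ps{u}{\nu}=\varepsilon\norm{u}^2/d$ is precisely Federer's two\mbox{-}point inequality in disguise, so the two approaches are close cousins rather than genuinely divergent. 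Two small points to tighten. First, the limit $\xi$ of the projections $\xi_j$ is \emph{some} nearest point of $y$ in $A$, not necessarily $a_1$ or $a_2$ (there may be more than two nearest points); the fix is just to rename $a_1:=\xi$ and choose $a_2$ to be any other nearest point, which exists because $y\notin{\rm Unp}(A)$. Second, since $R_j={\rm reach}(A_j,\xi_j)$ may equal $+\infty$, the inclusion $B_\varepsilon(\xi_j+\varepsilon\nu_j)\subseteq B_{R_j}(\xi_j+R_j\nu_j)$ is not literally meaningful in that case; it is cleaner to obtain $B_\varepsilon(\xi_j+\varepsilon\nu_j)\cap A_j=\emptyset$ directly from Lemma \ref{zahle2}(ii), which gives $\delta_{A_j}(\xi_j+s\nu_j)=s$ for every $s\in(0,\varepsilon)$, and then take the union of the balls $B_s(\xi_j+s\nu_j)$ as $s\uparrow\varepsilon$. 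Your deduction of (ii) from (i), the identity $C^\varepsilon=B^\varepsilon\cap\mathcal{C}_K$ and the compactness of $\mathcal{C}_K$ from Remark \ref{osshaus}(i) is fine.
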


 \begin{theorem}\textup{\cite[Theorem 5.9 Remark 5.10]{fed_1959}}\label{convdeb}
 Let $\{C_j\}_{j\in\N}$ be a sequence of compact subset of $\Rn$ such that:
\begin{enumerate}[{\rm(i)}]
\item there exists $\varepsilon>0$ such that 
\[
\inf_{j\in\N}{\rm reach}(C_j)\geq \varepsilon,
\]
\item there exists a compact $C\in\Rn$ such that $C_j\xrightarrow{H} C$. 
\end{enumerate}
Then for all $k\in\{0,...,n-1\}$ the curvature measure $C_k(C_j,\cdot)$ weakly converges to $C_k(C,\cdot)$ as $j\to+\infty$, namely for all $f\in C_b(\Rn)$ and $B\in\mathcal{B}(\Rn)$ we have
\[
\lim_{j\to +\infty}
\int_{B} f(y)\, dC_k(C_j,dy)=\int_{B} f(y)\, dC_k(C,dy).
\]
\end{theorem}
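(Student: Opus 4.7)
The plan is to reduce weak convergence of the curvature measures to pointwise convergence of a family of \emph{localised Steiner polynomials}, and then extract coefficient convergence via polynomial interpolation. Fix $f\in C_b(\Rn)$ and for $r\in(0,\varepsilon)$ set
\begin{equation*}
\Phi_j(r)=\int_{[C_j]_r\setminus C_j} f(\pi_{C_j}(z))\,dz,\qquad
\Phi(r)=\int_{[C]_r\setminus C} f(\pi_{C}(z))\,dz.
\end{equation*}
Applying the Global Steiner Formula (Theorem \ref{gsf}) to the function $h(z)=f(\pi_E(z))$, and using that $\pi_E(y+s\nu)=y$ for $(y,\nu)\in{\rm nor}^\star(E)$ and $s\in(0,r)$ together with the definition of $\widetilde C_k$, one rewrites
\begin{equation*}
\Phi_j(r)=\sum_{k=0}^{n-1}\omega_{k+1}\,r^{k+1}\int_{\Rn} f(x)\,dC_{n-1-k}(C_j,x),
\end{equation*}
which is a polynomial in $r$ of degree $\le n$; the analogous identity holds for $\Phi$.

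The first substantive step is to prove the pointwise convergence $\Phi_j(r)\to\Phi(r)$ for every regular value of $\delta_C$ in $(0,\varepsilon)$. Since $\delta_{C_j}$ and $\delta_C$ are $1$-Lipschitz and $C_j\xrightarrow{H}C$, one has $\delta_{C_j}\to\delta_C$ uniformly on $\Rn$, so the symmetric difference $[C_j]_r\triangle[C]_r$ shrinks to a subset of the level set $\{\delta_C=r\}$, which has Lebesgue measure zero for a.e.\ $r\in(0,\varepsilon)$. For each $z\in[C]_r\setminus C$ one has $\delta_{C_j}(z)<r<\varepsilon\le{\rm reach}(C_j)$ eventually, so $z\in{\rm Unp}(C_j)$; any cluster point $p$ of $\pi_{C_j}(z)$ belongs to $C$ (by Hausdorff convergence) and satisfies $\|p-z\|=\delta_C(z)$, so uniqueness in ${\rm Unp}(C)$ forces $p=\pi_C(z)$. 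Hence $\pi_{C_j}(z)\to\pi_C(z)$, and dominated convergence with dominant $\|f\|_\infty\mathbbm{1}_{[C]_{\varepsilon}}$ yields $\Phi_j(r)\to\Phi(r)$ along a co-null set of $r\in(0,\varepsilon)$.

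From this I would conclude by a polynomial interpolation argument: since $\Phi_j$ and $\Phi$ are polynomials of degree at most $n$ agreeing in the limit on at least $n+1$ distinct values $r_0,\dots,r_n\in(0,\varepsilon)$, the coefficients must also converge, giving
\begin{equation*}
\int_{\Rn} f(x)\,dC_k(C_j,x)\longrightarrow \int_{\Rn} f(x)\,dC_k(C,x),\qquad k=0,\dots,n-1.
\end{equation*}
This is weak convergence on $\Rn$. The extension to a general Borel set $B$ follows by a standard approximation argument, after noting that the measures $C_k(C_j,\cdot)$ have uniformly bounded total variation: they are supported in the compact set ${\rm supp}(C)$-neighborhood forced by the Hausdorff convergence, and their total masses are bounded by the parallel-volume of a common bounded enclosure, via Remark \ref{sigmaInL1} combined with Proposition \ref{chiusocp}.

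The main obstacle is the pointwise-convergence step for $\Phi_j$: one must simultaneously handle the convergence of the variable integration domain $[C_j]_r\setminus C_j$ and of the $j$-dependent integrand $f\circ\pi_{C_j}$. Restricting to regular values of $\delta_C$ removes the boundary-layer issue and reduces matters to the pointwise limit $\pi_{C_j}\to\pi_C$, which is the clean consequence of Hausdorff convergence plus the uniform reach bound. Once this is in place, the polynomial-interpolation step is a purely algebraic observation and the remainder is bookkeeping on curvature-measure definitions.
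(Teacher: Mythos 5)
The paper does not prove this statement at all: it is quoted from Federer \cite[Theorem 5.9, Remark 5.10]{fed_1959}, so there is no internal proof to compare against. Your strategy (localised Steiner polynomial in $r$, pointwise convergence of $\pi_{C_j}$ from the uniform reach bound, then Lagrange interpolation to recover the coefficients) is in fact Federer's own argument, and the steps concerning $\pi_{C_j}(z)\to\pi_C(z)$ and the interpolation are sound. However, there is a genuine gap in the step $\Phi_j(r)\to\Phi(r)$. The symmetric difference of your integration domains is
\[
\bigl([C_j]_r\setminus C_j\bigr)\,\triangle\,\bigl([C]_r\setminus C\bigr)\ \subseteq\ \bigl([C_j]_r\triangle[C]_r\bigr)\cup\bigl(C_j\triangle C\bigr),
\]
and you only control the first piece (via the level set $\{\delta_C=r\}$). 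The second piece requires $\mathcal{L}^n(C_j\triangle C)\to 0$, which does \emph{not} follow from Hausdorff convergence alone (finite $\eta_j$-nets of a ball converge in Hausdorff distance to the ball while having measure zero); with the uniform reach bound it is true, but it is precisely the content of Lemma \ref{beer}, which this paper deduces \emph{from} Theorem \ref{convdeb}. As written, your argument either silently assumes this or becomes circular. The fix is the one Federer uses: define $\Phi_j(r)=\int_{[C_j]_r}f(\pi_{C_j}(z))\,dz$ over the full closed parallel set (on which $\pi_{C_j}=\mathrm{id}$ on $C_j$). Then the integrand $\mathbbm{1}_{[C_j]_r}\,f\circ\pi_{C_j}$ converges pointwise a.e.\ to $\mathbbm{1}_{[C]_r}\,f\circ\pi_C$ with a fixed dominant, $\Phi_j$ is still a polynomial of degree $\le n$ in $r$ whose constant term is $\int_{C_j}f\,d\mathcal{L}^n$, and interpolation gives convergence of all coefficients — including the volume term, which yields convergence in measure as a by-product rather than a hidden hypothesis.

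Two smaller points. First, your appeal to Remark \ref{sigmaInL1} gives finiteness of each total variation $|\widetilde C_k(C_j,\cdot)|$ but not a bound uniform in $j$; a uniform bound needs an extra argument (e.g.\ a uniform estimate on $\mathcal{H}^{n-1}({\rm nor}^\star(C_j))$ in terms of $\varepsilon$ and a common bounded enclosure). Second, the final claim ``for all $B\in\mathcal{B}(\Rn)$'' cannot be recovered from weak convergence by approximation: setwise convergence fails already for $C_j=\overline{B_{1-1/j}(0)}\to\overline{B_1(0)}$ with $B=S_1(0)$ and $k=n-1$. That defect is inherited from the paper's own formulation of the statement, but your ``standard approximation argument'' would not repair it; the correct conclusion is weak (i.e.\ tested against $f\in C_b(\Rn)$ on all of $\Rn$, or on Borel sets $B$ with $C_k(C,\partial B)=0$).
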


\subsection{Singular points}

For a set with positive reach $E$ is relevant to split $E$ (and in particular $\partial E$) as the union of suitable Borel set whose definition is based on the  dimension of the normal cone. This kind of decomposition was introduced by Federer in \cite[Remark 4.15 and Remark 4.20]{fed_1959} and it was further studied by Hug in the context of the convex geometry (see \cite[Lemma 3.1 and Theorem 3.2]{hug_1998})  and by Kohlmann in a Riemmanian setting (see \cite{koh_1991}).
      
We consider the orthogonal projections maps $\pi_1:\Rn\times\Sn\longrightarrow\Rn$ and $\pi_2:\Rn\times\Sn\longrightarrow\Sn$ such that for all $x\in\Rn$ and $y\in\Sn$ we have 
\[
\pi_1(x,y)=x,\ \ \pi_2(x,y)=y.
\] 

\begin{definition}Let $E\subseteq\Rn$ be a set with positive reach and $k\in\{0,...,n\}$.
\begin{enumerate}[\rm (i)]
 \item The set of the $k$-singular points of $E$ is the set 
\begin{equation}
E_{k}=\Big\{x\in E\ \big|\  {\rm dim}({\rm Nor}(E,x))\geq n-k
\Big\}.
\end{equation}
\item The set of the classical ridge points of order $k$ is the set
\begin{align}
\Sigma_{0}=&E_{0}, \\
\Sigma_k=&E_k\sm E_{k-1}=\Big\{x\in E\ \big|\  {\rm dim}({\rm Nor}(E,x))= n-k
\Big\},\ \ k\in\{1,...,n\}.
\end{align}
\item The set of the $k$-singular points of the unit normal bundle of $E$ is the set
\begin{equation}
E^k=\pi_1^{-1}(E_k)\cap{\rm nor}^\star(E),\ \ k\in\{0,...,n\}.
\end{equation} 
\item The set of the $k$-ridge points of the unit normal bundle of $E$ is the set
\begin{align}
\Sigma^k=&\pi_1^{-1}(\Sigma_k)\cap{\rm nor}^\star(E),\ \ k\in\{0,...,n\}.
\end{align}
\end{enumerate}
\end{definition}

\begin{lemma}\label{decoE} Let $E\subseteq\Rn$ be a set with positive reach and $k\in\{0,...,n\}$. 
\begin{enumerate}[\rm (i)]
\item $E_k$ and $\Sigma_k$ are countably $k$-rectifiable Borel sets.
\item  $E^k$ and $\Sigma^k$ are countably $\mathcal{H}^{n-1}$-rectifiable Borel sets. 
\end{enumerate}
Moreover for $k\in\{0,...,n-1\}$, the following statements hold true.
\begin{enumerate}[\rm (iv)]
\item[\rm (iv)]For $\mathcal{H}^{n-1}$a.e. $z\in E^k$ at least $n-1-k$ of the generalized curvatures $k_1(z),...,k_{n-1}(z)$ are equal to $+\infty$.
\item[\rm (v)]  For $\mathcal{H}^{n-1}$a.e. $z\in \Sigma^k$ exactly  $n-1-k$ of the generalized curvatures $k_1(z),...,k_{n-1}(z)$ are equal to $+\infty$.
\item[\rm (vi)] $\sigma^{n-1-k}$ is non-zero on $E^{k}$ and $\widetilde{C}_k$ is concentrated on $E^k$.

\end{enumerate}
\end{lemma}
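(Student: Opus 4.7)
The plan is to prove the six items in order, leveraging the generalized Weingarten-map description of principal curvatures at regular points of $\text{nor}^\star(E)$, along with the structural results of Federer and Rataj--Zähle already invoked in the paper.

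\emph{Parts (i)--(ii): rectifiability.} The map $x\mapsto\dim(\text{Nor}(E,x))$ is Borel: the multifunction $x\mapsto\text{Nor}(E,x)$ is upper semicontinuous (Kuratowski convergence of normal cones combined with continuity of $\pi_E$ from \cite[Lemma 4.1]{rat_zah_2019}), so $\{x:\dim\text{Nor}(E,x)\geq n-k\}=E_k$ is Borel, and $\Sigma_k=E_k\setminus E_{k-1}$ likewise. Countable $k$-rectifiability of $\Sigma_k$ is the classical result \cite[Theorem 4.18]{fed_1959}: at each $x\in\Sigma_k$ the $k$-plane $L(x)=\text{span}(\text{Nor}(E,x))^\perp$ is contained in $\text{Tan}(E,x)$, and a Grassmannian covering argument combined with the uniform reach bound produces countably many Lipschitz-graph covers over $k$-planes drawn from a dense family in $G(k,n)$. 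For (ii), $\text{nor}^\star(E)$ is countably $\mathcal{H}^{n-1}$-rectifiable by \cite[Corollary 4.22]{rat_zah_2019}, and $E^k,\Sigma^k$ are Borel subsets of it via continuity of $\pi_1$, hence inherit countable $\mathcal{H}^{n-1}$-rectifiability.

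\emph{Parts (iv)--(v): counting infinite curvatures.} At a regular $(x,\nu)\in{\rm nor}^\star(E)$, the tangent space $T_{(x,\nu)}\text{nor}^\star(E)$ admits an orthonormal basis of principal directions $(a_i)_{i=1}^{n-1}$ with $a_i=(b_i,c_i)\in\mathbb{R}^n\oplus\mathbb{R}^n$, $|b_i|^2+|c_i|^2=1$, and $k_i=|c_i|/|b_i|$ (so $k_i=+\infty$ precisely when $b_i=0$). Hence the number of infinite generalized principal curvatures at $(x,\nu)$ equals $\dim\bigl(T_{(x,\nu)}\text{nor}^\star(E)\cap(\{0\}\times\mathbb{R}^n)\bigr)$. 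Such vertical tangent vectors $(0,c)$ are exactly the directions in $T_\nu\bigl(\text{Nor}(E,x)\cap\mathbb{S}^{n-1}\bigr)$; for $\mathcal{H}^{n-1}$-a.e.\ regular $(x,\nu)$ one may take $\nu$ in the relative interior of $\text{Nor}(E,x)$ (the relative boundary of the normal cone, fibered over $\Sigma_k$, is $\mathcal{H}^{n-1}$-null in $\text{nor}^\star(E)$), whence this dimension equals $\dim\text{Nor}(E,x)-1$. For $(x,\nu)\in E^k$ this is $\geq n-k-1=n-1-k$, giving (iv); for $(x,\nu)\in\Sigma^k$ equality holds, giving exactly $n-1-k$, which is (v).

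\emph{Part (vi): $\sigma^{n-1-k}$ and $\widetilde C_k$.} Plugging (v) into \eqref{sigma0}--\eqref{sigmak} with the conventions $\tfrac{+\infty}{\sqrt{1+\infty^2}}=1$, $\tfrac{1}{\sqrt{1+\infty^2}}=0$: on $\Sigma^k$ the sum defining $\sigma^{n-1-k}$ collapses to the unique term where $I$ is the set of infinite indices, yielding $\sigma^{n-1-k}(x,\nu)=\prod_{k_i<+\infty}(1+k_i^2)^{-1/2}>0$, so $\sigma^{n-1-k}$ is $\mathcal{H}^{n-1}$-a.e.\ non-zero on $E^k$. On $\Sigma^j$ with $j<k$, the count of infinite curvatures is $n-1-j>n-1-k$, so every $I$ of size $n-1-k$ omits at least one infinite index and the corresponding factor $\tfrac{1}{\sqrt{1+\infty^2}}=0$ kills the term; hence $\sigma^{n-1-k}\equiv 0$ there. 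Integrating against $\mathcal{H}^{n-1}\mr\text{nor}^\star(E)$ in the definition of $\widetilde C_k$ then yields the concentration statement.

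\emph{Main obstacle.} The subtlest point is the ``no extras'' direction of (v): ruling out, for $\mathcal{H}^{n-1}$-a.e.\ $(x,\nu)\in\Sigma^k$, the possibility that a horizontal principal direction degenerates and produces an extra $+\infty$ curvature. This requires exploiting the regular-point hypothesis (differentiability of $\nabla\delta_E$ at $x+r\nu$, as in \cite[Proposition 4.23]{rat_zah_2019}) to ensure that the generalized Weingarten map has exactly the expected number of infinite eigenvalues off an $\mathcal{H}^{n-1}$-null set.
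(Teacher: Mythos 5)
Your route is genuinely different from the paper's, which disposes of the lemma almost entirely by citation (Hug's Lemma 3.1 and Federer's Remark 4.15(3) for (i), Rataj--Z\"ahle for (ii), Kohlmann \cite[Corollary 2.11]{koh_1991} for (iv), and then the bare assertion that (iv) implies (v) and (vi)); you instead attempt a self-contained argument via the principal-direction description of the tangent spaces of ${\rm nor}^\star(E)$. Parts (i)--(ii) and the lower bound in (iv) (vertical tangent directions contain the tangent directions of the fibre $\{x\}\times({\rm Nor}(E,x)\cap\Sn)$, of dimension ${\rm dim}\,{\rm Nor}(E,x)-1$ for $\nu$ in the relative interior) are sound in outline. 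But for (v) you only describe, and do not prove, the essential ``no extra infinite curvatures'' direction: one must show that for $\mathcal{H}^{n-1}$-a.e.\ $(x,\nu)\in\Sigma^k$ the vertical part of the approximate tangent space has dimension \emph{exactly} $n-1-k$. This is precisely the content that separates (v) from (iv), and your paragraph labelled ``Main obstacle'' asserts that the regular-point hypothesis ``ensures'' it without supplying the argument. The paper outsources exactly this step to Kohlmann; without that citation or an actual proof (say, a coarea argument for $\pi_1$ restricted to $\Sigma^k$ ruling out a kernel of dimension $>n-1-k$ on a set of positive measure), (v) is not established.

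Your argument for (vi) proves the wrong inclusion. You correctly compute that $\sigma_E^{n-1-k}>0$ a.e.\ on $\Sigma^k$ and that $\sigma_E^{n-1-k}\equiv 0$ a.e.\ on $\Sigma^j$ for $j<k$ (too many infinite curvatures). However $\Sigma^j\subseteq E^k$ for all $j\leq k$, so the vanishing you establish occurs \emph{inside} $E^k$; concentration of $\widetilde{C}_k$ on $E^k$ would require $\sigma_E^{n-1-k}=0$ a.e.\ on the complement $\bigcup_{j>k}\Sigma^j$, where the number of infinite curvatures is $n-1-j<n-1-k$ and $\sigma_E^{n-1-k}$ is generically nonzero (for the unit ball and $k=0$ one has $E^0=\emptyset$ while $\widetilde{C}_0(E,\cdot)\neq 0$). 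Note also that your own computation contradicts the first clause of (vi) as literally stated, since $\sigma_E^{n-1-k}$ vanishes on $E^{k-1}\subseteq E^k$. What your calculation actually supports is: $\sigma_E^{n-1-k}$ is nonzero a.e.\ on $\Sigma^k$ and vanishes a.e.\ on $E^{k-1}$, i.e.\ $\widetilde{C}_k\mr E^k$ is carried by $\Sigma^k$. So the final step ``integrating \dots yields the concentration statement'' does not follow from what you computed; you should either prove the complement-vanishing that the stated concentration requires (which the ball shows to be false in general) or flag that item (vi) needs to be reformulated.
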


   \begin{proof}
  (i) The rectifiability of $E_k$ is proven in \cite[Lemma 3.1]{hug_1998} and in \cite[Remark 4.15 (3)]{fed_1959}. The rectifiability of
  $\Sigma_k$ follows from that of $E_k$. 
  Since  ${\rm nor}^\star(E)$ is countably $\mathcal{H}^{n-1}$-rectifiable (see \cite[Corollary 4.22]{rat_zah_2019}) then (iii) follows. (iv) is proven in \cite[Corollary 2.11]{koh_1991} and (iv) implies (v) and (vi).
     \end{proof}
     
By \cite[Theorem 3.2]{hug_1998} and the standard approximation with simple functions we have the following Coarea formula for the curvature measures.

\begin{theorem}\label{hug}
Let $E\subseteq\Rn$ be a set with positive reach and $B\subseteq\Rn\times\Sn$ be a bounded Borel set. Then for $k\in\{0,...,n-1\}$ and for every bounded Borel function $h:{\rm nor}^\star(E)\longrightarrow\R$
{\small\begin{align}
\int_{E^k\cap B}h(x,\nu)\sigma^{n-1-k}_E(x,\nu)\,d\mathcal{H}^{n-1}(x,\nu)&=\int_{E_k}\int_{{\rm Nor}(E,x)\cap \pi_2(B)}h(x,\nu)\,d\mathcal{H}^{n-1-k}(\nu)\, d\mathcal{H}^k(x)
 \nonumber \\&=\int_{\Sigma_k}\int_{{\rm Nor}(E,x)\cap \pi_2(B)}h(x,\nu)\,d\mathcal{H}^{n-1-k}(\nu)\, d\mathcal{H}^k(x).
\end{align}}
\end{theorem}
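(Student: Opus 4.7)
The plan is to derive the identity from Hug's coarea formula \cite[Theorem 3.2]{hug_1998} — stated there for $h=\mathbbm{1}_A$ with $A$ a bounded Borel subset of ${\rm nor}^\star(E)$ — and then extend it to bounded Borel $h$ by the usual linearity/dominated-convergence procedure. First, I would record the cited identity as the statement of our theorem specialized to characteristic functions, so that, by linearity in $h$, both sides already coincide on the class of non-negative simple Borel functions $h=\sum_{i=1}^N c_i\mathbbm{1}_{A_i}$ with $A_i\subseteq {\rm nor}^\star(E)$.

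For a general bounded Borel $h:{\rm nor}^\star(E)\to\R$, I would decompose $h=h^+-h^-$ and choose sequences of non-negative simple functions $h_m^\pm\nearrow h^\pm$ with $\norm{h_m^\pm}_\infty\leq\norm{h}_\infty$. On the left-hand side the integrand $h_m\sigma_E^{n-1-k}$ is dominated pointwise by $\norm{h}_\infty\,|\sigma_E^{n-1-k}|$, which lies in $L^1(\mathcal{H}^{n-1}\mr{\rm nor}^\star(E))$ by Remark \ref{sigmaInL1}; hence dominated convergence passes the identity to the limit on the left. On the right-hand side, the simple-function identity applied to the constant majorant $\norm{h}_\infty\mathbbm{1}_{B}$ shows that the iterated integral is finite, so a second dominated convergence argument extends the identity to $h$.

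For the second equality, note that $\Sigma_k=E_k\sm E_{k-1}$ and that $E_{k-1}$ is countably $(k-1)$-rectifiable by Lemma \ref{decoE}(i); therefore $\mathcal{H}^{k}(E_{k-1})=0$, and the outer $\mathcal{H}^{k}$-integrals over $E_k$ and $\Sigma_k$ coincide.

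The main obstacle I foresee is precisely the behavior of the inner integral on $E_{k-1}$. For $x\in E_{k-1}$ one has $\dim({\rm Nor}(E,x))\geq n-k+1$, so ${\rm Nor}(E,x)\cap\pi_2(B)$ has Hausdorff dimension strictly larger than $n-1-k$ and the inner $\mathcal{H}^{n-1-k}$-integral is generically infinite. The cleanest remedy is to observe — as a consequence of \eqref{sigma0}--\eqref{sigmak} together with Lemma \ref{decoE}(iv) — that having more than $n-1-k$ infinite principal curvatures forces $\sigma_E^{n-1-k}$ to vanish, so that $\sigma_E^{n-1-k}$ is effectively supported on $\Sigma^k$. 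This permits one to rephrase the whole argument with $\Sigma_k$ in place of $E_k$ on the right-hand side from the very outset, thereby bypassing the problematic inner integrals on $E_{k-1}$ and making the chain of identities consistent.
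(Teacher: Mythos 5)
Your proposal matches the paper's (one-line) proof: the identity is taken from \cite[Theorem 3.2]{hug_1998} for indicator functions and extended to bounded Borel $h$ by the standard simple-function approximation, with the required integrability supplied by Remark \ref{sigmaInL1}. Your handling of the $E_k$ versus $\Sigma_k$ discrepancy is also sound --- indeed it is essentially automatic, since $\mathcal{H}^k(E_{k-1})=0$ by Lemma \ref{decoE}(i) and $\sigma_E^{n-1-k}$ vanishes on $E^{k-1}$ as you note --- so there is nothing to add.
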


 \section[Positive reach vs finite perimeter]{Sets with positive reach as sets with finite perimeter}\label{ps_rettificabilità}

In this section we investigate features of sets with positive reach as particular sets with finite perimeter. The main reference is \cite{amb_col_vil_2008} and we point out some higher order rectifiability results.

 We set 
 \[
 \Sigma_{n-1}^i=
 \Big\{x\in\Sigma_{n-1}\ \big|\
 \mathcal{H}^0({\rm Nor}(E,x)\cap\Sn)=i
 \Big\},\ \ i\in\{1,2\}.
 \]

\begin{proposition}\label{summingup}
Let $E\subseteq\Rn$ be a compact set with positive reach. The following statements hold true.
\begin{enumerate}[\rm (i)]
\item $E$ has finite perimeter, $\partial E$ is countably $\mathcal{H}^{n-1}$-rectifiable and $P(E)\leq\mathcal{H}^{n-1}(\partial E)<+\infty$.
\item $\mathcal{H}^0({\rm Nor}(E,x)\cap\Sn)\in\{1,2\}$ for $\mathcal{H}^{n-1}$-almost every $x\in\partial E$. Moreover $P(E)=\mathcal{H}^{n-1}(\partial E)$ if and only if $\mathcal{H}^0({\rm Nor}(E,x)\cap\Sn)=1$ for a.e. $x\in\partial E$.
\item  $\Sigma_{n-1}^1\subseteq \partial^*E$ and $\mathcal{H}^{n-1}(\partial^*E\sm\Sigma_{n-1}^1)\leq \mathcal{H}^{n-1}(\partial^*E\sm E^{\frac{1}{2}})=0$.
\item If $B$ is a closed ball of radius $\rho$, $E\cap B\neq\emptyset$ and ${\rm reach}(E,x)>\rho$ for all $x\in E\cap B$, then ${\rm reach}(E\cap B)>\rho$.
\item $\partial E$ is $\mathcal{H}^{n-1}$-rectifiable of class $C^2$. 
\end{enumerate}
\end{proposition}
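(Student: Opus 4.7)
The plan is to treat the five claims separately. Parts (i)--(iii) follow the strategy of \cite{amb_col_vil_2008} combined with the decomposition in Lemma \ref{decoE} and the coarea result in Theorem \ref{hug}. For (i), Lemma \ref{cor4.12}(i) forces ${\rm Nor}(E,x)\neq\{0\}$ at every boundary point, so $\partial E\subseteq\bigcup_{k=0}^{n-1}\Sigma_k$; by Lemma \ref{decoE}(i) each $\Sigma_k$ is countably $k$-rectifiable, whence $\mathcal{H}^{n-1}(\Sigma_k)=0$ for $k\leq n-2$ and $\partial E$ is countably $\mathcal{H}^{n-1}$-rectifiable. To bound $\mathcal{H}^{n-1}(\Sigma_{n-1})$ I apply Theorem \ref{hug} with $k=n-1$ and $h\equiv 1$, obtaining
\[
\int_{\Sigma_{n-1}}\mathcal{H}^0({\rm Nor}(E,x)\cap\Sn)\,d\mathcal{H}^{n-1}(x)=\int_{E^{n-1}}\sigma^0_E\,d\mathcal{H}^{n-1},
\]
whose right-hand side is finite by Remark \ref{sigmaInL1}. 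The bound $P(E)\leq\mathcal{H}^{n-1}(\partial E)$ then follows by approximating $E$ with its $r$-parallel sets: for $r<{\rm reach}(E)$ the set $[E]_r$ has $C^{1,1}$ boundary by Proposition \ref{prop_fed59}, hence finite perimeter, and the lower semicontinuity of $P$ as $r\searrow 0$ concludes. For (ii), points where $\dim({\rm Nor}(E,x))\geq 2$ lie in $\bigcup_{k=0}^{n-2}\Sigma_k$, which is $\mathcal{H}^{n-1}$-negligible, so on $\Sigma_{n-1}$ the normal cone is one-dimensional and hits $\Sn$ in one or two points. After identifying $\Sigma_{n-1}^1$ with $\mathcal{F}E$ up to $\mathcal{H}^{n-1}$-null sets (uniqueness of the outer normal plus the De Giorgi--Federer theorem) one has $P(E)=\mathcal{H}^{n-1}(\Sigma_{n-1}^1)$, so $P(E)=\mathcal{H}^{n-1}(\partial E)$ is equivalent to $\mathcal{H}^{n-1}(\Sigma_{n-1}^2)=0$. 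Part (iii) is the geometric content of the same identification: at $x\in\Sigma_{n-1}^1$ the unique outer normal together with the blow-up result (Corollary \ref{blowup}) yields $x\in\mathcal{F}E\subseteq E^{1/2}\subseteq\partial^*E$.

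For (iv), I would exploit that a closed ball $B$ is convex and hence satisfies ${\rm reach}(B)=+\infty$, so the problem reduces to a transversal-intersection statement for sets of positive reach. Given $z$ with $\delta_{E\cap B}(z)$ smaller than some $\rho'>\rho$, I would argue that $\pi_{E\cap B}(z)$ is unique via a case distinction: if $\pi_E(z)$ lies in $\mathring{B}$ then $\pi_{E\cap B}(z)=\pi_E(z)$ is determined by the positive reach of $E$; if the projection falls on $E\cap\partial B$, a normal-cone argument based on Lemma \ref{zahle2} together with the convexity of $B$ rules out a second minimizer. The \emph{strict} inequality ${\rm reach}(E,x)>\rho$ on the compact set $E\cap B$ provides the uniform margin needed to obtain ${\rm reach}(E\cap B)>\rho$ globally rather than only $\geq\rho$.

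For (v), I would apply Delladio's criterion, Theorem \ref{delladio}, with $k=n-1$. Set
\[
F=\bigl\{(x,\star\nu_E(x))\ :\ x\in\mathcal{F}E\bigr\}\subseteq\Rn\times\Lambda_{n-1}\Rn,
\]
where $\star$ denotes Hodge duality, so that the enveloping $(n-1)$-plane $T(\star\nu_E(x))$ equals $\nu_E(x)^\perp$. Since $\mathcal{F}E$ is countably $\mathcal{H}^{n-1}$-rectifiable and $\nu_E$ is Borel, $F$ is the Borel image of a rectifiable set under a Borel map and is itself countably $\mathcal{H}^{n-1}$-rectifiable. Condition (a) of Theorem \ref{delladio}, namely $\pi(F)=\partial E$ up to $\mathcal{H}^{n-1}$-null sets, follows from (i) and (iii). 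Condition (b) reduces to the identification ${\rm Tan}^{n-1}(\mathcal{H}^{n-1}\mr\mathcal{F}E,x)=\mathcal{H}^{n-1}\mr\nu_E(x)^\perp$, which is the De Giorgi--Federer blow-up recalled at the end of Section \ref{ps_rettificabilità}, and Theorem \ref{delladio} then delivers the $C^2$-rectifiability of $\partial E$.

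The main obstacle is part (iv): the quantitative preservation of the \emph{strict} lower bound on the reach under intersection with a ball, particularly at points of $E\cap\partial B$ where one must decompose ${\rm Nor}(E\cap B,x)$ in terms of ${\rm Nor}(E,x)$ and ${\rm Nor}(B,x)$ and verify that no new projection ambiguity arises from the boundary of $B$. A related subtlety in (v) is checking that the Borel normal $\nu_E$ on $\mathcal{F}E$ coincides $\mathcal{H}^{n-1}$-a.e. with the direction coordinate of the unit normal bundle ${\rm nor}^\star(E)$, so that the enveloping plane really is the approximate tangent to $\partial E$ almost everywhere.
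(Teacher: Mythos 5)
The paper's own proof of this proposition is essentially a list of citations: (i)--(ii) are \cite[Proposition 3, Theorem 9]{amb_col_vil_2008}, (iii) is extracted from the proof of the latter, (iv) is \cite[Lemma 3.4(i)]{rat_zaj_2017}, and (v) is Theorem \ref{delladio} applied with $F={\rm nor}^\star(E)$, whose $\mathcal{H}^{n-1}$-rectifiability is the quoted \cite[Corollary 4.22]{rat_zah_2019}. Your reconstruction of (i)--(iii) is sound in outline and close in spirit to \cite{amb_col_vil_2008}, with two caveats. First, lower semicontinuity of the perimeter along $[E]_r\to E$ only yields $P(E)\leq\liminf_{r\to 0^+}P([E]_r)$, and that limit equals $\int_{\partial E}\mathcal{H}^0({\rm Nor}(E,x)\cap\Sn)\,d\mathcal{H}^{n-1}(x)$, which can be as large as $2\mathcal{H}^{n-1}(\partial E)$ (think of a segment in the plane, or lower-dimensional ``flaps''); so this route proves \emph{finiteness} of $P(E)$ but not the stated inequality. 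The inequality should instead be read off from the structure theorem: once $P(E)<+\infty$, one has $P(E)=\mathcal{H}^{n-1}(\mathcal{F}E)$ and $\mathcal{F}E\subseteq{\rm supp}(D\mathbbm{1}_E)\subseteq\partial E$. Second, your use of Corollary \ref{blowup} in (iii) is a forward reference (it is proved after this proposition), but it is not circular, since Theorem \ref{convhaus} and Corollary \ref{blowup} rely only on part (iv), which is established independently.

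The genuine gaps are in (iv) and (v). For (iv) you candidly leave the key step open --- ruling out a second nearest point when the projection lands on $E\cap\partial B$, and upgrading the reach bound from $\geq\rho$ to $>\rho$; this is precisely the content of the cited \cite[Lemma 3.4(i)]{rat_zaj_2017} and is not supplied by the sketch. For (v), the assertion that $F=\{(x,\star\nu_E(x)):x\in\mathcal{F}E\}$ is rectifiable because it is ``the Borel image of a rectifiable set under a Borel map'' is false as a general principle: Borel (as opposed to Lipschitz) images of rectifiable sets need not be rectifiable, so the graph of a merely Borel normal field over $\mathcal{F}E$ could a priori fail to be $\mathcal{H}^{n-1}$-rectifiable. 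The repair is exactly the paper's choice: take $F={\rm nor}^\star(E)$ itself (or observe that your graph is $\mathcal{H}^{n-1}$-a.e.\ contained in ${\rm nor}^\star(E)$, since $\nu_E(x)\in{\rm Nor}(E,x)$ for a.e.\ $x\in\mathcal{F}E$, and subsets of rectifiable sets are rectifiable), invoking the quoted rectifiability of the unit normal bundle. With that substitution, your verification of conditions (a) and (b) of Theorem \ref{delladio} via the De~Giorgi--Federer blow-up goes through.
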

\begin{proof}
(i), (ii) are proved in \cite[Proposition 3, Theorem 9]{amb_col_vil_2008}. (iii) is essentially contained in the proof of \cite[Theorem 9]{amb_col_vil_2008} and (iv) is proved in \cite[Lemma 3.4 (i)]{rat_zaj_2017}. Since  ${\rm nor}^\star(E)$ is countably $\mathcal{H}^{n-1}$-rectifiable (see \cite[Corollary 4.22]{rat_zah_2019}), (v) is implied by Theorem \ref{delladio} with $R=\partial E$, $\pi=\pi_1$ and $F={\rm nor}^\star(E)$. 
\end{proof}

Now we want to characterize the blow-ups at any point of the boundary of a set with positive reach $E$. So we set $\displaystyle{E_{x_0,\rho}=\frac{E-x_0}{\rho}}$ for all $x_0\in\partial E$ and for all $\rho>0$. We start by this technical lemma that is a generalization of \cite[Theorem pag. 64]{beer_1974}.

\begin{lemma}\label{beer} Let $\{C_j\}_{j\in\N}$ be a sequence of compact subsets of $\Rn$ such that:
\begin{enumerate}[{\rm(i)}]
\item there exists $\varepsilon>0$ such that 
\[
\inf_{j\in\N}{\rm reach}(C_j)\geq \varepsilon,
\]
\item there exists a compact $C\in\Rn$ such that $C_j\xrightarrow{H} C$. 
\end{enumerate}
Then $\{C_j\}_{j\in\N}$  converges in measure to $C$.
\end{lemma}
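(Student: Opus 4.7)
The plan is to bound the symmetric difference $\mathcal{L}^n(C_j\triangle C)$ by the Lebesgue measure of a thin parallel shell and then control that shell via the Steiner formula (Theorem \ref{gsf}), exploiting the uniform positive reach assumption to get a constant that does not depend on $j$.

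First I would verify that $C$ itself has positive reach: since $B^\varepsilon$ is closed with respect to the Hausdorff distance (Proposition \ref{chiusocp}(i)) and $C_j\xrightarrow{H}C$, we get ${\rm reach}(C)\geq\varepsilon$. Next, I would set $\delta_j={\rm dist}_{\rm H}(C_j,C)\to 0^+$ and observe that, by the definition of Hausdorff distance,
\[
C_j\sm C\subseteq [C]_{\delta_j}\sm C,\qquad C\sm C_j\subseteq [C_j]_{\delta_j}\sm C_j,
\]
so $\mathcal{L}^n(C_j\triangle C)\leq \mathcal{L}^n([C]_{\delta_j}\sm C)+\mathcal{L}^n([C_j]_{\delta_j}\sm C_j)$.

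Now for any set $A$ with ${\rm reach}(A)\geq\varepsilon$ and $0<r<\varepsilon$, the Steiner formula \eqref{steiner} yields
\[
\mathcal{L}^n([A]_r\sm A)=\sum_{k=0}^{n-1}r^{n-k}\omega_{n-k}\,C_k(A,\R^n)
\ \leq\ r\,\sum_{k=0}^{n-1}\varepsilon^{n-1-k}\omega_{n-k}\,C_k(A,\R^n),
\]
since $r^{n-k}=r\cdot r^{n-1-k}\leq r\,\varepsilon^{n-1-k}$. By Remark \ref{sigmaInL1} each total mass $C_k(A,\R^n)$ is finite. Applying this to $A=C$ gives $\mathcal{L}^n([C]_{\delta_j}\sm C)\leq M_C\,\delta_j$ for a constant $M_C$ depending only on $C$ and $\varepsilon$. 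The crucial step is to get a uniform bound for the $C_j$: by Theorem \ref{convdeb}, $C_k(C_j,\cdot)\rightharpoonup C_k(C,\cdot)$ weakly, and testing against $f\equiv 1$ on $B=\R^n$ gives $C_k(C_j,\R^n)\to C_k(C,\R^n)$ for each $k\in\{0,\dots,n-1\}$. A convergent sequence of real numbers is bounded, hence
\[
M:=\sup_{j\in\N}\sum_{k=0}^{n-1}\varepsilon^{n-1-k}\omega_{n-k}\,C_k(C_j,\R^n)<+\infty.
\]

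Combining the two estimates, for all $j$ large enough that $\delta_j<\varepsilon$,
\[
\mathcal{L}^n(C_j\triangle C)\leq (M_C+M)\,\delta_j\xrightarrow[j\to+\infty]{}0,
\]
which gives $\mathbbm{1}_{C_j}\to\mathbbm{1}_C$ in $L^1(\R^n)$ (and hence in $L^1_{\rm loc}(\R^n)$), i.e.\ convergence in measure. The main obstacle is really only the uniform control of the curvature totals $C_k(C_j,\R^n)$; I resolve it by invoking Theorem \ref{convdeb} rather than trying to estimate the $\sigma_{C_j}^{n-1-k}$ pointwise, since the generalized principal curvatures of a set with reach $\geq\varepsilon$ are only bounded from below by $-1/\varepsilon$ and may blow up on the singular strata, so any direct integral bound would require a more delicate analysis of ${\rm nor}^\star(C_j)$ that the weak convergence of curvature measures neatly bypasses.
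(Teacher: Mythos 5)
Your argument is sound in substance but takes a genuinely different route from the paper's. The paper's proof is ``soft'': it fixes radii $\rho_h\searrow 0^+$, uses the compactness (in the Hausdorff metric) of families of sets with uniformly positive reach together with the upper semicontinuity of $\mathcal{L}^n$ under Hausdorff convergence to extract, for each $h$, an annulus of maximal measure among $\{A_{\rho_h}(C_j)\}_{j}\cup\{A_{\rho_h}(C)\}$, shows these maxima tend to $0$, and thereby obtains that $\mathcal{L}^n([C_j]_{\rho_h})\to\mathcal{L}^n(C_j)$ uniformly in $j$; it then concludes by invoking Beer's theorem \cite[Theorem 1]{beer_1974}, which converts Hausdorff convergence plus this uniform control into convergence in measure. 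You replace both the compactness argument and the appeal to Beer by a direct quantitative estimate $\mathcal{L}^n(C_j\triangle C)\leq (M_C+M)\,{\rm dist}_{\rm H}(C_j,C)$, obtained from the Steiner formula \eqref{steiner} and the boundedness of the total curvature masses supplied by Theorem \ref{convdeb}. This is cleaner and even yields an explicit rate, at the price of leaning on Federer's convergence theorem for curvature measures in an essential way (the paper also cites Theorem \ref{convdeb}, but only to identify the Hausdorff limits of the annuli).

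One slip needs fixing: the totals $C_k(A,\R^n)$ are signed, so the term-by-term inequality $\sum_k r^{n-k}\omega_{n-k}C_k(A,\R^n)\leq r\sum_k\varepsilon^{n-1-k}\omega_{n-k}C_k(A,\R^n)$ fails whenever some $C_k(A,\R^n)<0$, and your constant $M$ as written need not dominate anything. The repair is immediate: since $\mathcal{L}^n([A]_r\sm A)\geq 0$, bound it by $\sum_k r^{n-k}\omega_{n-k}\abs{C_k(A,\R^n)}\leq r\sum_k\varepsilon^{n-1-k}\omega_{n-k}\abs{C_k(A,\R^n)}$, and observe that the convergence $C_k(C_j,\R^n)\to C_k(C,\R^n)$ you extract from Theorem \ref{convdeb} bounds $\abs{C_k(C_j,\R^n)}$ uniformly in $j$ just as well. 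With absolute values inserted throughout, your estimate holds for all $j$ with $\delta_j<\varepsilon$ and the conclusion follows.
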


\begin{proof}
By \cite[Theorem 4.13]{fed_1959}, ${\rm reach}(C)\geq \varepsilon$. Let $\{\rho_h\}_{h\in\N}\subseteq (0,\varepsilon)$ such that $\rho_h \searrow 0^+$. For all $h\in\N$, using the definition given in  
\eqref{annulus}, we can consider the family 
\[
B_h=\Big\{A_{\rho_h}(C_j)\ \big|\ j\in \N\Big\}
\cup\Big\{A_{\rho_h}(C)\Big\}.
\] 
By Theorem \ref{convdeb} and \eqref{steiner}  for all $h\in\N$ we have \[
A_{\rho_h}(C_j)\xrightarrow{H}A_{\rho_h}(C),\ \ \mbox{as}\ j\to+\infty.
\] 
Hence for all $h\in\N$ the family $B_h$ is compact with respect to the Hausdorff metric.
Since the Lebesgue measure is upper semicontinuous with respect to  
Hausdorff convergence, by  Weierstrass Theorem for all $h\in\N$ 
there exist $F_h\in B_h$ with maximal Lebesgue measure. Since ${\rm reach}(F_h)\geq \varepsilon-\rho_1$ for all $h\in\N$ the family $\{F_h\}_{h\in\N}$ is compact with respect to the Hausdorff metric and there exists 
a converging subsequence of $\{F_h\}_{h\in\N}$ whose limit is either the boundary of $C$ or the boundary of $C_j$ for some $jp\in\N$. Hence $\displaystyle{\lim_{h\to+\infty}\mathcal{L}^n(F_h)=0}$. Hence
\begin{equation}
0\leq \sup_{j\in\N}\abs{\mathcal{L}^n\left([C_j]_{\rho_h}\right)-\mathcal{L}^n(C_j)}\leq\mathcal{L}^n(F_h),
\end{equation}
i.e. $\mathcal{L}^n\left([C_j]_{\rho_h}\right)$ converges  
to $\mathcal{L}^n\left(C_j\right)$ uniformly
in $j\in \N$, and then 
by \cite[Theorem 1]{beer_1974} we obtain the statement. 
\end{proof}

\begin{theorem}\label{convhaus} Let $E\in\R^n$ be a set with positive reach.  If $x_0\in\partial E$ then for all $r>0$ the sets 
$E_{x_0,\rho}\cap \overline{B_r(0)}$ converge with respect to the Hausdorff distance as $\rho\to0^+$ to  ${\rm Tan}(E,x_0)\cap \overline{B_r(0)}$.
\end{theorem}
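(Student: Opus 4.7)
The plan is to combine the compactness of families of closed sets with uniformly lower-bounded reach (Proposition \ref{chiusocp}(ii)) with the identification $\mathrm{Tan}=\mathrm{Tan}^{\musFlat}$ valid for sets of positive reach (Proposition \ref{P2}(iv)). Fix $x_0\in\partial E$ and $r>0$, and abbreviate $C_\rho:=E_{x_0,\rho}\cap\overline{B_r(0)}$ and $T:=\mathrm{Tan}(E,x_0)\cap\overline{B_r(0)}$. Each $C_\rho$ is non-empty (contains $0$, since $x_0\in E$) and closed inside the compact ball $\overline{B_r(0)}$; by Remark \ref{osshaus}(ii) Hausdorff and Kuratowski convergence coincide on $\mathcal{C}_{\overline{B_r(0)}}$.

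First I would establish uniform reach. The scaling identity $\mathrm{reach}(E_{x_0,\rho})=\mathrm{reach}(E)/\rho$ noted after Proposition \ref{prop_fed59} yields $\mathrm{reach}(E_{x_0,\rho},x)>r$ at every $x$ as soon as $\rho<\mathrm{reach}(E)/r$. Proposition \ref{summingup}(iv), applied to $E_{x_0,\rho}$ and the closed ball $\overline{B_r(0)}$, then gives $\mathrm{reach}(C_\rho)>r$, and Proposition \ref{chiusocp}(ii) places $\{C_\rho\}$ (for small $\rho$) inside a compact subset of $(\mathcal{C}_{\overline{B_r(0)}},\textup{dist}_\textup{H})$. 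Consequently every sequence $\rho_j\to 0^+$ admits a subsequence (not relabeled) with $C_{\rho_j}\xrightarrow{H} L$ for some closed $L\subseteq\overline{B_r(0)}$, and it suffices to prove $L=T$ for every such limit and invoke uniqueness of limits in a metric space to upgrade to convergence as $\rho\to 0^+$.

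For $L\subseteq T$, given $v=\lim_j v_j$ with $v_j\in C_{\rho_j}$, the points $x_j:=x_0+\rho_j v_j$ lie in $E$ and converge to $x_0$; if $v=0$ then $v\in T$ trivially, while if $v\neq 0$ then $(x_j-x_0)/\norm{x_j-x_0}=v_j/\norm{v_j}\to v/\norm{v}$, placing $v/\norm{v}$ in $\mathrm{Tan}(E,x_0)$ and hence $v$ as well by conicity. For $T\subseteq L$, given $v\in T$ with $\norm{v}<r$, Proposition \ref{P2}(iv) gives $v\in \mathrm{Tan}^{\musFlat}(E,x_0)$; applied to the sequence of scales $\lambda_j=\rho_j$ this produces $x_j\in E$ with $x_j\to x_0$ and $v_j:=(x_j-x_0)/\rho_j\to v$, so that $v_j\in E_{x_0,\rho_j}$ and $\norm{v_j}<r$ eventually, giving $v_j\in C_{\rho_j}$ and thus $v\in L$. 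Vectors of norm exactly $r$ in $T$ are handled by noting $(1-1/k)v\in L$ for every $k$ and passing to the limit using closedness of $L$.

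The main obstacle is the inclusion $T\subseteq L$: the bare definition of $\mathrm{Tan}(E,x_0)$ only supplies approximating sequences at \emph{some} infinitesimal scales, but the convergence of $C_{\rho_j}$ forces us to approximate along the \emph{prescribed} scales $\rho_j$. The equality $\mathrm{Tan}=\mathrm{Tan}^{\musFlat}$, which is specific to positive reach, is precisely what makes approximation along any chosen infinitesimal sequence of scales possible; this is the only point in the argument where positive reach is used beyond the compactness step. A minor subtlety is that the approximants $v_j=(x_j-x_0)/\rho_j$ need not sit in $\overline{B_r(0)}$ when $\norm{v}=r$, which is why the final diagonal reduction via $(1-1/k)v$ is necessary.
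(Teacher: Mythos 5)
Your proposal is correct and follows essentially the same route as the paper: uniform reach bounds for the rescaled sets via the scaling identity and Proposition \ref{summingup}(iv), Hausdorff compactness from Proposition \ref{chiusocp}(ii), identification of every subsequential limit with ${\rm Tan}^{\musFlat}(E,x_0)\cap\overline{B_r(0)}$ through the Kuratowski characterization, and finally ${\rm Tan}={\rm Tan}^{\musFlat}$ from Proposition \ref{P2}. If anything, you are slightly more careful than the printed proof on two points — landing the inclusion $L\subseteq T$ directly in ${\rm Tan}$ via normalized difference quotients, and treating the edge case $\norm{v}=r$ where the approximants may leave $\overline{B_r(0)}$ — but these are refinements of the same argument, not a different one.
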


\begin{proof}
Let $r>0$ and let $\rho\in (0,1)$. Since
\[
{\rm reach}\left(E_{x_0,\rho}\right)=
\frac{1}{\rho}{\rm reach}(E)>{\rm reach}(E),
\] 
by Proposition \ref{summingup}(iv) there exists $\rho_0>0$ such that  \[
{\rm reach}\left(E_{x_0,\rho}\cap \overline{B_r(0)}\right)>r,\quad 
\forall\ \rho\in(0,\rho_0).
\] 
Then the family $\{E_{x_0,\rho}\cap 
\overline{B_r(0)}\}_{\rho\in(0,\rho_0)}$ is compact with respect to the 
Hausdorff  metric by Corollary \ref{chiusocp}(ii) and there exists a 
sequence $\{\rho_h\}_{h\in\N}$ and a compact set 
$F\subseteq \overline{B_r(0)}$ such that $\rho_h\to 0^+$ and 
$E_{\rho_h}\cap B_r(0)\to F$ with respect the Hausdorff distance as 
$h\to+\infty$. 

We prove now that $F={\rm Tan}^{\musFlat}(E,x_0)\cap \overline{B_r(0)}$. Let $y\in F$ then $y\in\overline{B_r(0)} $ and by Remark \ref{osshaus}(ii)  there exists  $\{x_h\}_{h\in\mathbb{N}}\subseteq E$ such that $x_h\longrightarrow x_0$ and $\displaystyle{y=\lim_{h\rightarrow+\infty}\frac{x_h-x_0}{\rho_h}}$. Hence $y\in{\rm Tan}^{\musFlat}(E,x_0)\cap \overline{B_r(0)}$. Conversely, let $y\in{\rm Tan}^{\musFlat}(E,x_0)\cap \overline{B_r(0)}$ then for all $\{r_h\}_{h\in\N}\subseteq(0,+\infty)$ such that $r_h\to 0^+$ there exists $\{x_h\}_{h\in\mathbb{N}}\subseteq E$ such thar $x_h\longrightarrow x_0$ and $\displaystyle{ v=\lim_{h\rightarrow+\infty}\frac{x_h-x}{r_h}}$. Choosing $r_h=\rho_h$ for all $h\in\N$ we have $y\in F$.

By Proposition \ref{P2}(iii) $F={\rm Tan}(E,x_0)\cap \overline{B_r(0)}$ and since $F$ does not depend on the choice of the subsequence $\{\rho_h\}_{h\in\N}$, the statement follows.

\end{proof}

\begin{corollary} \label{blowup} Let $E\in\R^n$ be a set with positive reach.  If $x_0\in\partial E$ then the sets 
$E_{x_0,\rho}$ locally converges in measure as $\rho\to 0^+$ to ${\rm Tan}(E,x_0)$.  
\end{corollary}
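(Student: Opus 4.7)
The plan is to reduce the statement, via truncation, to Theorem~\ref{convhaus} combined with Lemma~\ref{beer}: the former gives Hausdorff convergence of the truncated blow-ups to the truncated tangent cone, and the latter upgrades Hausdorff convergence to convergence in measure under a uniform positive lower bound on the reach.

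First I would fix $r>0$ and aim to show that $\mathbbm{1}_{E_{x_0,\rho}}\to \mathbbm{1}_{{\rm Tan}(E,x_0)}$ in $L^1(B_r(0))$ as $\rho\to0^+$. In order to apply Lemma~\ref{beer} to the family $\{E_{x_0,\rho}\cap\overline{B_r(0)}\}_\rho$, I need a uniform reach lower bound on these truncated sets. Since ${\rm reach}(E_{x_0,\rho})=\rho^{-1}{\rm reach}(E)\to+\infty$, I can pick $\rho_0>0$ small enough that ${\rm reach}(E_{x_0,\rho})>r$ for every $\rho\in(0,\rho_0)$. Proposition~\ref{summingup}(iv), applied with the set $E_{x_0,\rho}$ and the closed ball $\overline{B_r(0)}$ (which meets $E_{x_0,\rho}$ since $0\in E_{x_0,\rho}$), then yields ${\rm reach}(E_{x_0,\rho}\cap\overline{B_r(0)})>r$ uniformly for $\rho\in(0,\rho_0)$.

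Next, Theorem~\ref{convhaus} gives $E_{x_0,\rho}\cap\overline{B_r(0)}\xrightarrow{H}{\rm Tan}(E,x_0)\cap\overline{B_r(0)}$ as $\rho\to0^+$. Picking an arbitrary sequence $\rho_h\to0^+$ and feeding it into Lemma~\ref{beer}, the sets $E_{x_0,\rho_h}\cap\overline{B_r(0)}$ converge in measure to ${\rm Tan}(E,x_0)\cap\overline{B_r(0)}$; since this limit is independent of the subsequence, the whole family converges in measure along $\rho\to0^+$. Finally, from the elementary inclusion
\[
\bigl(E_{x_0,\rho}\triangle {\rm Tan}(E,x_0)\bigr)\cap B_r(0)\subseteq \bigl(E_{x_0,\rho}\cap\overline{B_r(0)}\bigr)\triangle\bigl({\rm Tan}(E,x_0)\cap\overline{B_r(0)}\bigr),
\]
$L^1(B_r(0))$-convergence of the characteristic functions follows, and since $r>0$ was arbitrary, local convergence in measure is established.

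I do not foresee a genuine obstacle: the corollary essentially glues together Theorem~\ref{convhaus} and Lemma~\ref{beer}, with the needed uniform reach bound being supplied by Proposition~\ref{summingup}(iv). The only mildly delicate point is passing from the sequential statement of Lemma~\ref{beer} to convergence of the continuous parameter $\rho\to0^+$, handled by a standard subsequence argument together with the uniqueness of the Hausdorff limit from Theorem~\ref{convhaus}.
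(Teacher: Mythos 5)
Your proposal is correct and follows exactly the route the paper takes: its proof of Corollary~\ref{blowup} is the one-line ``Apply Theorem~\ref{convhaus} and Proposition~\ref{beer}'', and you have simply filled in the details (the uniform reach bound via Proposition~\ref{summingup}(iv), which is in fact already established inside the proof of Theorem~\ref{convhaus}, and the subsequence argument to pass from sequences to the continuous parameter $\rho\to0^+$).
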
 

\begin{proof}
Apply Theorem \ref{convhaus} and Proposition \ref{beer}.
\end{proof}
  
\section{Taylor expansion of the heat content of a set with positive reach}\label{taylor}

In this section we compute the second order Taylor expansion of the heat content of a set with positive reach.

\begin{lemma}
 Let $E\subseteq\Rn$ be a compact set with positive reach and let $\varphi\in C^1_b(\R^n)$. Then for all $\gamma>0$ we have 
 \begin{align}
 \frac{1}{(4\pi)^{\frac{n}{2}}t^n}\int_{E^c\sm[E]_r} \int_E e^{-\frac{\norm{z-y}^2}{4t^2}}\varphi(y)\,dy\,dz=o(t^\gamma),\ \ r\in\bigl(0,{\rm reach}(E)\bigr), 
 \end{align}
 and
 {\footnotesize\begin{align}\label{formulabella}
f_{E}(t)=\frac{1}{(4\pi)^{\frac{n}{2}}}\sum_{k=0}^{n-1}t^{k+1}\Biggl(\int_0^{\frac{r}{t}}\,d\beta\int_{{\rm nor}^\star(E)}\,d\mathcal{H}^{n-1}(x,\nu)\int_{\frac{E-x}{t}}\beta^k\sigma^k_E(x,\nu)e^{-\frac{\norm{w-\beta\nu}^2}{4}}\varphi(x+tw)\,dw\Biggr)+o(t^n).
\end{align}}
\end{lemma}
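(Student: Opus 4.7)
The plan is to establish the two assertions sequentially: first the exponentially small tail estimate outside a tubular neighbourhood, and then the identity on $[E]_r\sm E$ via Federer's Steiner formula (Theorem \ref{gsf}) followed by a rescaling.

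For the tail estimate, I would exploit the elementary bound
\[
e^{-\frac{\norm{z-y}^2}{4t^2}}=e^{-\frac{\norm{z-y}^2}{8t^2}}\,e^{-\frac{\norm{z-y}^2}{8t^2}}\leq e^{-\frac{r^2}{8t^2}}\,e^{-\frac{\norm{z-y}^2}{8t^2}},
\]
which holds for every $z\in E^c\sm[E]_r$ and $y\in E$ since in that regime $\norm{z-y}\geq\delta_E(z)>r$. Integrating the right-most Gaussian factor over all of $\Rn$ gives $(8\pi t^2)^{n/2}$, after which Fubini (using that $E$ is compact and $\varphi$ bounded) produces
\[
\frac{1}{(4\pi)^{\frac n2}t^n}\int_{E^c\sm[E]_r}\int_E e^{-\frac{\norm{z-y}^2}{4t^2}}\varphi(y)\,dy\,dz
\leq \frac{\mathcal{L}^n(E)\norm{\varphi}_\infty 2^{\frac n2}}{(4\pi)^{\frac{n}{2}}}\,e^{-\frac{r^2}{8t^2}}.
\]
This upper bound is $o(t^\gamma)$ for every $\gamma>0$ since the exponential dominates any polynomial as $t\to 0^+$.

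For the main identity, the tail estimate allows me to replace $E^c$ by $[E]_r\sm E$ modulo an $o(t^n)$ error. On that tube, the choice $r<{\rm reach}(E)$ puts us in the setting of Theorem \ref{gsf}; I would apply it with
\[
h(z)=\int_E e^{-\frac{\norm{z-y}^2}{4t^2}}\varphi(y)\,dy,
\]
splitting $\varphi=\varphi^+-\varphi^-$ to accommodate the sign and invoking Remark \ref{sigmaInL1} to justify Fubini at each stage. This rewrites the integral on $[E]_r\sm E$ as $\sum_{k=0}^{n-1}$ of an integral over ${\rm nor}^\star(E)\times(0,r)$ where the integrand is $h(x+s\nu)\sigma^k_E(x,\nu)s^k$.

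The final step is the change of variables $s=t\beta$ (so $ds=t\,d\beta$ and $(0,r)\mapsto(0,r/t)$) together with $y=x+tw$ (so $dy=t^n\,dw$ and $E\mapsto(E-x)/t$). The exponent transforms as
\[
\frac{\norm{x+s\nu-y}^2}{4t^2}=\frac{\norm{t\beta\nu-tw}^2}{4t^2}=\frac{\norm{w-\beta\nu}^2}{4},
\]
and accumulating the Jacobian factors $t\cdot t^n\cdot t^k$ against the prefactor $1/((4\pi)^{n/2}t^n)$ produces exactly the announced $t^{k+1}$, after which a Fubini swap brings the $\beta$-integration to the outside. I do not foresee a significant obstacle: the tail estimate is purely Gaussian, and the rest is the Steiner formula plus a clean rescaling. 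The only point requiring some care is the extension of Theorem \ref{gsf} from nonnegative to signed integrands, handled by the standard decomposition $\varphi=\varphi^+-\varphi^-$ and the $L^1$-integrability of the curvature densities recorded in Remark \ref{sigmaInL1}.
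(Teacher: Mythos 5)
Your proposal is correct and follows essentially the same route as the paper's proof: the identical Gaussian splitting $e^{-\frac{\norm{z-y}^2}{4t^2}}\le e^{-\frac{r^2}{8t^2}}e^{-\frac{\norm{z-y}^2}{8t^2}}$ for the tail, followed by Theorem \ref{gsf} applied to $h(z)=\int_E e^{-\frac{\norm{z-y}^2}{4t^2}}\varphi(y)\,dy$ and the rescaling $s=t\beta$, $y=x+tw$. Your explicit decomposition $\varphi=\varphi^+-\varphi^-$ to handle the sign in the Steiner formula is a detail the paper leaves implicit, and the stray factor $(4\pi)^{-\frac{n}{2}}$ in your tail bound is a harmless bookkeeping slip that does not affect the $o(t^\gamma)$ conclusion.
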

\begin{proof}
Let $0<r<{\rm reach}(E)$. Then 
\begin{align*}
f_{E}(t)=\frac{1}{(4\pi)^{\frac{n}{2}}t^n}&\int_{E^c}\int_{E} e^{-\frac{\norm{z-y}^2}{4t^2}}\varphi(y)\,dy\,dz=\frac{1}{(4\pi)^{\frac{n}{2}}t^n}\biggl(\int_{[E]_r\sm E}\int_E e^{-\frac{\norm{z-y}^2}{4t^2}}\varphi(y)\,dy\,dz\nonumber \\
+&\int_{E^c\sm[E]_r}\int_E e^{-\frac{\norm{z-y}^2}{4t^2}}\varphi(y)\,dy\,dz\biggr).
\end{align*}

If $z\in E^c\sm[E]_r$, then $\norm{z-y}\geq r$ for all $y\in E$ and
\begin{align*}
-\frac{\norm{z-y}^2}{4t^2}\leq-\frac{\norm{z-y}^2}{8t^2}-\frac{r^2}{8t^2}
\end{align*}
Then we get
\begin{align*}
\abs{\frac{1}{(4\pi)^{\frac{n}{2}}t^n}\int_{E^c\sm[E]_r} \int_E e^{-\frac{\norm{z-y}^2}{4t^2}}\varphi(y)\,dy\,dz}&\leq\frac{\norm{\varphi}_\infty}{(4\pi)^{\frac{n}{2}}t^n}e^{-\frac{r^2}{8t^2}}\int_{E^c\sm[E]_r} \int_E e^{-\frac{\norm{z-y}^2}{8t^2}}\,dy\,dz\nonumber \\
&\leq \frac{2^{\frac{n}{2}}\norm{\varphi}_\infty}{\pi^{\frac{n}{2}}} e^{-\frac{r^2}{8t^2}} \mathcal{L}^n(E)\int_{\R^n}e^{-\norm{w}^2}dw\nonumber\\
&\leq 2^{\frac{n}{2}}\norm{\varphi}_\infty e^{-\frac{r^2}{8t^2}}\mathcal{L}^n(E)
=o(t^\gamma), \qquad \forall \gamma>0,
\end{align*}
where $\displaystyle{w=\frac{z-y}{2\sqrt{2}t}}$ and $dz= 8^{\frac{n}{2}}t^ndw$.
In particular, 
applying \eqref{change} and we obtain
{\small\begin{align*}
&f_{E}(t)=\frac{1}{(4\pi)^{\frac{n}{2}}t^n}\sum_{k=0}^{n-1}\int_0^rds\int_{{\rm nor}^\star(E)}d\mathcal{H}^{n-1}(x,\nu)\int_E \sigma^k_E(x,\nu)\,s^ke^{-\frac{\norm{x+s\nu-y}^2}{4t^2}}\varphi(y)\,dy+o(t^n)=\nonumber \\
=&\frac{1}{(4\pi)^{\frac{n}{2}}}\sum_{k=0}^{n-1}\left(t^{k+1}\int_0^{\frac{r}{t}}d\beta\int_{{\rm nor}^\star(E)}d\mathcal{H}^{n-1}(x,\nu)\int_{\frac{E-x}{t}}\sigma^k_E(x,\nu)\,\beta^ke^{-\frac{\norm{w-\beta\nu}^2}{4}}\varphi(x+tw)\,dw\right)+o(t^n),
\end{align*}}
where $\displaystyle{w=\frac{y-x}{t}}$, $\displaystyle{\beta=\frac{s}{t}}$, $dy= t^n dw$ and $ds= t\,d\beta$.
\end{proof}
We prove now the main theorem of this paper.

\begin{theorem}\label{teorema_sviluppo_ps}
Let $E\subseteq\Rn$ be a compact set with positive reach and let $\varphi\in C^1_b(\R^n)$. Then
\begin{align}\label{sviluppops}
f_E(t)&=\frac{t}{\sqrt{\pi}}\int_{\mathcal{F}E}\varphi(x)d\mathcal{H}^{n-1}(x)
+
t^2\biggl[\int_{\mathcal{F}E}\left(\alpha_nH_E(x)\varphi(x)-\ps{\nabla\varphi(x)}{\nu_E(x)}\right)\,d\mathcal{H}^{n-1}(x)\nonumber\\
&+\int_{\Sigma_{n-2}} c_{n-2}(x)\varphi(x)\,d\mathcal{H}^{n-2}(x)\biggr]+o(t^2),
\end{align}
where 
\begin{align}
&\alpha_n=2^{n-2} \pi^{\frac{n-1}{2}}(n-1),\\
&H_E(x)=\frac{1}{n-1}\sum_{j=1}^{n-1}k_j(x),\\
&\Sigma_{n-2}=\Bigl\{x\in E\ \big|\  {\rm dim}({\rm Nor}(E,x))=2\Bigr\},\\
&c_{n-2}(x)=\frac{1}{(4\pi)^{\frac{n}{2}}}\int_{{\rm Nor}(E,x)}d\mathcal{H}^{2}(y)\int_{{\rm Tan}(E,x)}\norm{y}\,e^{-\frac{\norm{w-y}^2}{4}}\,dw,\ \ x\in\Sigma_{n-2},
\end{align}
and the curvatures $k_i(x)$ are meant as in Remark \ref{casoregolare} since $\mathcal{F}E$ is $\mathcal{H}^{n-1}$-rectifiable of class $C^2$.

In particular, taking $\varphi\equiv 1$ we get 
{\small\begin{align}\label{sviluppoK}
K_{t^2}(E,E^c)&=\frac{t}{\sqrt{\pi}}P(E)+t^2\biggl[\alpha_n\int_{\mathcal{F}E}H_E(x)\,d\mathcal{H}^{n-1}(x)
+\int_{\Sigma_{n-2}} c_{n-2}(x)\,d\mathcal{H}^{n-2}(x)\biggr]+o(t^2),
\end{align}}
and
{\footnotesize\begin{align} \label{normaheat2}
&\norm{T_t\mathbbm{1}_E}_{L^2(\R^n)}=\sqrt{\frac{2t}{\pi}}P(E)+2t\biggl[\alpha_n\int_{\mathcal{F}E}H_E(x)
\,d\mathcal{H}^{n-1}(x)
+\int_{\Sigma_{n-2}} c_{n-2}(x)\,d\mathcal{H}^{n-2}(x)\biggr]+o(t),\\
&\norm{T_t\mathbbm{1}_E-\mathbbm{1}_E}_{L^1(\R^n)}=
2\sqrt{\frac{t}{\pi}}P(E)+2t\biggl[\alpha_n\int_{\mathcal{F}E}H_E(x)
\,d\mathcal{H}^{n-1}(x)
+\int_{\Sigma_{n-2}} c_{n-2}(x)\,d\mathcal{H}^{n-2}(x)\biggr]+o(t).\label{normaheat1}
\end{align}}
\end{theorem}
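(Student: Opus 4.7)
The plan is to analyze the representation from the preceding lemma term by term. Writing $f_E(t) = \sum_{k=0}^{n-1} I_k(t) + o(t^n)$, where $I_k$ carries the prefactor $t^{k+1}$, the first observation is that only $I_0$ and $I_1$ can contribute at orders $t$ and $t^2$. For $k\ge 2$ the prefactor is already $O(t^3)$; a dominated-convergence argument using the Gaussian decay $e^{-\norm{w-\beta\nu}^2/4}$ and the $L^1$-integrability of $\sigma_E^k$ (Remark \ref{sigmaInL1}) shows that the inner $(\beta,w,x,\nu)$-integrals are uniformly bounded as $t\to 0^+$, hence $I_k = O(t^{k+1}) = o(t^2)$.

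For $I_0$ I decompose the $(x,\nu)$-integration according to the stratification of Lemma \ref{decoE}: $\sigma_E^0$ vanishes wherever at least one generalized principal curvature equals $+\infty$, so the effective integration reduces to $\Sigma^{n-1}$, corresponding $\mathcal{H}^{n-1}$-a.e. to pairs $(x,\nu_E(x))$ with $x \in \mathcal{F}E$. Using Corollary \ref{blowup} together with the $C^2$-rectifiability of $\mathcal{F}E$ (Proposition \ref{summingup}(v) and Remark \ref{graficolocale}), I approximate $(E-x)/t$ by the tangent half-space ${\rm Tan}(E,x)$ with an $O(t)$ parabolic boundary correction governed by the local principal curvatures $k_i(x)$. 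Combined with the Taylor expansion $\varphi(x+tw) = \varphi(x) + t\ps{\nabla\varphi(x)}{w} + O(t^2\norm{w}^2)$ and a handful of one-dimensional Gaussian moment computations, this yields the leading term $\tfrac{t}{\sqrt{\pi}} \int_{\mathcal{F}E} \varphi\,d\mathcal{H}^{n-1}$ together with two $t^2$ contributions that feed into the $\alpha_n\int H_E\varphi$ and $-\int \ps{\nabla\varphi}{\nu_E}$ terms respectively.

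For $I_1$ the prefactor is already $t^2$ and I only need the leading-order limit of the inner integrals. I split ${\rm nor}^\star(E)$ into the regular part $\Sigma^{n-1}$ and the singular part $\Sigma^{n-2}$; strata $\Sigma^j$ with $j\le n-3$ contribute nothing because $\sigma_E^1$ vanishes there by the curvature-infinity counting of Lemma \ref{decoE}. On the regular part, the coarea identity from Theorem \ref{hug} (equivalently, the area formula of Remark \ref{casoregolare}) rewrites $\sigma_E^1\,d\mathcal{H}^{n-1}(x,\nu)$ as $\sum_i k_i(x)\,d\mathcal{H}^{n-1}(x) = (n-1) H_E(x)\,d\mathcal{H}^{n-1}(x)$, completing the $\alpha_n\int H_E\varphi$ contribution. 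On the singular part, Theorem \ref{hug} with $k=n-2$ factors the $(x,\nu)$-integration as $d\mathcal{H}^1(\nu)\otimes d\mathcal{H}^{n-2}(x)$ over the $1$-dimensional arc ${\rm Nor}(E,x)\cap\Sn$ and the $(n-2)$-dimensional set $\Sigma_{n-2}$; combining this with the polar identity $\beta\,d\beta\,d\mathcal{H}^1(\nu)=d\mathcal{H}^2(y)$ on the $2$-dimensional cone ${\rm Nor}(E,x)$ and the blow-up $(E-x)/t\to{\rm Tan}(E,x)$ yields exactly the $c_{n-2}(x)\varphi(x)\,d\mathcal{H}^{n-2}(x)$ contribution.

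The main obstacle I expect is making the asymptotic expansion of $(E-x)/t$ sufficiently uniform in $x \in \mathcal{F}E$ to extract a clean $O(t^2)$ remainder from $I_0$ after integration against the Gaussian factor. Pointwise blow-up (Corollary \ref{blowup}) and the local $C^2$-graph structure (Remark \ref{graficolocale}) provide the required expansion at $\mathcal{H}^{n-1}$-a.e.\ point, but upgrading to an integrated statement requires controlling the Taylor remainder of the local graph function on a suitable exhausting family of subsets of $\mathcal{F}E$ (on which the $C^2$ charts admit uniform bounds) together with Gaussian-dominated remainder estimates away from these subsets; this is the only place where $\mathcal{F}E$ being merely $\mathcal{H}^{n-1}$-rectifiable of class $C^2$, rather than a global $C^2$ submanifold, creates a technical difficulty. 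Once \eqref{sviluppops} is established, formulas \eqref{sviluppoK}, \eqref{normaheat2} and \eqref{normaheat1} follow immediately by taking $\varphi\equiv 1$ and invoking \eqref{norma2} and \eqref{norma1}.
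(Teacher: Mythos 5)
Your architecture follows the paper's proof quite closely in most respects: isolating the $k=0$ and $k=1$ terms of \eqref{formulabella}, discarding $k\ge 2$ as $o(t^2)$, extracting the first-order term from the half-space limit, the $-\int\ps{\nabla\varphi}{\nu_E}$ term from the Taylor expansion of $\varphi$, and the $c_{n-2}$ term from Theorem \ref{hug} on $\Sigma_{n-2}$; your closing discussion of the uniformity of the local $C^2$-graph expansion also flags the genuinely delicate analytic point. The gap is in your accounting of the mean-curvature term, and it sits precisely where you diverge from the paper. You generate $\int_{\mathcal{F}E}H_E\varphi\,d\mathcal{H}^{n-1}$ twice: once as the parabolic correction inside $I_0$ (the symmetric difference between $\frac{E-x}{t}$ and ${\rm Tan}(E,x)$, which the paper encodes in $F(\beta,x)$ and evaluates through the graph function $u$, producing a multiple of $\int_{\mathcal{F}E}{\rm tr}\,D^2u(x)\,\varphi(x)\,d\mathcal{H}^{n-1}(x)$), and once as the regular-stratum part of $I_1$, where $\sigma^1_E\,d\mathcal{H}^{n-1}(x,\nu)$ pushes forward to $\sum_i k_i(x)\,d\mathcal{H}^{n-1}(x)$ on $\mathcal{F}E$. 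If you actually compute both limits you find they are each $\pm\frac{n-1}{2}\int_{\mathcal{F}E}H_E\varphi\,d\mathcal{H}^{n-1}$ with opposite signs: the eigenvalues of $D^2u$ in the sub-graph convention of Remark \ref{graficolocale} are the negatives of the positive-reach curvatures entering $\sigma^1_E$. Test this on $B_R\subseteq\R^2$, where the $I_0$ correction is $-\pi t^2$ and the regular part of $I_1$ is $+\pi t^2$. So the two contributions you propose to combine cancel rather than ``complete'' each other --- as they must when $\varphi\equiv1$, by the symmetry $K_s(E,E^c)=K_s(E^c,E)$ --- and neither of them alone carries the coefficient $\alpha_n=2^{n-2}\pi^{\frac{n-1}{2}}(n-1)$.

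The paper's bookkeeping is different at exactly this juncture: it invokes Lemma \ref{decoE}(vi) to restrict the $\sigma^1_E$-integral to $E^{n-2}$, so that after Theorem \ref{hug} the entire $k=1$ term collapses to the $c_{n-2}$ integral over $\Sigma_{n-2}$ and contributes nothing on $\mathcal{F}E$, while the whole $\alpha_n\int H_E\varphi$ term is attributed to the $I_0$ correction $q_1$. You instead retain the regular-stratum part of $I_1$, correctly observing that the curvature-infinity counting does not kill it (on $\Sigma^{n-1}$ all curvatures are finite and $\sigma^1_E$ is generically nonzero there). You cannot have it both ways: before the proof can be written you must decide whether the regular part of $I_1$ survives, and reconcile its sign and constant with the $I_0$ correction; as it stands the proposed decomposition of the $t^2$ coefficient does not add up to \eqref{sviluppops}.
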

\begin{proof}
Given $t>0$, by \eqref{formulabella} and Lemma \ref{decoE}(vi) we have
\begin{align*} 
\frac{f_E(t)}{t}=\frac{1}{(4\pi)^{\frac{n}{2}}}\int_0^{\frac{r}{t}}d\beta\int_{E^{n-1}}d\mathcal{H}^{n-1}(x,\nu)\int_{\frac{E-x}{t}}\sigma^0_E(x,\nu)e^{-\frac{\norm{w-\beta\nu}^2}{4}}\varphi(x+tw)\,dw+o(1).
\end{align*}
Since 
\begin{align*}
\abs{\sigma^0_E(x,\nu) e^{-\frac{\norm{w-\beta\nu}^2}{4}}\varphi(x+tw)}\leq \norm{\varphi}_\infty \abs{\sigma^0_E(x,\nu) }e^{-\frac{\norm{w-\beta\nu}^2}{4}};
\end{align*}
thanks to Remark~\ref{sigmaInL1} and  the Dominated Convergence Theorem we get
\begin{align}\label{persecondostep}
\lim_{t\to0^+}\frac{f_E(t)}{t}&=\frac{1}{(4\pi)^{\frac{n}{2}}}\int_0^{+\infty}d\beta\int_{E^{n-1}}d\mathcal{H}^{n-1}(x,\nu)\int_{{\rm Tan}(E,x)}\sigma^0_E(x,\nu)e^{-\frac{\norm{w-\beta\nu}^2}{4}}\varphi(x)\,dw.
\end{align}
 For every $x\in\mathcal{F}E$ we denote by $\nu_E(x)$ the generalized outer normal at $x$ to $E$ given by \eqref{gen_out_vect} and 
 \begin{equation}\label{normale}
 {\rm Nor}(E,x)=\{t\nu_E(x),\ t\geq 0\}.
 \end{equation}
  Hence 
  \begin{equation}\label{tangente}
  {\rm Tan}(E,x)=\left\{w'-\alpha\nu_E(x)\ |\ \alpha\geq 0,\ w'\in\nu_E(x)^\perp\right\}
  \end{equation}
   and by Theorem \ref{hug} and Proposition \ref{summingup}(iii) we get
 \begin{align}
&\lim_{t\to0^+}\frac{f_E(t)}{t}=\frac{1}{(4\pi)^{\frac{n}{2}}}\int_0^{+\infty}d\beta\int_{\mathcal{F}E}d\mathcal{H}^{n-1}(x)\int_{{\rm Nor}(E,x)\cap\Sn}d\mathcal{H}^{0}(\nu)\int_{{\rm Tan}(E,x)}e^{-\frac{\norm{w-\beta\nu}^2}{4}}\varphi(x)\,dw\nonumber\\
&=\frac{1}{(4\pi)^{\frac{n}{2}}}\int_0^{+\infty}d\beta\int_{\mathcal{F}E}d\mathcal{H}^{n-1}(x)\int_{{\rm Nor}(E,x)\cap\Sn}d\mathcal{H}^{0}(\nu)\int_{\nu_E(x)^\perp}d\alpha\int_0^{+\infty}e^{-\frac{\norm{w'-(\alpha+\beta)\nu_E(x)}^2}{4}}\varphi(x)\,dw'\nonumber\\
&=\frac{1}{(4\pi)^{\frac{n}{2}}}\int_0^{+\infty}\int_0^{+\infty}e^{-\frac{(\alpha+\beta)^2}{4}}\,d\alpha\,d\beta\int_{\R^{n-1}}e^{-\frac{\norm{w'}^2}{4}}\,dw'\int_{\mathcal{F}E}\varphi(x)d\mathcal{H}^{n-1}(x).\label{calcoliutili}
\end{align}
By standard computations we have
\begin{align}
&\int_0^{+\infty}\int_0^{+\infty}e^{-\frac{(\alpha+\beta)^2}{4}}\,d\alpha\,d\beta=2,\label{int1}\\
&\int_{\R^{n-1}}e^{-\frac{\norm{w'}^2}{4}}\,dw'=(4\pi)^{\frac{n-1}{2}},\label{int2}
\end{align}
and 
\begin{align}\label{firstorder}
\lim_{t\to0^+}\frac{f_E(t)}{t}=\frac{1}{\sqrt{\pi}}\int_{\mathcal{F}E}\varphi(x)d\mathcal{H}^{n-1}(x)=:f'_E(0^+).
\end{align}
We compute now
\begin{align*}
\frac{f_E(t)-tf'_E(0^+)}{t^2}=&\frac{1}{(4\pi)^{\frac{n}{2}}}\int_0^{\frac{r}{t}}d\beta\int_{E^{n-2}}d\mathcal{H}^{n-1}(x,\nu)\int_{\frac{E-x}{t}}\beta\sigma^1_E(x,\nu)e^{-\frac{\norm{w-\beta\nu}^2}{4}}\varphi(x+tw)\,dw\\
&+\frac{1}{t}\biggl[\frac{1}{(4\pi)^{\frac{n}{2}}}\int_0^{\frac{r}{t}}d\beta\int_{E^{n-1}}d\mathcal{H}^{n-1}(x,\nu)\int_{\frac{E-x}{t}}\sigma^0_E(x,\nu)\,e^{-\frac{\norm{w-\beta\nu}^2}{4}}\varphi(x+tw)\,dw\\
&-\frac{1}{\sqrt{\pi}}\int_{\mathcal{F}E}\varphi(x)d\mathcal{H}^{n-1}(x)\biggr]+o(1)=:p(t)+\frac{q(t)}{t}+o(1).
\end{align*}

Since 
\begin{align*}
\abs{\beta\sigma^1_E(x,\nu) e^{-\frac{\norm{w-\beta\nu}^2}{4}}\varphi(x+tw)}\leq \norm{\varphi}_\infty \abs{\sigma^1_E(x,\nu) }\beta e^{-\frac{\norm{w-\beta\nu}^2}{4}};
\end{align*}
thanks to Remark~\ref{sigmaInL1}, we can use the 
Dominated Convergence Theorem to obtain
\begin{align*}
\lim_{t\to0^+}p(t)&=\frac{1}{(4\pi)^{\frac{n}{2}}}\int_0^{+\infty}d\beta\int_{E^{n-2}}d\mathcal{H}^{n-1}(x,\nu)\int_{{\rm Tan}(E,x)}\beta\sigma^1_E(x,\nu)\,e^{-\frac{\norm{w-\beta\nu}^2}{4}}\varphi(x)\,dw\\
&=\frac{1}{(4\pi)^{\frac{n}{2}}}\int_0^{+\infty}d\beta\int_{\Sigma_{n-2}}d\mathcal{H}^{n-2}(x)\int_{{\rm Nor}(E,x)\cap\Sn}d\mathcal{H}^{1}(\nu)\int_{{\rm Tan}(E,x)}\beta\,e^{-\frac{\norm{w-\beta\nu}^2}{4}}\varphi(x)\,dw,
\end{align*}
where we have exploited Theorem \ref{hug} and Proposition \ref{summingup}(iii).
Setting $y=\beta\nu$, by to the Coarea Formula we get

\begin{align}\label{limitep}
\lim_{t\to0^+}p(t)&=\frac{1}{(4\pi)^{\frac{n}{2}}}\int_{\Sigma_{n-2}}d\mathcal{H}^{n-2}(x)\int_{{\rm Nor}(E,x)}d\mathcal{H}^{2}(y)\int_{{\rm Tan}(E,x)}\norm{y}\,e^{-\frac{\norm{w-y}^2}{4}}\varphi(x)\,dw\\
&=\int_{\Sigma_{n-2}}\varphi(x) c_{n-2}(x)d\mathcal{H}^{n-2}(x).
\end{align}

Since 
\[
\varphi(x+tw)=\varphi(x)+t\ps{\nabla\varphi(x)}{w}+o(t),
\] 
we have
\begin{align*}
q(t)&=\frac{1}{(4\pi)^{\frac{n}{2}}}\int_0^{\frac{r}{t}}\int_{E^{n-1}}\sigma^0_E(x,\nu)\,\int_{\frac{E-x}{t}}e^{-\frac{\norm{w-\beta\nu}^2}{4}}\varphi(x+tw)\,dw\,d\mathcal{H}^{n-1}(x,\nu)\,d\beta-f'_E(0^+)\\
&=\frac{1}{(4\pi)^{\frac{n}{2}}}\int_0^{\frac{r}{t}}\int_{E^{n-1}}\sigma^0_E(x,\nu)\int_{\frac{E-x}{t}}e^{-\frac{\norm{w-\beta\nu}^2}{4}}\varphi(x)\,dw\,d\mathcal{H}^{n-1}(x,\nu)\,d\beta\\
&+\frac{t}{(4\pi)^{\frac{n}{2}}}\int_0^{\frac{r}{t}}\int_{E^{n-1}}\sigma^0_E(x,\nu)\,\int_{\frac{E-x}{t}}e^{-\frac{\norm{w-\beta\nu}^2}{4}}\ps{\nabla\varphi(x)}{w}\,dw\,d\mathcal{H}^{n-1}(x,\nu)\,d\beta-f'_E(0^+)+o(t).
\end{align*}
We set
\begin{align*}
q_1(t):=\frac{1}{(4\pi)^{\frac{n}{2}}}\int_0^{\frac{r}{t}}\int_{E^{n-1}}\sigma^0_E(x,\nu)\int_{\frac{E-x}{t}}e^{-\frac{\norm{w-\beta\nu}^2}{4}}\varphi(x)\,dw\,d\mathcal{H}^{n-1}(x,\nu)\,d\beta-f'_E(0^+),\\
q_2(t):=\frac{t}{(4\pi)^{\frac{n}{2}}}\int_0^{\frac{r}{t}}\int_{E^{n-1}}\sigma^0_E(x,\nu)\,\int_{\frac{E-x}{t}}e^{-\frac{\norm{w-\beta\nu}^2}{4}}\ps{\nabla\varphi(x)}{w}\,dw\,d\mathcal{H}^{n-1}(x,\nu)\,d\beta.
\end{align*}
We compute now the Taylor expansion of $g_1$. 
Since 
\[
\int_{\frac{r}{t}}^{+\infty}\int_0^{+\infty}
e^{-\frac{(\alpha+\beta)^2}{4}}\,d\alpha\,d\beta=
o(t^\gamma),\quad \mbox{for all}\ t^\gamma,
\] 
\eqref{calcoliutili} implies that for all $\gamma>0$ we have
\begin{equation}\label{primoordineapprossimato}
f'_E(0^+)=\frac{1}{(4\pi)^{\frac{n}{2}}}\int_0^{\frac{r}{t}}\int_{E^{n-1}}\sigma_E^0(x,\nu)\int_{{\rm Tan}(E,x)}e^{-\frac{\norm{w-\beta\nu}^2}{4}}\varphi(x)\,dw\,d\mathcal{H}^{n-1}(x,\nu)\,d\beta+o(t^\gamma)
\end{equation}
and 
\begin{align}\label{g1bis}
q_1(t)=\frac{1}{(4\pi)^{\frac{n}{2}}}\int_0^{\frac{r}{t}}\int_{\mathcal{F}E}F(\beta,x)\varphi(x)\,d\mathcal{H}^{n-1}(x)\,d\beta+ o(t),
\end{align}
where for all $\ x\in\mathcal{F}E$ and $0\leq\beta\leq\frac{r}{t}$
\begin{align}\label{Fbetax}
F(\beta,x)=\int_{\frac{E-x}{t}\sm{\rm Tan}(E,x)}e^{-\frac{\norm{w-\beta\nu_E(x)}^2}{4}}\,dw-\int_{{\rm Tan}(E,x)\sm\frac{E-x}{t}}e^{-\frac{\norm{w-\beta\nu_E(x)}^2}{4}}\,dw.
\end{align}
Now for all $\rho>0$ and $x\in\mathcal{F}(E)$ we denote by $Q^n_\rho(x)$ the $n$-dimensional cube with side $\rho$ and we set $Q^{n-1}_\rho(x):=Q^n_\rho(x)\cap(x+\nu_E(x)^\perp)$. By Proposition \ref{summingup}(v) $\mathcal{F}E$ is locally the graph of a $C^2$-mapping (see Figure \ref{graficoC2}). So for every $x\in\mathcal{F}E$ there exist $\rho>0$ such that $E\cap Q_\rho(x)$ is the sub-graph of a $C^2$ mapping $u: Q^{n-1}_\rho(x)\longrightarrow\R$. Without loss of generality we can assume $u(x)=0$ and $\nabla u(x)=0$ and we have
\begin{align*}
E\cap Q_\rho(x)=\Big\{w'+h\nu_E(x)\ \bigl|\  -\rho\leq h\leq u(w'),\ w'\in Q^{n-1}_\rho(x)\Big\}.
\end{align*}

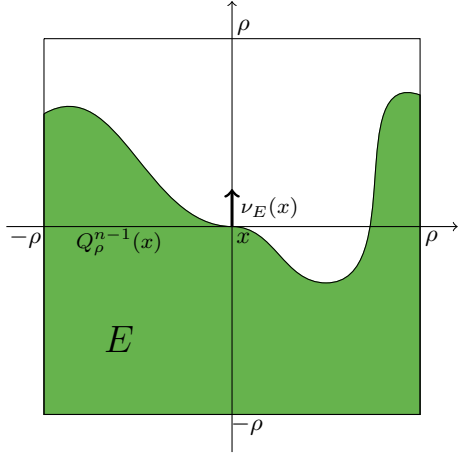
\begin{figure}[htbp]\label{graficoC2}
  \centering
  \mbox{%
    \begin{minipage}{.5\textwidth}
\begin{tikzpicture}[scale=0.5, font=\small]
 \tikzset{axes/.style={}}
\begin{scope}[style=axes]

 \filldraw[teal! 60! yellow](-5,3) to [out=30,in=180]  (0,0) to [out=0, in=180] (2.5,-1.5)to [out=0, in=160] (5,3.5)to [out=-90, in=90] (5,-5)to [out=180, in=0] (-5,-5)to [out=90, in=-90] (-5,3); 
 \draw(-5,3) to [out=30,in=180]  (0,0) to [out=0, in=180] (2.5,-1.5)to [out=0, in=160] (5,3.5)to [out=-90, in=90] (5,-5)to [out=180, in=0] (-5,-5)to [out=90, in=-90] (-5,3); 
 \draw[->] (-6,0) -- (6,0); \node at (-3,-.4) {{\footnotesize$Q^{n-1}_\rho(x)$}};
\draw[->] (0,-6)-- (0,6); \node at (1,.5) {{\footnotesize$\nu_E(x)$}};
\draw[->, line width=0.4mm] (0,0)--(0,1);
\draw (-5,5)--(-5,-5)--(5,-5)--(5,5)--(-5,5);
  \node at (-5.5,-.3) {$-\rho$};
 \node at (.4,-5.3) {$-\rho$};
  \node at (5.3,-.3) {$\rho$};
\node at (.3,5.3) {$\rho$};
\node at (-3,-3) {{\Large $E$}};
\node at (.3,-.3) {$x$};
\end{scope}
\end{tikzpicture}

    \end{minipage}
       \begin{minipage}[r]{.35\textwidth}
      \caption{{\small In this picture we  represent how the boundary of $E$ can be seen locally as the graph of a $C^2$ mapping.}}
   \end{minipage}
   }
\end{figure}

Setting  $\displaystyle{u_{x,t}\left(\frac{w'-x}{t}\right)=\frac{u(w')}{t}}$, we have $u_{x,t}(0)=0$, $\nabla u_{x,t}(0)=0$ and 
\begin{align*}
\frac{E-x}{t}\cap Q_{\frac{\rho}{t}}(0)=\left\{\frac{w'+h\nu_E(x)-x}{t}\ \biggl|\  -\frac{\rho}{t}\leq \frac{h}{t}\leq u_{x,t}\left(\frac{w'-x}{t}\right),\ \frac{w'-x}{t}\in Q^{n-1}_{\frac{\rho}{t}}(0)\right\}.
\end{align*}
Hence setting $\displaystyle{z'=\frac{w'-x}{t}}$ and $\displaystyle{\alpha=\frac{h}{t}}$, we have $w=z'+\alpha\nu_E(x)$ and 
\begin{align*}
F(\beta,x)&=\int_{Q^{n-1}_{\frac{\rho}{t}}(0)}\int_0^{u_{x,t}(z')}e^{-\frac{\norm{z'+(\alpha-\beta)\nu_E(x)}^2}{4}}\,d\alpha dz'\\
&=\int_{Q^{n-1}_{\frac{\rho}{t}}(0)}e^{-\frac{\norm{z'}^2}{4}}\int_0^{u_{x,t}(z')}e^{-\frac{(\alpha-\beta)^2}{4}}\,d\alpha dz'. 
\end{align*}

We set now 
\[
G(t)=\int_0^{u_{x,t}(z')}e^{-\frac{(\alpha-\beta)^2}{4}}\,d\alpha
\]
and clearly
\begin{align*}
G'(t)=e^{-\frac{(u_{x,t}(z')-\beta)^2}{4}} \frac{d}{dt}(u_{x,t}(z')).
\end{align*}
By the Taylor Formula we have
\begin{align*}
u_{x,t}(z')= \frac{u(x+tz')}{t}=t\int_0^1s\,ds\int_0^1\ps{D^2u(x+\sigma st) z'}{z'}\,d\sigma
\end{align*}

Hence 
\begin{align*}
\lim_{t\to0^+}\frac{G(t)}{t}=\lim_{t\to0^+}G'(t)= e^{-\frac{\beta^2}{4}}\int_0^1s\,ds\int_0^1\ps{D^2u(x)z'}{z'}\,d\sigma=\frac{1}{2}\ps{D^2u(x)z'}{z'}e^{-\frac{\beta^2}{4}}.
\end{align*}

Since
\begin{align*}
\abs{\sigma^0_E(x,\nu) e^{-\frac{\norm{w-\beta\nu}^2}{4}}\varphi(x)}\leq \norm{\varphi}_\infty \abs{\sigma^0_E(x,\nu) }e^{-\frac{\norm{w-\beta\nu}^2}{4}},
\end{align*}
by Remark~\ref{sigmaInL1} and the Dominated convergence Theorem 
\begin{align*}
\lim_{t\to0^+}\frac{q_1(t)}{t}&=\frac{(4\pi)^{\frac{n-1}{2}}}{(4\pi)^{\frac{n}{2}}}\frac{1}{2}\int_{\mathcal{F}E}\int_{\R^{n-1}}e^{-\frac{\norm{z'}^2}{4}}\ps{D^2u(x)z'}{z'}\varphi(x)\,dz'd\mathcal{H}^{n-1}(x)\int_0^{+\infty}e^{-\frac{\beta^2}{4}}\,d\beta\\
&=\frac{1}{4}\int_{\mathcal{F}E}\int_{\R^{n-1}}e^{-\frac{\norm{z'}^2}{4}}\ps{D^2u(x)z'}{z'}\,\varphi(x)\,dz'd\mathcal{H}^{n-1}(x).
\end{align*}

By Remark \ref{graficolocale} and Remark \ref{casoregolare}, if we choose a orthonormal basis $\{v_j: j=1,...,n-1\}$ of eigenvectors of $D^2u(x)$ we have 
\begin{align}
&\ps{D^2u(x)z'}{z'}=\sum_{j=1}^{n-1}(z'_j)^2k_j(x) ,\\
&\int_{\R^{n-1}}(z'_j)^2 e^{-\frac{\norm{z'}^2}{4}}dz'= 2^n \pi^{\frac{n-1}{2}}.
\end{align}
Hence
\begin{align}\label{limiteg1}
\lim_{t\to0^+}\frac{q_1(t)}{t}&=2^{n-2} \pi^{\frac{n-1}{2}}(n-1)\int_{\mathcal{F}E}H_E(x)\varphi(x)\,d\mathcal{H}^{n-1}(x).
\end{align}

We compute now the Taylor expansion of $g_2$. Since
\begin{equation*}
\abs{\sigma^0_E(x,\nu)e^{-\frac{\norm{w-\beta\nu}^2}{4}}\ps{\nabla\varphi(x)}{w}}\leq \abs{\sigma^0_E(x,\nu)}\norm{\nabla\varphi}_\infty\norm{w}e^{-\frac{\norm{w-\beta\nu}^2}{4}},
\end{equation*}
by Remark~\ref{sigmaInL1} and
the Dominated convergence Theorem and Theorem \ref{hug} we have
\begin{align*}
&\lim_{t\to0^+}\frac{q_2(t)}{t}=\frac{1}{(4\pi)^{\frac{n}{2}}}\int_0^{+\infty}\int_{E^{n-1}}\sigma^0_E(x,\nu)\,\int_{{\rm Tan}(E,x)}e^{-\frac{\norm{w-\beta\nu}^2}{4}}\ps{\nabla\varphi(x)}{w}\,dw\,d\mathcal{H}^{n-1}(x,\nu)\,d\beta\\
&=\frac{1}{(4\pi)^{\frac{n}{2}}}\int_0^{+\infty}\int_{\mathcal{F}E}\int_{{\rm Nor}_{E}(x)\cap\Sn}\int_{{\rm Tan}(E,x)}e^{-\frac{\norm{w-\beta\nu}^2}{4}}\ps{\nabla\varphi(x)}{w}\,dw\,d\mathcal{H}^{0}(\nu)\,d\mathcal{H}^{n-1}(x)\,d\beta.
\end{align*}
Recalling \eqref{normale} and \eqref{tangente}, we get
\begin{align*}
&\frac{1}{(4\pi)^{\frac{n}{2}}}\int_0^{+\infty}\int_{\mathcal{F}E}\int_{{\rm Nor}_{E}(x)\cap\Sn}\int_{{\rm Tan}(E,x)}e^{-\frac{\norm{w-\beta\nu}^2}{4}}\ps{\nabla\varphi(x)}{w}\,dw\,d\mathcal{H}^{0}(\nu)\,d\mathcal{H}^{n-1}(x)\,d\beta\\
&=\frac{1}{(4\pi)^{\frac{n}{2}}}\int_0^{+\infty}\int_{\mathcal{F}E}\int_{{\rm Nor}(E,x)\cap\Sn}\int_{\R^{n-1}}\int_0^{+\infty}e^{-\frac{\norm{w'-(\alpha+\beta)\nu_E(x)}^2}{4}}\ps{\nabla\varphi(x)}{w'-\alpha\nu_E(x)}\\
&\,dw'\,d\alpha\,d\mathcal{H}^{0}(\nu)\,d\mathcal{H}^{n-1}(x)\,d\beta\\
&=\frac{1}{(4\pi)^{\frac{n}{2}}}\int_{\mathcal{F}E}\biggl[\int_0^{+\infty}\int_0^{+\infty}e^{-\frac{(\alpha+\beta)^2}{4}}d\alpha\,d\beta\ps{\nabla\varphi(x)}{\int_{\R^{n-1}}e^{-\frac{\norm{w'}^2}{4}}w' dw'}\\
&-\int_{\R^{n-1}}e^{-\frac{\norm{w'}^2}{4}}\,dw'\int_0^{+\infty}\int_0^{+\infty}\alpha e^{-\frac{(\alpha+\beta)^2}{4}}d\alpha\,d\beta\ps{\nabla\varphi(x)}{\nu_E(x)}\biggr]d\mathcal{H}^{n-1}(x).
\end{align*}
By standard computation we have
\begin{align}
&\int_{\R^{n-1}}e^{-\frac{\norm{w'}^2}{4}}w' dw'=0,\\
&\int_0^{+\infty}\int_0^{+\infty}\alpha e^{-\frac{(\alpha+\beta)^2}{4}}d\alpha\,d\beta=\sqrt{4\pi},
\end{align}
and recalling \eqref{int1} and \eqref{int2} we have
\begin{align}\label{limiteg2}
&\lim_{t\to0^+}\frac{q_2(t)}{t}=-\int_{\mathcal{F}E}\ps{\nabla\varphi(x)}{\nu_E(x)}d\mathcal{H}^{n-1}(x).
\end{align}
By \eqref{firstorder}, \eqref{limitep}, \eqref{limiteg1} and \eqref{limiteg2} the statement follows.

Finally, \eqref{sviluppoK} follows by \eqref{sviluppops} with $\varphi\equiv1$ and \eqref{normaheat2},\eqref{normaheat2} follow by \eqref{norma2}, \eqref{norma1} and \eqref{sviluppoK}.
\end{proof}

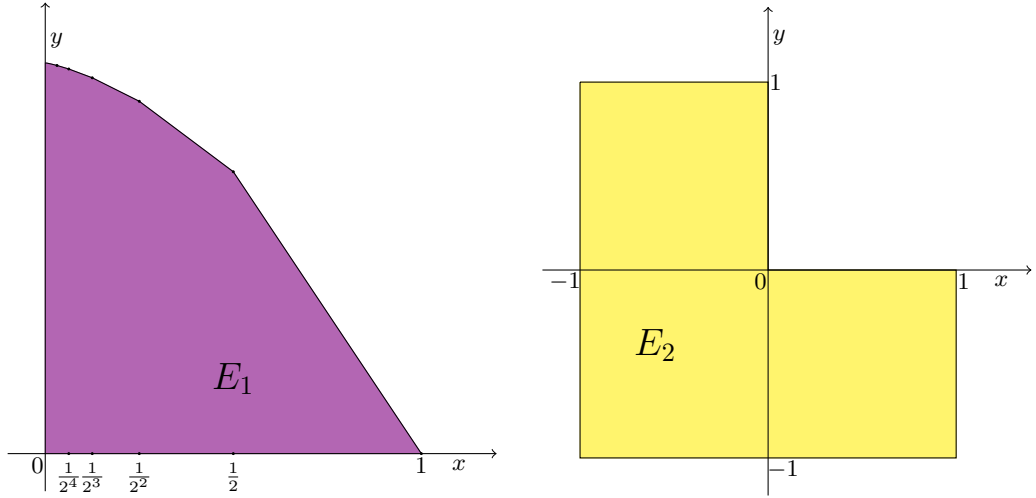
\begin{figure}[htbp]
  \centering
  \mbox{%
    \begin{minipage}{.45\textwidth}
\begin{tikzpicture}[scale=0.5, font=\small]
 \tikzset{axes/.style={}}
\begin{scope}[style=axes]

\filldraw[violet! 60! white](10,0) --(10/2,30*2^-2)--(10*2^-2,30*5/16)--(10*2^-3,30*1/3)--(10*2^-4,30*131/384)--(10*2^-5,1983/192)--(10*2^-6,3*1327/384)--(10*2^-7,3*1163/336)--(10*2^-8,3*3.464)--(0,3*3.464)--(0,0)--(10,0);
\draw(10,0) --(10/2,30*2^-2)--(10*2^-2,30*5/16)--(10*2^-3,30*1/3)--(10*2^-4,30*131/384)--(10*2^-5,1983/192)--(10*2^-6,3*1327/384)--(10*2^-7,3*1163/336)--(10*2^-8,3*3.464)--(0,3*3.464);
 \filldraw (10,0) circle [radius=0.7pt];
 \filldraw (10/2,30*2^-2) circle [radius=0.7pt];
  \filldraw (10*2^-2,30*5/16) circle [radius=0.7pt];
  \filldraw (10*2^-3,30*1/3) circle [radius=0.7pt];
  \filldraw (10*2^-4,30*131/384) circle [radius=0.7pt];
  \filldraw (10*2^-5,1983/192) circle [radius=0.7pt];
    \filldraw (5,0) circle [radius=0.7pt] node[below] {$\frac{1}{2}$};
  \filldraw (2.5,0) circle [radius=0.7pt] node[below] {$\frac{1}{2^2}$};
;
  \filldraw (5/4,0) circle [radius=0.7pt] node[below] {$\frac{1}{2^3}$};
;
  \filldraw (5/8,0) circle [radius=0.7pt] node[below] {$\frac{1}{2^4}$};
;

 \draw[->] (-1,0) -- (12,0); \node at (11,-.3) {$x$};
\draw[->] (0,-1)-- (0,12); \node at (.3,11) {$y$};
\node at (-.2,-.3) {$0$};
 \node at (10,-.3) {$1$};
   \node at (5,2) {{\Large$E_1$}};

\end{scope}
\end{tikzpicture}

    \end{minipage}
   \begin{minipage}[r]{.35\textwidth}
\begin{tikzpicture}[scale=0.5, font=\small]
 \tikzset{axes/.style={}}
 
\begin{scope}[style=axes]
\filldraw[yellow! 70! white] (-5,5)--(-5,-5)--(5,-5)--(5,0)--(0,0)--(0,5)--(-5,5);
\draw (-5,5)--(-5,-5)--(5,-5)--(5,0)--(0,0)--(0,5)--(-5,5);
 \draw[->] (-6,0) -- (7,0); 
\draw[->] (0,-6)-- (0,7); 
 \end{scope}
  \node at (-3,-2) {{\Large$E_2$}};
 \node at (5.2,-.3) {$1$};
  \node at (-.2,-.3) {$0$};
 \node at (.2,5) {$1$};
 \node at (-5.4,-.3) {$-1$};
 \node at (.4,-5.3) {$-1$};
 \node at (6.2,-.3) {$x$};
\node at (.3,6.2) {$y$};
\end{tikzpicture}
   \end{minipage}}
   \caption{{\small Representation of the sets $E_1$ and $E_2$.}}
\end{figure}

We end this section with some remarks.
\begin{rmk}\label{remark_finale} In \cite{ang_mas_mir_2013}, the authors obtained a third order Taylor expansion of $f_E(t)$ assuming that $E$ has skeleton $C^{1,1}$. We make a comparison with this previous result.
\begin{enumerate}[{\rm (i)}]
\item There exist sets with positive reach that do not have skeleton $C^{1,1}$. As an example in the set $E_1=\{(x,y)\in\R^2\ |\ x\in[0,1]\ \mbox{and}\ y\in[0,u(x)]\}$ where $u:[0,1]\longrightarrow\R$ is a continuous given by 

\[
u(x)=3\sum_{n=0}^{+\infty}u_n(x)\mathbbm{1}_{\left[\frac{1}{2^{n+1}},\frac{1}{2^n}\right]}(x)
\]
and $u_n$ are affine functions such that 
\begin{align*}
&u(1)=0,\\
&u\left(\frac{1}{2^n}\right)=\sum_{i=1}^n\frac{1}{i 2^{i+1}}.
\end{align*}
\item There exist sets with $C^{1,1}$ skeleton by with zero reach. Take for instance the set $E_2=[-1,0]\times[-1,1]\cup[0,1]\times[-1,0]$.
So the set $f_{E_2}(t)$ admits a third order Taylor expansion even if ${\rm reach}(E_2)=0$.
\item The term of \eqref{sviluppops} containing $c_{n-2}$ is an implicit version of the term $\Theta_1$ defined in \cite[Lemma 4.6]{ang_mas_mir_2013} where the tangent and the normal cones were explicitly given by the positive linear combination of explicit vectors.
\end{enumerate}
\end{rmk}


\end{document}